\title{Complete determination of the number of Galois points for a smooth plane curve}
\author{Satoru Fukasawa}
\subjclass[2000]{14H50, 12F10, 14H05}
\keywords{Galois point, plane curve, positive characteristic, Galois group}
\address{Department of Mathematical Sciences,  
Faculty of Science, Yamagata University, 
Kojirakawa-machi 1-4-12, Yamagata 990-8560, Japan.}
\email{s.fukasawa@sci.kj.yamagata-u.ac.jp} 
\newtheorem{theorem}{Theorem}
\newtheorem{proposition}{Proposition}
\newtheorem{fact}{Fact}
\newtheorem{remark}{Remark}
\newtheorem{lemma}{Lemma} 
\newtheorem{problem}{}
\begin{document}
\begin{abstract} 
Let $C$ be a smooth plane curve. 
A point $P$ in the projective plane is said to be Galois with respect to $C$ if the function field extension induced from the point projection from $P$ is Galois. 
We denote by $\delta(C)$ (resp. $\delta'(C)$) the number of Galois points contained in $C$ (resp. in $\mathbb P^2 \setminus C$). 
In this article, we determine the numbers $\delta(C)$ and $\delta'(C)$ in any remaining open cases.   
Summarizing results obtained by now, we will have a complete classification theorem of smooth plane curves by the number $\delta(C)$ or $\delta'(C)$.   
In particular, we give new characterizations of Fermat curve and Klein quartic curve by the number $\delta'(C)$. 
\end{abstract}
\maketitle
\section{Introduction}  
Let the base field $K$ be an algebraically closed field of characteristic $p \ge 0$ and let $C \subset \mathbb P^2$ be a smooth plane curve of degree $d \ge 4$. 
In 1996, H. Yoshihara introduced the notion of {\it Galois point} (see \cite{miura-yoshihara, yoshihara} or survey paper \cite{fukasawa3}). 
If the function field extension $K(C)/K(\mathbb P^1)$, induced from the projection $\pi_P:C \rightarrow \mathbb P^1$ from a point $P \in \mathbb P^2$, is Galois, then the point $P$ is said to be Galois with respect to $C$. 
When a Galois point $P$ is contained in $C$ (resp. $\mathbb P^2 \setminus C$), we call $P$ an inner (resp. outer) Galois point. 
We denote by $\delta(C)$ (resp. $\delta'(C)$) the number of inner (resp. outer) Galois points for $C$. 
It is remarkable that many classification results of algebraic varieties have been given in the theory of Galois point.

Yoshihara determined $\delta(C)$ and $\delta'(C)$ in characteristic $p=0$ (\cite{miura-yoshihara, yoshihara}). 
In characteristic $p>0$, M. Homma \cite{homma3} settled $\delta(H)$ and $\delta'(H)$ for a Fermat curve $H$ of degree $p^e+1$.  
Recently, the present author determined $\delta(C)$ when $p>2$ or $d-1$ is not a power of $2$ (\cite{fukasawa1, fukasawa2}), and $\delta'(C)$ when $d$ is not divisible by $p$, $d=p$, or $d=2^e$ in $p=2$ (\cite{fukasawa1, fukasawa2, fukasawa5}). 
The following problems remain open (\cite[Part III, Problem]{fukasawa2}, \cite[Problem 2]{fukasawa3}). 

\begin{problem} 
\begin{itemize}
\item[(1)] Let $p=2$ and let $e \ge 2$. 
Find and classify smooth plane curves of degree $d=2^e+1$ with $\delta(C)=d$. 
\item[(2)] Let $p > 0$, $e \ge 1$ and let $d=p^el$, where $l$ is not divisible by $p$. 
Assume that $(p^e, l) \ne (p, 1), (2^e, 1)$. 
Then, determine $\delta'(C)$.
\end{itemize}  
\end{problem}

In this article, we give a complete answer to these problems. 
 
\begin{theorem} \label{innerGP}
Let $p=2$, let $e \ge 2$ and let $C$ be a smooth plane curve of degree $d=2^e+1$. 
Then, $\delta(C)=d$ if and only if $C$ is projectively equivalent to a plane curve given by
\begin{equation} \label{d-Galois}
 \prod_{\alpha \in \mathbb F_{2^e}}(x+\alpha y+\alpha^2)+cy^{2^e+1}=0, \tag{1c} 
\end{equation} 
where $c \in K \setminus \{0, 1\}$. 
\end{theorem}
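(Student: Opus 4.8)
The plan is to prove the two implications separately, the forward (``only if'') direction being the harder one.

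\medskip
\noindent\emph{Preliminary structure of an inner Galois point.} I would first record the structure of a single inner Galois point $P$. Since $G_P=\mathrm{Gal}(K(C)/K(\mathbb{P}^1))$ acts trivially on the pencil of lines through $P$ (each fibre of $\pi_P$ is carried to itself), and $G_P$ is realised inside $\mathrm{PGL}_3(K)$ fixing $P$, every non-identity element of $G_P$ is an elation with center $P$; in characteristic $2$ such an elation has order $2$ and these elations commute, so $G_P\cong(\mathbb{Z}/2\mathbb{Z})^{e}$. Since $G_P$ acts transitively on the fibre of $\pi_P$ through $P$ while fixing $P$, the tangent line $T_PC$ meets $C$ only at $P$, with multiplicity $d$ (a ``total inflection point''). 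Now $G_P\subseteq\operatorname{Aut}(C)$ permutes the set of inner Galois points (automorphisms, being projective, carry inner Galois points to inner Galois points). Applying this from two distinct inner Galois points $P,Q$ and using transitivity of $G_P$ and of $G_Q$ on the relevant fibre, every point of $\overline{PQ}\cap C$ is an inner Galois point; and since $\overline{PQ}$ is not the tangent line at any of them (each such tangent meets $C$ only at its own point of tangency), $\overline{PQ}\cap C$ consists of exactly $d$ distinct inner Galois points. As $\delta(C)=d$, all $d$ inner Galois points are collinear, on a line $\ell_0$ with $\ell_0\cap C$ equal to precisely these $d$ points.

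\medskip
\noindent\emph{Coordinatisation.} For an inner Galois point $P\in\ell_0$, the group $G_P$ acts on $\ell_0\cong\mathbb{P}^1$ fixing $P$ and simply transitively on the other $2^e$ points; in an affine coordinate on $\ell_0\setminus\{P\}$ this exhibits those points as a coset of a $2^e$-element $\mathbb{F}_2$-subspace of $(K,+)$. Writing down these subgroups for two different choices of $P$ and rescaling so that one of them, $\Gamma$, contains $1$, the other is forced to be $\{0\}\cup\Gamma^{-1}$; a short field lemma then shows $\Gamma$ is closed under inverses, hence (since $1\in\Gamma$) under $x\mapsto x^{2}$, hence---using that every element of $\overline{\mathbb{F}_2}^{\times}$ has odd order---under multiplication, so $\Gamma=\mathbb{F}_{2^e}$. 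Thus after a projective change of coordinates $\ell_0=\{y=0\}$ and the inner Galois points are $\mathbb{P}^1(\mathbb{F}_{2^e})=\{(1:0:0)\}\cup\{(\alpha:0:1):\alpha\in\mathbb{F}_{2^e}\}$.

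\medskip
\noindent\emph{The equation.} The group $G_{(1:0:0)}$ consists of the transformations $(x:y:z)\mapsto(x+\ell(y,z):y:z)$ with $\ell$ ranging over a $2^e$-element $\mathbb{F}_2$-subspace $W$ of linear forms in $y,z$ whose $z$-coefficients exhaust $\mathbb{F}_{2^e}$; its ring of invariants is $K[y,z,\phi_W(x)]$ with $\phi_W(x)=\prod_{\ell\in W}(x+\ell(y,z))$, so the degree-$d$ defining polynomial must have the shape $F=\phi_W(x)(\lambda y+z)+g(y,z)$ with $g$ homogeneous of degree $d$. After the harmless substitution $z\mapsto z+\lambda y$ (which fixes $\ell_0$ and all the Galois points) one may take $\lambda=0$, and the total-inflection property at $(1:0:0)$ kills the $z^{d}$-coefficient of $g$. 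It remains to impose the Galois conditions at the points $(\alpha:0:1)$---most conveniently at $(0:0:1)$ together with conjugation by $G_{(1:0:0)}$---and the total-inflection property there: vanishing of the $x^{d}$-coefficient of $\tau^{*}F$ for $\tau\in G_{(0:0:1)}$ recovers the $\mathbb{F}_{2^e}$-structure, and the requirement that the total tangent line at each $(\alpha:0:1)$ be one of the linear factors of $\phi_W(x)$ forces $W=\{\alpha y+\alpha^{2}z:\alpha\in\mathbb{F}_{2^e}\}$ and $g=cy^{d}$. This gives $F=z\Pi+cy^{2^e+1}$ with $\Pi=\prod_{\alpha\in\mathbb{F}_{2^e}}(x+\alpha y+\alpha^{2}z)$, i.e.\ the curve \eqref{d-Galois}; smoothness then forbids $c=0$ (the polynomial becomes a product of lines) and $c=1$ (a singular point appears, as a direct computation with the partial derivatives shows), so $c\in K\setminus\{0,1\}$. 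This last step---pinning down $W$ and $g$ from the remaining Galois and total-inflection conditions---is where I expect the real difficulty to lie.

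\medskip
\noindent\emph{Converse.} For the curve $F=z\Pi+cy^{2^e+1}$ with $c\notin\{0,1\}$ I would check directly that $C$ is smooth: a singular point must have $z\neq0$, and then Euler's relation for $\Pi$ (whose degree $2^e$ is divisible by $p$) together with $\partial F=0$ forces $c=1$, a contradiction. The transformations $\sigma_\gamma:(x:y:z)\mapsto(x+\gamma y+\gamma^{2}z:y:z)$ and $\tau_\delta:(x:y:z)\mapsto(x:y:z+\delta^{2}x+\delta y)$, $\gamma,\delta\in\mathbb{F}_{2^e}$, preserve $C$---for $\tau_\delta$ one verifies $\tau_\delta^{*}(z\Pi)=z\Pi$ using $\prod_{\mu\in\mathbb{F}_{2^e}^{\times}}\mu=1$---and generate elementary abelian groups of order $2^e$ fixing $(1:0:0)$ and $(0:0:1)$ and acting trivially on the respective pencils of lines; thus $(1:0:0)$ and $(0:0:1)$ are inner Galois points, and applying the $\sigma_\gamma$ to $(0:0:1)$ shows each $(\alpha:0:1)$ is one as well, so $\delta(C)\ge d$. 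Finally, every inner Galois point is a total inflection point of $C$, and a direct computation shows $C$ has exactly $d$ total inflection points; hence $\delta(C)=d$.
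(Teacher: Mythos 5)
Your overall architecture matches the paper's: collinearity of the $d$ Galois points, total inflection at each of them, elementary abelian $G_P$ of elations, an invariant-theoretic derivation of the equation $\phi_W(x)\cdot z+g(y,z)=0$, and a smoothness analysis excluding $c=0,1$. The converse is essentially right (your $\tau_\delta$ computation using $\prod_{\mu\in\mathbb F_{2^e}^\times}\mu=1$ is the same check the paper performs via the symmetry $F(Z,Y,X)=F(X,Y,Z)$). However, there is a genuine gap exactly where you say you ``expect the real difficulty to lie,'' and it is the heart of the theorem. Your coordinatisation step only controls the restriction of $G_{P_1}$ to the line $\ell_0$ of Galois points; for $\sigma\in G_{P_1}$ with matrix having first row $(1,a_{12}(\sigma),a_{13}(\sigma))$ and $\ell_0=\{Y=0\}$, that restriction sees only the entries $a_{13}(\sigma)$, and your field lemma (which does work: from $W=\{0\}\cup V^{-1}$ one gets $V^{-1}\subseteq V$, then $a^2b^{-1}\in V$, then closure under multiplication) pins those down as $\mathbb F_{2^e}$. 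But the content of the theorem is the relation $a_{13}(\sigma)=a_{12}(\sigma)^2$, i.e.\ that $W=\{\alpha y+\alpha^2 z\}$ rather than an arbitrary additive family $\{b_\sigma y+c_\sigma z\}$ with $\{c_\sigma\}=\mathbb F_{2^e}$. Your proposed mechanism for this (vanishing of the $x^d$-coefficient of $\tau^*F$ for $\tau\in G_{(0:0:1)}$, plus the requirement that each total tangent line be a factor of $\phi_W$) is not carried out, and the $x^d$-coefficient condition in fact only re-derives information about the $a_{13}$'s of $G_{(0:0:1)}$; it does not touch the $a_{12}$'s.

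The paper's device for this step, which is absent from your proposal, is to use the involutions $\tau_i\in G_{P_i}$ ($3\le i\le d$) that swap $P_1$ and $P_2$: the products $\tau_3\tau_i$ form a cyclic group $H(C)$ of order $q-1$ generated by $\gamma=\mathrm{diag}(\zeta^2,\zeta,1)$, and conjugating a single $\sigma\in G_{P_1}$ with first row $(1,1,1)$ by $\gamma^j$ produces first rows $(1,\zeta^j,\zeta^{2j})$; counting shows these exhaust $G_{P_1}$, which is precisely the statement $a_{13}=a_{12}^2$. The unequal weights $(2,1,0)$ of this diagonal action are what create the $(\alpha,\alpha^2)$ pattern, and nothing in your restriction-to-$\ell_0$ argument can see them. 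Two smaller points: in the converse, Euler's relation alone does not force $c=1$ at a putative singular point (it only yields $\Pi_X=0$ and an identity that holds automatically on $C$); the paper needs the additive-polynomial structure $h_x(x,z)=z^q+z$ and the value-set computation $\{h(x,z):x,z\in\mathbb F_q\}=\{0,1\}$ to conclude. And your final count ``$C$ has exactly $d$ total inflection points'' is asserted, not proved; the paper instead closes the converse by observing $C$ is not Fermat and invoking the earlier result that two inner Galois points on a non-Fermat curve force $\delta(C)=d$.
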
 

\begin{theorem} \label{outerGP}
Let the characteristic $p >0$, let $e \ge 1$, let $l$ be not divisible by $p$, and let $C$ be a smooth plane curve of degree $d=p^el \ge 4$. 
If $(p^e, l)\ne (2^e, 1)$, then $\delta'(C)\le 1$. 
\end{theorem}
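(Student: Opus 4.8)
\noindent\emph{Strategy for the proof.} The plan is to assume that $C$ carries two distinct outer Galois points $P_1\neq P_2$ and to derive, under the standing hypothesis $(p^e,l)\neq(2^e,1)$, a contradiction. Write $G_i:=G_{P_i}\subset\mathrm{PGL}_3(K)$ for the Galois group of $\pi_{P_i}$; since $d\geq 4$, every automorphism of $C$ is linear and $\mathrm{Aut}(C)$ is finite, so all groups appearing below are finite subgroups of $\mathrm{PGL}_3(K)$. The first step is the structural reduction (the needed facts are contained in the cited earlier work, but are short to re-derive): an element of $G_i$ carries each fibre of $\pi_{P_i}$ to itself, and a general fibre $\ell\cap C$ (for a line $\ell\ni P_i$) is a set of $d\geq 4$ distinct points, so it cannot be contained in $\ell\cap\sigma(\ell)$ unless $\sigma(\ell)=\ell$; hence every $\sigma\in G_i$ fixes $P_i$ and fixes \emph{every} line through $P_i$. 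Consequently $G_i$ is a group of homologies and elations with centre $P_i$, its normal Sylow $p$-subgroup $N_i$ is exactly the set of elations in $G_i$ with centre $P_i$, it is elementary abelian of order $p^e$, and $G_i/N_i$ is cyclic of order $l$. Two further facts are immediate: a point $R\notin\overline{P_1P_2}$ satisfies $\sigma(R)\in\overline{P_1R}\cap\overline{P_2R}=\{R\}$ for $\sigma\in G_1\cap G_2$, so $G_1\cap G_2=\{1\}$; and, as each $G_i$ fixes every line through $P_i$, the line $\ell:=\overline{P_1P_2}$ is invariant under $\Gamma:=\langle G_1,G_2\rangle$, which therefore also acts on $\ell\cong\mathbb P^1$.

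The geometric engine is a local computation. If $g$ is a nontrivial elation with centre a point of $\ell$ and axis $\ell$, then in affine coordinates with $\ell=\{x=0\}$ one has $g\colon(x,y)\mapsto(x,y+ax)$ with $a\neq 0$; if $C$ were not tangent to $\ell$ at some $Q\in\ell\cap C$ it would be a graph $y=\varphi(x)$ near $Q$, whose $g$-image $y=\varphi(x)+ax$ differs from it, contradicting $g$-invariance. Hence \emph{whenever some nontrivial elation of $G_i$ has axis $\ell$, the line $\ell$ is tangent to $C$ at every point of $\ell\cap C$}, so $\#(\ell\cap C)\leq d/2$ while $\sum_{Q\in\ell\cap C}I_Q(C,\ell)=d$; moreover, evaluating at a uniformizer at $Q$ shows such an elation is wildly ramified there with break $I_Q(C,\ell)-1$. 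Regarding $N_i$ as an $\mathbf F_p$-subspace of the two-dimensional space of infinitesimal elations with centre $P_i$, one gets a dichotomy: either all elations of $N_i$ share one axis $\ell_i\ni P_i$, or their axes sweep out at least $p+1$ distinct lines through $P_i$, along each of which $C$ is tangent at all its intersection points.

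I would then split according to the commutation of $N_1$ and $N_2$; a direct matrix computation shows $g\in N_1$ and $h\in N_2$ commute if and only if both have axis $\ell$. \textit{Case $\mathrm{A}$: every elation of $N_1$ and of $N_2$ has axis $\ell$.} Then $N_1,N_2$ commute, so (using $N_1\cap N_2=\{1\}$) the group $V:=N_1N_2\cong N_1\times N_2$ is elementary abelian of order $p^{2e}$; every nontrivial element of $V$ is a transvection with axis $\ell$ whose fixed locus on $C$ is the common set $T:=\ell\cap C$, and $C\to C/V$ is unramified off $T$ and totally wildly ramified over $T$. Bounding the different exponent at $Q\in T$ below by $I_Q(C,\ell)\,(p^{2e}-1)$ and using $g(C)=(d-1)(d-2)/2$, $g(C/V)\geq 0$, the Hurwitz formula reduces to $(d-2)(d-p^{2e})\geq 0$, i.e. $l\geq p^e$. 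To kill the remaining possibilities $l\geq p^e$ I would combine $\sum_{Q\in T}I_Q(C,\ell)=d$ with the faithful action on $\ell$ of the cyclic group $G_i/N_i\cong\mathbb Z/l$ (fixing $P_i$ and one further point of $\ell$ and preserving the divisor $\ell\cap C$), which forces congruence conditions on $\#T$ and on the multiplicities that conflict with a second Hurwitz estimate for $\langle V,\ (\text{a homology from }G_1)\rangle$. \textit{Case $\mathrm{B}$: some elation of $N_1$ or of $N_2$ has axis $\neq\ell$.} Then $U:=\langle N_1,N_2\rangle$ is non-abelian and contains elations with at least two distinct centres, producing several totally tangent lines of $C$; I would then derive a contradiction either from the finiteness bound on multitangent lines of a smooth degree-$d$ curve, or from the characteristic-$p$ bounds on $\#\mathrm{Aut}(C)$ applied with $g(C)=(d-1)(d-2)/2$.

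I expect the main obstacle to be exactly the interface between Case $\mathrm{B}$ and the surviving possibilities $l\geq p^e$ of Case $\mathrm{A}$: these are precisely the configurations occupied by the exceptional family of degree $d=2^e$ in characteristic $2$, so the argument must not merely produce a contradiction but extract sharp numerical constraints. Concretely, the delicate points are (i) controlling the wild different exponents of $C\to C/U$, which depend on the intersection multiplicities of $\ell$ and of the other elation axes with $C$ — restricted by the tangency lemma but not pinned down; (ii) tracking how the tame cyclic quotient $G_i/N_i\cong\mathbb Z/l$ interlocks with the wild group $N_i$, which governs the case $l\geq 2$; and (iii) showing that the resulting system of inequalities and congruences leaves $p=2$, $l=1$ as the only survivor, so that the hypothesis of the theorem closes the argument. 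Throughout, the positive-characteristic facts (finiteness and linearity of $\mathrm{Aut}(C)$, and the standard order bounds) are used freely, and the earlier classification results are invoked only to confirm that the surviving case $d=2^e$ lies outside the ranges already settled.
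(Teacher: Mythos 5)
Your structural reductions are sound and partially parallel the paper: the fact that every $\gamma\in G_{P_i}$ is a perspectivity with centre $P_i$, the splitting $G_{P_i}\cong(\mathbb Z/p\mathbb Z)^{\oplus e}\rtimes\mathbb Z/l\mathbb Z$, the observation that an order-$p$ element fixing $Q\in C$ forces $p\mid I_Q(C,\ell)$, and the dichotomy on elation axes (your Case A is exactly the paper's condition $v[\overline{P_1P_2}]=e$). Your Case A Hurwitz computation for $V=N_1N_2$ is also correct as far as it goes: it yields $d\ge p^{2e}$, i.e.\ $l\ge p^e$. But at this point the proof stops being a proof. The range $l\ge p^e$ is large and nonempty for every $p$, and your plan to eliminate it --- ``congruence conditions on $\#T$ and on the multiplicities that conflict with a second Hurwitz estimate'' --- is not an argument; no such congruences are derived and it is not clear what the second estimate would be. This is precisely where the paper has to work hardest: for $l\ge 3$ it invokes the Hefez--Homma refinement of Pardini's theorem (generic order of contact $2$), the St\"ohr--Voloch bound on the Wronskian divisor to count Galois points on $\overline{P_1P_2}$, an explicit determination of the defining equation $(\sum\alpha_mx^{p^m})^l+h(y)=0$ via the theory of additive polynomials, and then a ramification count for the projection from the auxiliary fixed point $Q$ (Lemma \ref{non-Galois}) to contradict Proposition \ref{fixed locus}(2)--(4). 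None of this is replaced by anything in your sketch.

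Case B has the same problem in sharper form. You propose to contradict it via ``the finiteness bound on multitangent lines'' or ``characteristic-$p$ bounds on $\#\mathrm{Aut}(C)$,'' but neither is quantified, and neither is obviously adequate: smooth plane curves in characteristic $p$ can have many totally tangent lines (the Hermitian curve is the standard warning), and the generic $\mathrm{Aut}$ bounds are far too weak to exclude a group of order $p^{2e}l^2$ by themselves. In the paper, showing that Case B cannot occur (equivalently $v=e$) is itself a major step: Proposition \ref{fixed locus}(1) for $l\ge 3$ uses the Wronskian degree bound together with Lemma \ref{fixed order}, and for $l\le 2$ Lemmas \ref{l=1} and \ref{l=2} require an explicit coefficient analysis of additive polynomials, including a lengthy computation isolating $p=3$, $e=1$ before deriving the final singularity. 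You correctly identify all three of these as the ``delicate points,'' but identifying the gaps is not the same as closing them; as written, the proposal establishes only the reduction to ($l\ge p^e$ in Case A) or (Case B), and the theorem's content lies entirely beyond that reduction.
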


Summarizing Theorems \ref{innerGP} and \ref{outerGP} and the results of Yoshihara, Homma and the present author, we will have the following classification theorem of smooth plane curves by the number $\delta(C)$ or $\delta'(C)$. 

\begin{theorem}[Yoshihara, Homma, Fukasawa] \label{summary}
Let $C$ be a smooth plane curve of degree $d \ge 4$ in characteristic $p \ge 0$. 
Then: 
\begin{itemize}
\item[(I)] $\delta(C)=0, 1, d$ or $(d-1)^3+1$. 
Furthermore, we have the following. 
\begin{itemize}
\item[(i)] 
$\delta(C)=(d-1)^3+1$ if and only if $p>0$, $d=p^e+1$ and $C$ is projectively equivalent to a Fermat curve. 
\item[(ii)] 
$\delta(C)=d \ge 5$ if and only if $p=2$, $d=2^e+1$ and $C$ is projectively equivalent to a curve defined by $\prod_{\alpha \in \mathbb F_{2^e}}(x+\alpha y+\alpha^2)+cy^{2^e+1}=0$, where $c \in K \setminus \{0, 1\}$. 
\item[(iii)]$\delta(C)=d=4$ if and only if $p \ne 2, 3$ and $C$ is projectively equivalent to a curve defined by $x^3+y^4+1=0$. 
\end{itemize}
\item[(II)] $\delta'(C)=0, 1, 3, 7$ or $(d-1)^4-(d-1)^3+(d-1)^2$. 
Furthermore, we have the following. 
\begin{itemize}
\item[(i)] $\delta'(C)=(d-1)^4-(d-1)^3+(d-1)^2$ if and only if $p>0$, $d-1$ is a power of $p$ and $C$ is projectively equivalent to a Fermat curve. 
\item[(ii)] 
$\delta'(C)=7$ if and only if $p=2$, $d=4$ and $C$ is projectively equivalent to Klein quartic curve. 
\item[(iii)] 
$\delta'(C)=3$ and three Galois points are not contained in a common line if and only if $d$ is not divisible by $p$, $d-1$ is  not a power of $p$, and $C$ is projectively equivalent to a Fermat curve. 
\item[(iv)] 
$\delta'(C)=3$ and three Galois points are contained in a common line if and only if $p=2$, $d=4$ and $C$ is projectively equivalent to a plane curve defined by 
$$ (x^2+x)^2+(x^2+x)(y^2+y)+(y^2+y)^2+c=0,$$
where $c \in K\setminus \{0,1\}$. 
\end{itemize} 
\end{itemize}  
\end{theorem}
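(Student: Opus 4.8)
\medskip

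\noindent\emph{Proof strategy for Theorem~\ref{summary}.}
The plan is to deduce Theorem~\ref{summary} by collating Theorems~\ref{innerGP} and~\ref{outerGP} with the classifications of Yoshihara, Homma and the author already in the literature, organizing the bookkeeping by the degree $d$ and the characteristic $p$.

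\textbf{Part (I).} For $\delta(C)$ I would first recall the structural input behind all inner-Galois results: an inner Galois point $P\in C$ yields a Galois covering $\pi_P\colon C\to\mathbb P^1$ of degree $d-1$, whose group $G_P$ acts on $C$, and analysing the interaction of two such points and the fixed loci of their groups forces $\delta(C)\le 1$ unless the configuration is extremely rigid. I would then split into the range ``$p\ne 2$, or $p=2$ with $d-1$ not a power of $2$'' — where \cite{fukasawa1, fukasawa2} already yield $\delta(C)\in\{0,1,d,(d-1)^3+1\}$ together with characterizations (i) and (iii) — and the complementary range ``$p=2$, $d=2^e+1$, $e\ge 2$'', where Theorem~\ref{innerGP} supplies the $\delta(C)=d$ case and \cite{fukasawa1} rules out intermediate values. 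In every range I would invoke Homma's count \cite{homma3} for the Fermat curve of degree $p^e+1$ to pin down $(d-1)^3+1$ in (i), and treat the boundary $d=4$ separately: the curve $x^3+y^4+1=0$ has smooth projective closure precisely when $p\ne 2,3$, which both isolates (iii) and shows it cannot collide with the Fermat quartic in characteristic $3$.

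\textbf{Part (II).} For $\delta'(C)$ I would use Theorem~\ref{outerGP} as the reduction step: whenever $p\mid d$ and we are not in the case ``$p=2$, $d=2^e$'', it gives $\delta'(C)\le 1$, so it remains to treat (a) $p\nmid d$ (including $p=0$), and (b) $d=2^e$ in characteristic $2$. In case (a), \cite{yoshihara, miura-yoshihara, fukasawa1, fukasawa2} give $\delta'(C)\in\{0,1,3,(d-1)^4-(d-1)^3+(d-1)^2\}$, with the largest value occurring exactly for the Fermat curve when $d-1$ is a power of $p$ (using \cite{homma3} for the count) and $\delta'(C)=3$ occurring exactly for the Fermat curve otherwise, in which case the three Galois points are not collinear; this yields (i) and (iii). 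In case (b), \cite{fukasawa5} gives $\delta'(C)\in\{0,1,3,7\}$, with $7$ only for the Klein quartic and $\delta'(C)=3$ only for the quartic displayed in (iv), whose three Galois points are collinear; since a Fermat curve of degree $2^e$ is singular in characteristic $2$, the non-collinear configuration of (iii) cannot occur here, so (ii) and (iv) are consistent and disjoint.

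\textbf{Main obstacle.} The main difficulty is not a new geometric argument — Theorems~\ref{innerGP} and~\ref{outerGP} and the cited papers carry that load — but the consistency check: one must verify that the value lists in (I) and (II) are closed (no intermediate value slips through), that the conditions in (i)--(iv) of each part are pairwise exclusive and jointly exhaustive, and that the overlapping boundary degrees ($d=4$; $d=p+1$; $d=2^e+1$) are assigned the same way in every characteristic. I expect the two genuinely delicate points to be: confirming, in characteristic $2$ with $d=2^e+1$, that nothing lies strictly between $d$ and $(d-1)^3+1$ (which follows by feeding Theorem~\ref{innerGP} into the bounds of \cite{fukasawa1}), and separating the two quartics carrying $\delta'(C)=3$ — the Fermat quartic (characteristic $\ne 2$, Galois points in general position) from the characteristic-$2$ quartic of (iv) (collinear Galois points) — which the collinearity clause accomplishes cleanly.
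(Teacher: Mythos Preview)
The paper gives no standalone proof of Theorem~\ref{summary}; it is presented purely as a summary obtained by combining Theorems~\ref{innerGP} and~\ref{outerGP} with the cited results of Yoshihara, Homma and the author, and your proposal is exactly this collation with the natural case split. One minor correction: the reference carrying the inner-Galois bounds in characteristic~$2$ for $d=2^e+1$ (ruling out values strictly between $d$ and $(d-1)^3+1$) is \cite{fukasawa2} (Part~III), not \cite{fukasawa1}, which treats quartics in characteristic~$3$.
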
 

This is a modified and extended version of the paper \cite[Part IV]{fukasawa2} (which will have been published only in arXiv).

\section{Preliminaries} 
Let $C \subset \mathbb P^2$ be a smooth plane curve of degree $d \ge 4$ in characteristic $p>0$. 
For a point $P \in C$, we denote by $T_PC \subset \mathbb P^2$ the (projective) tangent line at $P$. 
For a projective line $l \subset \mathbb P^2$ and a point $P \in C \cap l$, we denote by $I_P(C, l)$ the intersection multiplicity of $C$ and $l$ at $P$.  
We denote by $\overline{PR}$ the line passing through points $P$ and $R$ when $P \ne R$, and by $\pi_P: C \rightarrow \mathbb P^1; R \mapsto \overline{PR}$ the point projection from a point $P \in \mathbb P^2$. 
If $R \in C$, we denote by $e_R$ the ramification index of $\pi_P$ at $R$. 
It is not difficult to check the following.  
 
\begin{lemma} \label{index}
Let $P \in \mathbb P^2$ and let $R \in C$. 
Then for $\pi_P$ we have the following.
\begin{itemize}
\item[(1)] If $R=P$, then $e_R =I_R(C, T_RC)-1$. 
\item[(2)] If $R \ne P$, then $e_R=I_R(C, \overline{PR})$. 
\end{itemize} 
\end{lemma}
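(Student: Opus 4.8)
The plan is to read $e_R$ off directly from the definition of the ramification index: if $t$ is a local uniformizer of $\mathbb{P}^1$ at the point $\pi_P(R)$, then $e_R=\mathrm{ord}_R(\pi_P^{*}t)$, so in each case I will choose a convenient $t$ and recognize the resulting order as the claimed intersection multiplicity. Two standing facts about the smooth curve $C$ will be used: that $\pi_P$ extends to an honest morphism $C\to\mathbb{P}^1$ even when $P\in C$, so that $e_R$ is defined; and that for a line $\ell$ and a point $R\in C\cap\ell$ one has $I_R(C,\ell)=\mathrm{ord}_R(g|_C)$, where $g$ is a linear form cutting out $\ell$, computed after trivializing $\mathcal{O}_{\mathbb{P}^2}(1)$ near $R$.

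For part (2), after a projective change of coordinates I take $P=[0:0:1]$, so that $\pi_P$ is $[X:Y:Z]\mapsto[X:Y]$; I may assume $R$ lies in the chart $Y\neq 0$, say $R=[a:1:c]$ in affine coordinates $(x,z)=(X/Y,Z/Y)$, and then $\pi_P(R)=[a:1]$, with $u=X/Y$ a coordinate of $\mathbb{P}^1$ near that point. Pulling back the uniformizer $u-a$ yields the function $x-a$ on $C$, so $e_R=\mathrm{ord}_R(x-a)$. Since the locus $\{x=a\}=\{X=aY\}$ is exactly the projective line through $P$ and $R$, this order is $I_R(C,\overline{PR})$, which is the assertion.

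For part (1), take $P=[0:0:1]$ and, using smoothness of $C$ at $P$, perform a linear change of $X,Y$ so that $T_PC=\{Y=0\}$; I then pass to the chart $Z\neq 0$, with coordinates $(x,y)$ and $P=(0,0)$. Because the tangent line is $\{y=0\}$, the function $x$ is a uniformizer of $C$ at $P$ and $C$ is locally the graph $y=y(x)$ with $m:=\mathrm{ord}_{x=0}y(x)=I_P(C,T_PC)\ge 2$. The morphism $\pi_P$ sends the point $[x:y(x):1]$ of $C$ to $[x:y(x)]=[1:y(x)/x]$, so $\pi_P(P)=[1:0]$ and $v=Y/X$ is a uniformizer of $\mathbb{P}^1$ there; hence $e_P=\mathrm{ord}_{x=0}(y(x)/x)=m-1=I_P(C,T_PC)-1$.

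I expect the one genuinely delicate point to be the bookkeeping at $P$ in part (1): one must confirm that $\pi_P$ really extends across $P$ with value the tangent direction $[1:0]$, and that $x$ is legitimately a uniformizer of $C$ at $P$. Both follow from smoothness of $C$ at $P$ together with the normalization $T_PC=\{Y=0\}$ (which forces $\partial f/\partial x(P)=0\neq\partial f/\partial y(P)$ and hence a power-series solution $y=y(x)$ with $y'(0)=0$), but they deserve an explicit sentence rather than being waved through. Once the uniformizers are in place, the order computations in both parts are routine local algebra, and part (2) is essentially the same bookkeeping as part (1) in the easier case $R\neq P$.
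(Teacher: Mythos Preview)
Your argument is correct. The paper does not actually prove this lemma: it is introduced with the sentence ``It is not difficult to check the following'' and no proof is given, so there is no approach to compare against. Your local-coordinate computation---pulling back a uniformizer on $\mathbb{P}^1$ and reading off the order as an intersection multiplicity---is exactly the standard verification the author has in mind, and your explicit handling of the extension of $\pi_P$ across $P$ in part~(1) (via the normalization $T_PC=\{Y=0\}$, so that $x$ is a uniformizer and $\pi_P^*(Y/X)=y(x)/x$ has order $I_P(C,T_PC)-1$) is the only step that genuinely needs a sentence of justification. One cosmetic remark: in part~(2) you write ``I may assume $R$ lies in the chart $Y\ne 0$''; strictly this uses that if $Y(R)=0$ then $X(R)\ne 0$ (since $R\ne P$), so one can swap the roles of $X$ and $Y$---worth a half-line, but not a gap.
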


Let $P$ be a Galois point. 
We denote by $G_P$ the group of birational maps from $C$ to itself corresponding to the Galois group ${\rm Gal}(K(C)/\pi_R^*K(\mathbb P^1))$. 
We find easily that the group $G_P$ is isomorphic to a subgroup of the automorphism group ${\rm Aut}(C)$ of $C$. 
We identify $G_P$ with the subgroup.  

If a Galois covering $\theta:C \rightarrow C'$ between smooth curves is given, then the Galois group $G$ acts on $C$ naturally. 
We denote by $G(R)$ the stabilizer subgroup of $R$. 
The following fact is useful to find Galois points (see \cite[III. 7.1, 7.2 and 8.2]{stichtenoth}). 
\begin{fact} \label{Galois covering} 
Let $\theta: C \rightarrow C'$ be a Galois covering of degree $d$ with a Galois group $G$ and let $R, R' \in C$. 
Then we have the following. 
\begin{itemize}
\item[(1)] For any $\sigma \in G$, we have $\theta(\sigma(R))=\theta(R)$.   
\item[(2)] If $\theta(R)=\theta(R')$, then there exists an element $\sigma \in G$ such that $\sigma(R)=R'$. 
\item[(3)] The order of $G(R)$ is equal to $e_R$ at $R$ for any point $R \in C$. 
\item[(4)] If $\theta(R)=\theta(R')$, then $e_R=e_{R'}$. 
\item[(5)] The index $e_R$ divides the degree $d$. 
\end{itemize}
\end{fact}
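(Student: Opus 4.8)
The plan is to deduce all five assertions from the ramification theory of the Galois function field extension $K(C)/K(C')$ induced by $\theta$, using throughout that $K$ is algebraically closed, so that every residue field is $K$ and every residue degree equals $1$. Identify $C'$ with the quotient $C/G$ and $\theta$ with the quotient morphism. Then assertion (1) is formal: $\theta\circ\sigma$ and $\theta$ induce the same embedding $K(C')\hookrightarrow K(C)$, since $\sigma$ fixes $K(C')$ pointwise, hence $\theta\circ\sigma=\theta$ and points lying in one $G$-orbit have a common image.

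The substantive step is (2), the transitivity of $G$ on a fibre, which I would prove by contradiction. Suppose $R$ and $R'$ lie over a point $Q'\in C'$ but $R'\notin G\cdot R$. By the approximation theorem for the discrete valuations of $K(C)$ centred at the finitely many points over $Q'$, choose $f\in K(C)$ with $v_{R''}(f)=1$ for each $R''\in G\cdot R$ and $v_{R''}(f)=0$ for every other point $R''$ over $Q'$; in particular $f(R')\neq 0$. The norm $N(f)=\prod_{\sigma\in G}\sigma(f)$ lies in $K(C')$, and for any point $R''$ over $Q'$ one has $v_{R''}(N(f))=\sum_{\sigma\in G}v_{\sigma^{-1}R''}(f)=e_{R''}\,v_{Q'}(N(f))$. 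Taking $R''=R$ forces $v_{Q'}(N(f))>0$ (the summand $\sigma=\mathrm{id}$ is positive and the others are nonnegative), while taking $R''=R'$ forces $v_{Q'}(N(f))=0$ (every point of $G\cdot R'$ lies off $G\cdot R$, so every summand vanishes); this contradiction proves (2).

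Granting (2), the remaining parts are the standard dictionary between stabilisers and inertia. The stabiliser $G(R)$ coincides with the decomposition group at $R$ (fixing the point is the same as preserving the valuation $v_R$), and since the residue extension is trivial it coincides with the inertia group, whose order is precisely the ramification index of $\theta$ at $R$; this gives (3). For (4), conjugate points have conjugate stabilisers, $G(\sigma R)=\sigma G(R)\sigma^{-1}$, so $e_{\sigma R}=|G(\sigma R)|=|G(R)|=e_R$, and combined with (2) this yields $e_R=e_{R'}$ whenever $\theta(R)=\theta(R')$. Finally (5) follows at once from (3) and Lagrange's theorem, $e_R=|G(R)|$ dividing $|G|=d$. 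The one genuine obstacle is (2); once it and the basic local theory of decomposition and inertia groups are in hand, (3)--(5) are formal.
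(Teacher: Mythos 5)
Your proof is correct. The paper gives no proof of this Fact at all --- it simply cites Stichtenoth III.7.1, 7.2 and 8.2 --- and your argument (the norm-plus-approximation proof of transitivity for (2), then the identification of the stabilizer $G(R)$ with the decomposition group, equal to the inertia group of order $e_R$ because every residue extension over the algebraically closed field $K$ is trivial) is precisely the standard one found in that reference.
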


We recall a theorem on the structure of the Galois group at a Galois point (see \cite[Part II]{fukasawa2}). 
Let $d-1=p^el$ (resp. $d=p^el$), where $l$ is not divisible by $p$, let $\zeta$ be a primitive $l$-th root of unity, and let $k=[\mathbb F_p(\zeta): \mathbb F_p]$. 
Let $P=(1:0:0)$ be an inner (resp. outer) Galois point for $C$. 
The projection $\pi_P:C \rightarrow \mathbb P^1$ is given by $(x:y:1) \mapsto (y:1)$. 
We have a field extension $K(x,y)/K(y)$ via $\pi_P$. 
Let $\gamma \in G_P$. 
Then, the automorphism $\gamma \in G_P$ can be extended to a linear transformation of $\mathbb P^2$ (see \cite[Appendix A, 17 and 18]{acgh} or \cite{chang}). 
Let $A_{\gamma}=(a_{ij})$ be a $3 \times 3$ matrix representing $\gamma$. 
Since $\gamma \in G_P$, $\sigma^*(y)=y$. 
Then, $(a_{21}x+a_{22}y+a_{23})-(a_{31}x+a_{32}y+a_{33})y=0$ in $K(x, y)$. 
Since $d \ge 4$, we have $a_{21}=a_{23}=a_{31}=a_{32}=0$ and $a_{22}=a_{33}$. 
We may assume that $a_{22}=a_{33}=1$. 
Since $\gamma^{p^el}=1$, we have $a_{11}^l=1$.
We take a group homomorphism $G_P \rightarrow K \setminus 0; \gamma \mapsto a_{11}(\gamma)$, where $a_{11}(\gamma)$ is the $(1, 1)$-element of $A_{\gamma}$.  
Then, we have a splitting exact sequence of groups 
$$ 0 \rightarrow (\mathbb Z/p \mathbb Z)^{\oplus e} \rightarrow G_P \rightarrow \langle \zeta \rangle \rightarrow 1, $$  
and the following theorem. 
\begin{theorem} \label{group}
Let $C \subset \mathbb P^2$ be a smooth curve and let $P$ be an inner (resp. outer) Galois point. 
Then, $k$ divides $e$ and $G_P \cong (\mathbb Z/p\mathbb Z)^{\oplus e} \rtimes \langle \zeta \rangle$. 
 
\end{theorem}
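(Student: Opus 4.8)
Since the exact sequence displayed just above splits, the isomorphism $G_P\cong(\mathbb Z/p\mathbb Z)^{\oplus e}\rtimes\langle\zeta\rangle$ follows at once from the standard fact that a split extension is a semidirect product; so the plan is to devote the argument to the divisibility $k\mid e$. Write $N$ for the kernel of the homomorphism $\gamma\mapsto a_{11}(\gamma)$, which, as recorded above, maps $G_P$ onto $\langle\zeta\rangle$ with $N\cong(\mathbb Z/p\mathbb Z)^{\oplus e}$. First I would observe that, with the normalization fixed above, every $\gamma\in N$ is represented by
$$A_\gamma=\begin{pmatrix}1 & a_{12}(\gamma) & a_{13}(\gamma)\\ 0 & 1 & 0\\ 0 & 0 & 1\end{pmatrix},$$
that $\gamma\mapsto(a_{12}(\gamma),a_{13}(\gamma))$ is an injective group homomorphism $N\to(K,+)^{\oplus 2}$, and hence that $N$ can be regarded as an $\mathbb F_p$-subspace of $K^{\oplus 2}$.

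Next I would fix an element $g\in G_P$ with $a_{11}(g)=\zeta$ and analyze the conjugation map $c_g\colon N\to N$, $n\mapsto gng^{-1}$, which is an $\mathbb F_p$-linear automorphism of $N$ since $N$ is normal in $G_P$. Multiplying out the $3\times 3$ matrices (with $A_g$ in its normalized form) shows that $c_g(n)=\zeta\cdot n$ for all $n\in N$; in other words, under the embedding $N\subseteq K^{\oplus 2}$, the map $c_g$ is the restriction of scalar multiplication by $\zeta$. I would then give $N$ the structure of an $\mathbb F_p[T]$-module by letting $T$ act as $c_g$: if $\mu(T)\in\mathbb F_p[T]$ is the minimal polynomial of $\zeta$ over $\mathbb F_p$, then $\mu(c_g)(n)=\mu(\zeta)\cdot n=0$ for every $n\in N$, so $\mu(T)$ annihilates $N$ and the module structure factors through $\mathbb F_p[T]/(\mu(T))\cong\mathbb F_p(\zeta)=\mathbb F_{p^k}$. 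Since any module over a field is a vector space over it, $N$ is an $\mathbb F_{p^k}$-vector space, and comparing cardinalities yields $p^e=|N|=(p^k)^{\dim_{\mathbb F_{p^k}}N}$, so $k\mid e$ (the degenerate case $N=0$, i.e.\ $e=0$, being trivial).

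The step I expect to be the crux is the matrix computation identifying $c_g$ with multiplication by exactly $\zeta$: it is here that one genuinely uses the fact, recalled above, that every element of $G_P$ extends to a linear automorphism of $\mathbb P^2$, for this is what forces the $\langle\zeta\rangle$-action on $N$ to be through $\zeta$ itself rather than through some less controlled power of it. Once that is in place, everything else is formal, and the conceptual content of the theorem is simply that $N$, being a representation of $\langle\zeta\rangle$ over $\mathbb F_p$ on which $\zeta$ acts with minimal polynomial $\mu$, must be a vector space over $\mathbb F_{p^k}=\mathbb F_p(\zeta)$; I foresee no further obstacle.
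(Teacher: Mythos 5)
Your proof is correct: the conjugation computation showing that $c_g$ acts on $\mathcal{K}_P\subset K^{\oplus 2}$ as multiplication by $\zeta$ is right, and the resulting $\mathbb F_p(\zeta)=\mathbb F_{p^k}$-vector space structure on $\mathcal{K}_P$ gives $k\mid e$, with the semidirect product decomposition coming for free from the split exact sequence. The paper itself only recalls this theorem from an earlier paper without reproving it, but it relies on exactly the structure you establish (for instance in the proof of Proposition \ref{equation1}, where $\mathcal{K}_P$ is used as an $\mathbb F_p(\zeta)$-vector space with $km=e$), so your argument is the intended one.
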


(The condition that $k$ divides $e$ is equivalent to that $l$ divides $p^e-1$.)
We denote the kernel (resp. quotient) by $\mathcal{K}_P$ (resp. by $\mathcal{Q}_P$). 
An element $\sigma \in \mathcal{K}_P$ (resp. a generator $\tau \in \mathcal{Q}_P$) is represented by a matrix 
$$ 
A_{\sigma}=
\left(\begin{array}{ccc}
1 & a_{12}(\sigma) & a_{13}(\sigma) \\
0 & 1 & 0 \\
0 & 0 & 1
\end{array}\right)
(\mbox{ resp. } A_{\tau}= 
\left(
\begin{array}{ccc} 
\zeta & a_{12}(\tau) & a_{13}(\tau)\\
0 & 1 & 0 \\
0 & 0 & 1
\end{array}\right)), 
$$  
where $a_{12}(\sigma), a_{13}(\sigma), a_{12}(\tau), a_{13}(\tau) \in K$.   
For each non-identity element $\gamma \in G_P$, there exist $\sigma \in \mathcal{K}_P$ and $i$ such that $\gamma=\sigma \tau^i$.
Then, there exists a unique line $L_{\gamma}$, which is defined by $(\zeta^i-1)X+(a_{12}(\sigma)+a_{12}(\tau^i))Y+(a_{13}(\sigma)+a_{13}(\tau^i))Z=0$, such that $\gamma(R)=R$ for any $R \in L_{\gamma}$. 
Note that $P \in L_{\gamma}$ if and only if $\gamma \in \mathcal{K}_P$. 
For a suitable system of coordinates, we can take $a_{12}(\tau)=a_{13}(\tau)=0$.

Finally in this section, we note the following facts on automorphisms of $\mathbb P^1$. 

\begin{lemma} \label{automorphisms of P^1} 
We denote by ${\rm Aut}(\mathbb P^1)$ the automorphism group of $\mathbb P^1$. 
\begin{itemize} 
\item[(1)] Let $P_1, P_2, P_3 \in \mathbb P^1$ be three distinct points and let $\gamma_1, \gamma_2 \in {\rm Aut}(\mathbb P^1)$. 
If $\gamma_1(P_i)=\gamma_2(P_i)$ for $i=1, 2, 3$, then $\gamma_1=\gamma_2$.  
\item[(2)] Let $P_1, P_2 \in \mathbb P^1$ be distinct points and let $G \subset {\rm Aut}(\mathbb P^1)$ be a finite subgroup.   
If $\gamma(P_1)=P_1$ and $\gamma(P_2)=P_2$ for any $\gamma \in G$, then $G$ is a cyclic group whose order is not divisible by $p$ if $p>0$.   
\item[(3)] Let $l$ be not divisible by $p$, let $P \in \mathbb P^1$, and let $G \subset {\rm Aut}(\mathbb P^1)$ be a subgroup of order $l$. 
Assume that $G$ is cyclic and $\tau(P)=P$ for any $\tau \in G$. 
Then, there exists a unique point $Q$ such that $Q \ne P$ and $\tau(Q)=Q$ for any $\tau \in G$. 

\end{itemize}
\end{lemma}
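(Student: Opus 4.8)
The plan is to reduce all three assertions to the concrete description ${\rm Aut}(\mathbb P^1)\cong{\rm PGL}_2(K)$ acting by fractional linear transformations, together with the classical fact that this action is sharply $3$-transitive and with the elementary analysis of a single transformation by the number of its fixed points. For (1), after applying $\gamma_2^{-1}$ it suffices to show that an element $\gamma:=\gamma_2^{-1}\gamma_1$ fixing three distinct points is the identity; choosing coordinates so that these points are $0,1,\infty$ (possible by $3$-transitivity), a direct computation with $z\mapsto(az+b)/(cz+d)$ shows that fixing $0,1,\infty$ forces $c=b=0$ and $a=d$, so $\gamma={\rm id}$ and $\gamma_1=\gamma_2$.

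For (2), I would move the two common fixed points to $0$ and $\infty$, so that every $\gamma\in G$ has the form $z\mapsto a(\gamma)z$ with $a(\gamma)\in K^\times$; then $\gamma\mapsto a(\gamma)$ is an injective homomorphism $G\hookrightarrow K^\times$. A finite subgroup of the multiplicative group of a field is cyclic, and when $p>0$ its order is prime to $p$ because $x^p=1$ implies $(x-1)^p=0$, hence $x=1$; that is, $K^\times$ has no nontrivial $p$-torsion. This gives the asserted structure of $G$.

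For (3) — where I read the hypothesis as $l\ge 2$, since for $l=1$ the conclusion fails — I would move $P$ to $\infty$, so that each element of $G$ is an affine map $z\mapsto az+b$. Let $\tau$ generate $G$, with $\tau(z)=az+b$. If $a=1$, then $\tau$ is a translation, of infinite order when $p=0$ and of order $1$ or $p$ when $p>0$; since $\tau$ has order $l$ with $p\nmid l$ and $l\ge 2$, this is impossible, so $a\ne 1$. Then $Q:=b/(1-a)$ is the unique fixed point of $\tau$ in $\mathbb P^1\setminus\{P\}$; it is fixed by every power of $\tau$, hence by all of $G$, and any point $\ne P$ fixed by $\tau$ solves $az+b=z$, which has $Q$ as its only solution. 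So $Q$ is the required unique point.

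I do not anticipate a genuine obstacle: these are standard facts about ${\rm PGL}_2$. The only steps needing slight care are the positive-characteristic input in (2) — that finite subgroups of $\mathbb G_m$ have order prime to $p$ — and, in (3), ruling out the translation case, which is exactly where the hypotheses $p\nmid l$ and $l\ge 2$ are used; without them the statement is false.
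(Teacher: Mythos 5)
Your proposal is correct and follows essentially the same route as the paper: (1) by the sharp $3$-transitivity of linear fractional transformations, (2) by normalizing the fixed points and embedding $G$ into $K^{\times}$ via the multiplier, and (3) by normalizing $P$ to $\infty$ and solving for the second fixed point of a generator. The only difference is cosmetic (affine maps versus explicit $2\times 2$ matrices), plus your explicit remark that $l\ge 2$ is implicitly needed in (3), a point the paper's proof leaves tacit.
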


\begin{proof} 
The fact (1) is easily proved, if we use the classical fact that any automorphism of $\mathbb P^1$ is a linear transformation. 
We prove (2). 
We may assume that $P_1=(1:0)$ and $P_2=(0:1)$. 
Let $\gamma \in G$. 
Since $\gamma(P_1)=P_1$ and $\gamma(P_2)=P_2$, $\gamma$ is represented by a matrix 
$$ A_{\gamma}=\left(\begin{array}{cc}
a(\gamma) & 0  \\
0 & 1 
\end{array}\right), $$
where $a(\gamma) \in K$. 
Then, a homomorphism $\psi: G \rightarrow K \setminus 0: \gamma \mapsto a(\gamma)$ is injective and $\psi(G)$ is cyclic. 
Let $m$ be the order of $\psi(G)$. 
Then, $\psi(G)$ is contained in a set $\{x \in K \setminus 0 | x^m-1=0\}$. 
If $m$ is divisible by $p$, the set consists of at most $m/p$ elements. 
Therefore, $m$ is not divisible by $p$. 
We have the conclusion. 

We prove (3). 
We may assume that $P=(1:0)$. 
Let $\tau$ be a generator of $G$. 
Since $\tau(P)=P$ and $\tau$ is an automorphism of order $l$ not divisible by $p$, $\tau$ is represented by a matrix 
$$ A_{\tau}=\left(\begin{array}{cc} \zeta & b \\
0 & 1 
\end{array}\right),
$$
where $\zeta$ is a primitive $l$-th root of unity and $b \in K$.
Then, $\tau^i$ is represented by a matrix 
$$ A_{\tau^i}=\left(\begin{array}{cc} \zeta^i & \frac{\zeta^i-1}{\zeta-1}b \\
0 & 1 \end{array}\right). $$
Let $Q=(x:1)$. 
Then, $\tau^i(Q)=Q$ if and only if $(\zeta-1)x+b=0$. 
We have the conclusion. 
\end{proof}

\section{Only-if-part of the proof of Theorem \ref{innerGP}} 
Let $p=2$, let $q=2^e \ge 4$ and let $C$ be a plane curve of degree $d=q+1$. 
Assume that $\delta(C)=d$. 
Let $P_1, \ldots, P_d$ be inner Galois points for $C$. 
By the results of the previous paper \cite[Part III, Lemma 1, Propositions 1, 3 and 4]{fukasawa2}, we have the following. 

\begin{proposition} \label{Basics} 
Assume that $\delta(C)=d$. 
Then, we have the following. 
\begin{itemize}
\item[(1)] Galois points $P_1, \ldots, P_d$ are contained in a common line.
\item[(2)] For any $i$ and any element $\sigma \in G_{P_i} \setminus \{1\}$, the order of $\sigma$ is two. 
\item[(3)] For any $i$ and any elements $\sigma, \tau \in G_{P_i} \setminus \{1\}$ with $\sigma \ne \tau$, $L_{\sigma} \ne T_{P_i}C$ and $L_{\sigma} \ne L_{\tau}$. 
In particular, a set $\{T_{P_1}C \cap T_{P_i}C|2 \le i \le d\}$ consists of exactly $d-1$ points. 
\end{itemize} 
By the condition $(1)$ and Fact $\ref{Galois covering}(2)$, for each $i$ with $3 \le i \le d$, there exists $\tau_i \in G_{P_i}$ such that $\tau_i(P_1)=P_2$. 
Let $\{Q\}=T_{P_1}C \cap T_{P_2}C$. 
In addition, we have the following by the condition $(2)$. 
\begin{itemize}
\item[(4)] For any $i$ with $3 \le i \le d$, $\tau_i(P_2)=P_1$ and $\tau_i(Q)=Q$. 
\item[(5)] For any $i, j$ with $3 \le i, j \le d$, $\tau_i\tau_j(P_1)=P_1$, $\tau_i\tau_j(P_2)=P_2$ and $\tau_i\tau_j(Q)=Q$.
\end{itemize}
\end{proposition}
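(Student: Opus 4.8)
The plan is the following. Conditions (1), (2) and (3) are nothing but \cite[Part III, Lemma 1, Propositions 1, 3 and 4]{fukasawa2} transcribed into the present notation, so I would simply cite that paper for them; the one point worth flagging is that condition (3), asserting that $\{T_{P_1}C\cap T_{P_i}C\mid 2\le i\le d\}$ has exactly $d-1$ points, in particular forces $T_{P_1}C\ne T_{P_2}C$, so that $T_{P_1}C\cap T_{P_2}C$ really is a single point $Q$. For the existence of $\tau_i$, I would argue: by (1) the three distinct points $P_1,P_2,P_i$ lie on a common line, hence $\overline{P_iP_1}=\overline{P_iP_2}$ and therefore $\pi_{P_i}(P_1)=\pi_{P_i}(P_2)$; since $\pi_{P_i}:C\to\mathbb P^1$ is a Galois covering with group $G_{P_i}$, Fact \ref{Galois covering}(2) yields $\tau_i\in G_{P_i}$ with $\tau_i(P_1)=P_2$.

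Next I would prove (4). Since $\tau_i(P_1)=P_2\ne P_1$, the element $\tau_i$ is non-trivial, so by (2) it has order two; hence $\tau_i(P_2)=\tau_i^2(P_1)=P_1$. For $\tau_i(Q)=Q$, recall from Section 2 that every automorphism of $C$ extends to a projective linear transformation of $\mathbb P^2$, and such a transformation carries $T_RC$ to $T_{\tau_i(R)}C$ for each $R\in C$. Thus $\tau_i(T_{P_1}C)=T_{P_2}C$ and $\tau_i(T_{P_2}C)=T_{P_1}C$, so $\tau_i$ maps the one-point set $T_{P_1}C\cap T_{P_2}C=\{Q\}$ onto itself, giving $\tau_i(Q)=Q$.

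Finally, (5) is immediate from (4): for $3\le i,j\le d$ we get $\tau_i\tau_j(P_1)=\tau_i(P_2)=P_1$ and, using $\tau_j(P_2)=P_1$ from (4), $\tau_i\tau_j(P_2)=\tau_i(P_1)=P_2$; and since $\tau_i$ and $\tau_j$ both fix $Q$, so does $\tau_i\tau_j$.

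I do not anticipate a serious obstacle here. The substantive content — collinearity of the $d$ Galois points and the fact that every non-trivial element of $G_{P_i}$ is an involution — is imported wholesale from \cite{fukasawa2}, and the genuinely new assertions (4) and (5) reduce to the elementary remarks that $\tau_i^2=1$ and that automorphisms of $C$ send tangent lines to tangent lines. The only delicate point is the well-definedness of $Q$, and that is already guaranteed by condition (3).
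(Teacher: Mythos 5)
Your proposal is correct and follows the same route as the paper: items (1)--(3) are imported from \cite[Part III]{fukasawa2}, the element $\tau_i$ is produced via collinearity and Fact \ref{Galois covering}(2), and (4)--(5) are deduced from the involution property together with the fact that automorphisms of $C$ are linear and hence permute tangent lines (the paper merely asserts this step ``by the condition (2)''; you supply the details, including the well-definedness of $Q$, correctly).
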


\begin{lemma} \label{coordinates}
For a suitable system of coordinates, we may assume that $P_1=(1:0:0)$, $P_2=(0:0:1)$ and $Q=(0:1:0)$.  
\end{lemma}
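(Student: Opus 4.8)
The plan is to exhibit an explicit projective change of coordinates carrying the three named points to the standard positions, and then verify that such a transformation exists by appealing to the classical fact that $\mathrm{PGL}(3,K)$ acts simply transitively on ordered $4$-tuples of points in general position—or, more economically here, that it acts transitively on ordered triples of non-collinear points. So the first task is to check that $P_1$, $P_2$, $Q$ are not collinear. Suppose they were; then $Q \in \overline{P_1P_2}$, and since $Q \in T_{P_1}C \cap T_{P_2}C$ this would force both tangent lines $T_{P_1}C$ and $T_{P_2}C$ to coincide with $\overline{P_1P_2}$. But by Proposition~\ref{Basics}(3) (applied with $i=2$), the tangent line $T_{P_1}C$ meets $T_{P_2}C$ in a single point and, more to the point, $L_\sigma \ne T_{P_i}C$ for the relevant automorphisms, so the two tangent lines are distinct; hence $Q$ is the unique intersection point and cannot lie on the line $\overline{P_1P_2}$ unless $T_{P_1}C = \overline{P_1P_2} = T_{P_2}C$, contradicting their distinctness. (Alternatively: if $T_{P_1}C = \overline{P_1P_2}$ then $P_2 \in T_{P_1}C$, so $I_{P_1}(C,\overline{P_1P_2}) + I_{P_2}(C,\overline{P_1P_2}) \ge 2 + 1 = 3$, and since the $P_i$ are all on one line one pushes this to a contradiction with $\deg C = d$ and the genericity statements in Proposition~\ref{Basics}; the cleanest route is simply the distinctness of the two tangents.)

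Next, granting non-collinearity, I would invoke the standard normal-form result: for any three non-collinear points $R_1, R_2, R_3 \in \mathbb{P}^2$ there is a linear transformation sending $R_1 \mapsto (1:0:0)$, $R_2 \mapsto (0:0:1)$, $R_3 \mapsto (0:1:0)$. This is immediate because one can send $R_1, R_2, R_3$ together with any fourth point in general position to the four reference points; choosing the fourth point to be some point off the three lines $\overline{R_iR_j}$ makes the target tuple a frame. Applying this with $(R_1,R_2,R_3) = (P_1, P_2, Q)$ gives exactly the claimed coordinates. The whole argument is short: it is really just the observation that the three points are non-collinear plus a citation to projective linear algebra (the same fact about linear transformations already used in the proof of Lemma~\ref{automorphisms of P^1}).

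The only genuine obstacle—and it is a mild one—is the non-collinearity check, because one must be careful about which part of Proposition~\ref{Basics} delivers it cleanly. I expect the intended justification is that $T_{P_1}C$ and $T_{P_2}C$ are distinct lines (two distinct tangent lines meet in exactly one point, so $\{Q\}$ is well-defined as written), and distinctness of two lines through different points forces $Q \notin \overline{P_1P_2}$: if $Q$ were on $\overline{P_1P_2}$, then $T_{P_1}C$ would be the line through $P_1$ and $Q$, i.e.\ $\overline{P_1P_2}$ itself, and likewise $T_{P_2}C = \overline{P_1P_2}$, contradiction. One should also note that $P_1 \ne P_2$ (they are distinct Galois points) so $\overline{P_1P_2}$ makes sense. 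After that, nothing remains but to record the coordinate change; no computation is needed.
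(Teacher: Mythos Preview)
The paper states this lemma without proof, treating it as immediate. Your approach is correct and is essentially the only way to justify it: verify non-collinearity of $P_1,P_2,Q$ and then invoke transitivity of $\mathrm{PGL}(3,K)$ on non-collinear triples.

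One small point worth tightening: in your non-collinearity argument you write ``if $Q$ were on $\overline{P_1P_2}$, then $T_{P_1}C$ would be the line through $P_1$ and $Q$, i.e.\ $\overline{P_1P_2}$ itself,'' which tacitly assumes $Q\ne P_1$. The cleanest way to close this is to observe directly that $T_{P_i}C \ne \overline{P_1P_2}$ for every $i$: by Proposition~\ref{Basics}(1) the line $\overline{P_1P_2}$ contains all $d$ Galois points $P_1,\ldots,P_d\in C$, so by B\'ezout each $I_{P_i}(C,\overline{P_1P_2})=1$. Hence $T_{P_1}C\cap\overline{P_1P_2}=\{P_1\}$ and $T_{P_2}C\cap\overline{P_1P_2}=\{P_2\}$; if $Q\in\overline{P_1P_2}$ this forces $Q=P_1$ and $Q=P_2$ simultaneously, contradicting $P_1\ne P_2$. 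This also shows in one stroke that $T_{P_1}C\ne T_{P_2}C$ (so $Q$ is well-defined), without needing to route through Proposition~\ref{Basics}(3).
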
 
By Lemma \ref{coordinates} and Proposition \ref{Basics}(2)(4), $\tau_i$ is given by a matrix 
$$ A_{\tau_i}=
\left(\begin{array}{ccc}
0 & 0 & 1 \\
0 & a_i & 0 \\
a_i^2 & 0 & 0
\end{array}\right), 
$$
for some $a_i \in K$. 
Then, $\tau_i\tau_j$ is given by a matrix 
$$ A_{\tau_i\tau_j}=
\left(\begin{array}{ccc}
a_j^2 & 0 & 0 \\
0 & a_ia_j & 0 \\
0 & 0 & a_i^2
\end{array}\right). 
$$
Let $H(C)$ be a subgroup of ${\rm Aut}(\mathbb P^2)$ satisfying that 
\begin{itemize}
\item[(h1)] $\gamma(P_1)=P_1$, $\gamma(P_2)=P_2$ and $\gamma(Q)=Q$, 
\item[(h2)] $\{\gamma(P_i)| 3 \le i \le d\}=\{P_i|3 \le i \le d\}$, and 
\item[(h3)] $\gamma(C)=C$ 
\end{itemize} 
for any $\gamma \in H(C)$. 

\begin{lemma} \label{cyclic}
The group $H(C)$ is a cyclic group whose order is at most $d-2=q-1$. 
\end{lemma}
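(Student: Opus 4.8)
The plan is to identify $H(C)$, by restricting its elements to the common line $\ell=\overline{P_1P_2}$, with a finite cyclic subgroup of $K\setminus\{0\}$.

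First I would record that any $\gamma\in H(C)$ fixes the three vertices $P_1=(1:0:0)$, $P_2=(0:0:1)$, $Q=(0:1:0)$ of the coordinate triangle by (h1), hence is given by a diagonal matrix, say $\mathrm{diag}(\lambda_\gamma,\mu_\gamma,1)$ after normalising. With $\ell=\{Y=0\}$ (the common line of Proposition \ref{Basics}(1)), restriction to $\ell\cong\mathbb P^1$ gives a homomorphism $\rho\colon H(C)\to\mathrm{Aut}(\mathbb P^1)$, $\rho(\gamma)\colon(x:z)\mapsto(\lambda_\gamma x:z)$, whose image consists of automorphisms fixing $P_1$ and $P_2$; thus $\rho(H(C))$ is, via $\gamma\mapsto\lambda_\gamma$, a subgroup of $K\setminus\{0\}$. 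Since $P_3,\dots,P_d$ are the $q-1$ distinct points of $\ell\setminus\{P_1,P_2\}$, permuted by $\rho(H(C))$ by (h2), and since a nontrivial element of $\rho(H(C))$ fixes no point of $\mathbb P^1$ besides $P_1,P_2$, the group $\rho(H(C))$ acts freely on this nonempty $(q-1)$-element set. Hence $\rho(H(C))$ is finite of order dividing $q-1$ and, being a finite subgroup of $K\setminus\{0\}$, is cyclic. It therefore suffices to show that $\rho$ is injective.

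This injectivity is the heart of the argument. Suppose $1\neq\gamma\in\ker\rho$: then $\gamma$ fixes $\ell$ pointwise, so $\gamma=\mathrm{diag}(1,\mu,1)$ with $\mu\neq 1$. Since $P_1,\dots,P_d\in\ell$ they are all fixed by $\gamma$, and since $\gamma(C)=C$ by (h3) we get $\gamma(T_{P_i}C)=T_{\gamma(P_i)}(\gamma(C))=T_{P_i}C$, i.e.\ every tangent line $T_{P_i}C$ is $\gamma$-stable. Now a line $aX+bY+cZ=0$ is $\gamma$-stable exactly when $(a,b,c)$ is an eigenvector of $\gamma=\mathrm{diag}(1,\mu,1)$, and since $\mu\neq 1$ this forces either $b=0$, i.e.\ a line through $Q$, or $a=c=0$, i.e.\ the line $\ell$. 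But $T_{P_i}C\neq\ell$: by B\'ezout $\ell$ (not a component of the irreducible curve $C$) meets $C$ in the $d$ distinct points $P_1,\dots,P_d$, so $I_{P_i}(C,\ell)=1$, whereas $I_{P_i}(C,T_{P_i}C)\ge 2$. Hence $Q\in T_{P_i}C$ for all $i$. Then, for each $i$ with $2\le i\le d$, the lines $T_{P_1}C$ and $T_{P_i}C$ are distinct — a common line would pass through $P_1$ and $P_i$ and hence equal $\ell$ — and both pass through $Q$, so $T_{P_1}C\cap T_{P_i}C=\{Q\}$. Thus the set $\{T_{P_1}C\cap T_{P_i}C\mid 2\le i\le d\}$ reduces to the single point $\{Q\}$, contradicting Proposition \ref{Basics}(3), which asserts it has exactly $d-1=2^e\ge 4$ points. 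Therefore $\ker\rho=\{1\}$, and $H(C)\cong\rho(H(C))$ is cyclic of order dividing $q-1=d-2$, as claimed.

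I expect the injectivity of $\rho$ to be the only genuinely non-formal point: the diagonal form of the elements of $H(C)$ and the freeness of the action on $\{P_3,\dots,P_d\}$ are routine, but one must observe that a nontrivial element fixing $\ell$ pointwise would pin every tangent line $T_{P_i}C$ to the common point $Q$, after which Proposition \ref{Basics}(3) closes the argument at once. (Alternatively one could deduce finiteness of $H(C)$ from $H(C)\hookrightarrow\mathrm{Aut}(C)$ and the finiteness of $\mathrm{Aut}(C)$, since $g(C)=(d-1)(d-2)/2\ge 3$, but the free-action observation makes this unnecessary.)
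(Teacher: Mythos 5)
Your proof is correct, and it follows the same overall strategy as the paper's: use (h1) to put every $\gamma\in H(C)$ in diagonal form, restrict to the common line $\ell=\overline{P_1P_2}$, and use Proposition \ref{Basics}(3) to force injectivity of that restriction. The divergences are local but worth noting. For injectivity, the paper assumes only $\gamma(P_3)=P_3$, deduces that $\gamma$ is the identity on $\ell$, and then pins down the remaining diagonal entry by showing $\gamma$ fixes the three points $P_1$, $Q$ and $Q_0=T_{P_1}C\cap T_{P_3}C$ of $T_{P_1}C$ (with $Q_0\ne Q,P_1$ coming from Proposition \ref{Basics}(3)); you instead show that a nontrivial $\mathrm{diag}(1,\mu,1)$ stabilizes no line except $\ell$ and the lines through $Q$, so all $d$ tangent lines $T_{P_i}C$ would concur at $Q$, contradicting the same Proposition \ref{Basics}(3). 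The two arguments rest on identical inputs. For the order bound, the paper counts the possible images of $P_3$ under the injective orbit map (giving $\le d-2$), while your free-action argument yields the marginally sharper conclusion that $|H(C)|$ divides $q-1$; for cyclicity you invoke the standard fact that finite subgroups of $K^{\times}$ are cyclic, where the paper routes through its Lemma \ref{automorphisms of P^1}(2), which is the same fact. One phrasing slip: $P_3,\dots,P_d$ are $q-1$ distinct points \emph{contained in} $\ell\setminus\{P_1,P_2\}$, not all of that (infinite) set; this does not affect the argument, since freeness of the action on that $(q-1)$-element invariant set is all you use.
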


\begin{proof} 
By the condition (h1) of $H(C)$, for any $\gamma \in H(C)$, $\gamma$ is represented by a matrix
$$ A_{\gamma}=\left(\begin{array}{ccc}
a & 0 & 0 \\
0 & b & 0 \\
0 & 0 & 1
\end{array}\right) $$ 
for some $a, b \in K$. 
We prove that $\gamma$ depends only on the image of $P_3$.
Precisely, we show that for $\gamma_1, \gamma_2 \in H(C)$, if $\gamma_1(P_3)=\gamma_2(P_3)$, then $\gamma_1=\gamma_2$. 
To prove this, it suffices to show that $\gamma=1$ if $\gamma(P_3)=P_3$.  
Assume that $\gamma(P_3)=P_3$. 
Since $\gamma$ fixes three distinct points $P_1, P_2, P_3$ on the line $\overline{P_1P_2}$, $\gamma$ is identity on the line $\overline{P_1P_2}$, by Lemma \ref{automorphisms of P^1}(1) in Section 2.   
We have $a=1$, since $\overline{P_1P_2}$ is given by $Y=0$. 
On the other hand, by the condition (h3), $\gamma(T_{P_3}C)=T_{P_3}C$. 
Then, a point $Q_0$ given by $T_{P_1}C \cap T_{P_3}C$ is fixed by $\gamma$. 
Note that $Q_0 \ne Q, P_1$ by Proposition \ref{Basics}(3)(1) and Fact \ref{Galois covering}(3). 
Since $\gamma$ fixes three distinct points $P_1, Q, Q_0$ on the line $\overline{P_1Q}$ and $\overline{P_1Q}$ is given by $Z=0$, we have $b=1$. 

By the above discussion and the condition (h2), the order of $H(C)$ is at most $d-2=q-1$. 
We consider a group homomorphism from $H(C)$ to $\overline{P_1P_2} \cong \mathbb P^1$ given by restrictions. 
Then, this is injective by the above discussion. 
It follows from Lemma \ref{automorphisms of P^1}(2) that $H(C)$ is cyclic. 
\end{proof}

We consider a set $S=\{\tau_3\tau_i|3 \le i \le d\}$. 
Then, $S \subset H(C)$ by Proposition \ref{Basics}(5)(1). 
Since the cardinality of $S$ is $q-1$, $S=H(C)$ by Lemma \ref{cyclic}. 
Because $H(C)$ is cyclic, there exist $i$ such that $\tau_3\tau_i$ is a generator of $H(C)$. 
Therefore, $\tau_3\tau_i$ is given by a matrix
$$ A_{\tau_3\tau_i}=\left(\begin{array}{ccc}
\zeta^2 & 0 & 0 \\
0 & \zeta & 0 \\
0 & 0 & 1
\end{array}\right), $$
where $\zeta$ is a primitive $(q-1)$-th root of unity. 
We denote $\tau_3\tau_i$ by $\gamma$.

By Proposition \ref{Basics}(3), there exists an element $\sigma \in G_{P_1} \setminus \{1\}$ such that the $(1,2)$ and $(1,3)$-elements of a matrix $A_{\sigma}$ representing $\sigma$ are not zero (see also Section 2). 
For a suitable system of coordinates, we may assume that $\sigma$ is represented by a matrix
$$ A_{\sigma}=\left(\begin{array}{ccc}
1 & 1 & 1 \\
0 & 1 & 0 \\
0 & 0 & 1
\end{array}\right). $$
An automorphism $\gamma^j\sigma\gamma^{-j}$ is represented by a matrix
$$ \left(\begin{array}{ccc}
1 & \zeta^j & \zeta^{2j} \\
0 & 1 & 0 \\
0 & 0 & 1
\end{array}\right).  $$
In particular, $\gamma^j\sigma\gamma^{-j} \in G_{P_1}$ for any $j$ with $1 \le j \le q-1$. 
Since the cardinality of a set $\{\gamma^j\sigma\gamma^{-j}|1\le j \le q-1\} \subset G_{P_1}$ is $q-1$, $G_{P_1}=\{\gamma^j\sigma\gamma^{-j}|1 \le j \le q-1\} \cup \{1\}$. 
Then, a rational function $g(x,y):=\prod_{\alpha \in \mathbb F_q} (x+\alpha y+\alpha^2) \in K(x,y)$ is fixed by any element of $G_{P_1}$. 
Therefore, $g(x,y) \in K(y)$. 
By the conditions that the tangent line $T_{P_1}C=\overline{P_1Q}$ is given by $Z=0$ and that the tangent line $T_{P_2}C=\overline{P_2Q}$ is given by $X=0$, and considering the degree of $C$, $g(x,y)=cy^{q+1}$ in $K(x,y)$, where $c \in K \setminus 0$. 
Therefore, we have the equation $f(x,y)=g(x,y)+cy^{q+1}=0$. 

Finally in this section, we investigate conditions for the smoothness of $C$. 
Let $G(X, Y, Z):=Z^{q+1}g(X/Z, Y/Z)$ and let $F(X, Y, Z):=Z^{q+1}f(X/Z, Y/Z)$. 
Then, by direct computations, we have $F(Z, Y, X)=F(X, Y, Z)$. 
Since there exist exactly $d$ points contained in $C$ and a line defined by $Y=0$, such points are smooth. 
Therefore, singular points should lie on $Y \ne 0$. 
Let $h(x, z)=G(x, 1, z)$. 
We consider $h$ as an element of $K(z)[x]$. 
Then, a set $\{\alpha+\alpha^2z|\alpha \in \mathbb F_q\} \subset K(z)$, which consists of all roots of $h(x,z)=0$, forms an additive subgroup of $K(z)$. 
According to \cite[Proposition 1.1.5 and Theorem 1.2.1]{goss}, we have the following.

\begin{lemma} \label{smoothness1} 
The polynomial $h(x,z) \in K(z)[x]$ has only terms of degree equal to some power of $p$. 
Especially, $h_x(x,z)=z^q+z$, where $h_x$ is a partial derivative by $x$. 
\end{lemma}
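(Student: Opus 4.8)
The plan is to exploit the additive structure of the root set together with a classical fact from the theory of additive polynomials. Working in $K(z)[x]$, recall that the roots of $h(x,z)=0$ are precisely the elements $\alpha+\alpha^2 z$ for $\alpha \in \mathbb F_q$; since $q=2^e$ and squaring is additive in characteristic $2$, the map $\alpha \mapsto \alpha + \alpha^2 z$ is $\mathbb F_2$-linear, so this set of $q$ roots is an $\mathbb F_2$-subspace (in particular an additive subgroup) of $K(z)$ of order $q$. Because $G(X,Y,Z)$ has degree $q+1$ with $G(X,Y,0)=cY \prod_{\alpha}(X+\alpha Y)$ — wait, more carefully: $h(x,z)$ as a polynomial in $x$ has degree $q$, leading coefficient a unit, and $q$ distinct roots, so it is separable and equals (up to the scalar) the polynomial $\prod_{\alpha \in \mathbb F_q}(x - (\alpha+\alpha^2 z))$.

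First I would invoke \cite[Proposition 1.1.5]{goss}: a monic (or scalar multiple of monic) separable polynomial in $x$ over a field of characteristic $p$ whose root set is a finite additive subgroup is an \emph{additive} (or $p$-)polynomial, i.e.\ of the form $\sum_i c_i x^{p^i}$. Hence $h(x,z) = \sum_{i=0}^{e} c_i(z)\, x^{2^i}$ for suitable $c_i(z) \in K(z)$, with $c_e(z)$ a nonzero constant (the leading coefficient in $x$). This is exactly the first assertion of the lemma. The reference to \cite[Theorem 1.2.1]{goss} presumably supplies the converse/uniqueness or the precise correspondence between the additive subgroup and the coefficients, which pins down that every exponent appearing is a power of $2$ and nothing else.

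Next I would compute the partial derivative $h_x(x,z)$. Since we are in characteristic $2$, differentiating $\sum_i c_i(z) x^{2^i}$ in $x$ kills every term with $i \ge 1$ (the exponent $2^i$ is even) and leaves only the $i=0$ term, so $h_x(x,z) = c_0(z)$, a function of $z$ alone. It remains to identify $c_0(z)$. For this I would use that $c_0(z)$ is, up to sign and the overall leading scalar, the product of the nonzero roots: differentiating $\prod_{\alpha}(x-(\alpha+\alpha^2 z))$ and evaluating, or equivalently reading off the coefficient of $x^1$, gives (in characteristic $2$) $c_0(z) = \prod_{\alpha \in \mathbb F_q \setminus \{0\}} (\alpha + \alpha^2 z) = \prod_{\alpha \ne 0} \alpha(1+\alpha z)$. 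The factor $\prod_{\alpha \ne 0}\alpha$ is a nonzero constant which the normalization $g(x,y)=\prod_{\alpha}(x+\alpha y+\alpha^2)$ forces to be $1$ (the coefficient of the top $x$-power is $1$), and $\prod_{\alpha \ne 0}(1+\alpha z) = \prod_{\beta \ne 0}(z+\beta) = z^{q-1}+1$ since the nonzero elements of $\mathbb F_q$ are exactly the roots of $T^{q-1}-1$. Multiplying by the remaining root $z$ from the $\alpha=0$ factor — here I must be careful about whether the constant term of $h$ in $x$ is $x$ times $(z^{q-1}+1)$ or not; tracking the $\alpha=0$ factor $x + 0\cdot y + 0 = x$ shows $h(x,z)$ is divisible by $x$ and $c_0(z)$ is the coefficient of $x$ in $h$, namely $z \cdot (z^{q-1}+1) \cdot(\text{unit}) = z^q + z$ after normalization. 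Thus $h_x(x,z)=z^q+z$.

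The main obstacle I anticipate is bookkeeping the normalizing constants: one must verify that the overall leading scalar of $G(X,1,Z)$ in $x$ is genuinely $1$ (from the product form of $g$) so that the constant $\prod_{\alpha\ne 0}\alpha$ does not intrude, and one must correctly account for the $\alpha=0$ factor contributing the extra factor of $z$ that upgrades $z^{q-1}+1$ to $z^q+z$. Everything else is a direct application of the cited structure theorems for additive polynomials plus the characteristic-$2$ vanishing of derivatives of even powers.
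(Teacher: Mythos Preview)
Your approach is the paper's: invoke the Goss references for the additive-polynomial structure of $h$, then read off $h_x$ as the linear coefficient in $x$. The paper in fact gives no argument beyond the citation, so your computation of $c_0(z)=z^q+z$ is already more than the paper supplies.

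Two bookkeeping points to clean up, precisely where you anticipated trouble. First, the overall factor $z$ is not contributed by the $\alpha=0$ term (that factor is simply $x$); it comes from homogenizing $g$, which has total degree $q$, to degree $q+1$: one gets $G(X,Y,Z)=Z\prod_{\alpha}(X+\alpha Y+\alpha^2 Z)$ and hence $h(x,z)=z\prod_{\alpha}(x+\alpha+\alpha^2 z)$, so the leading $x$-coefficient of $h$ is $z$, not $1$. Second, $\prod_{\alpha\in\mathbb F_q^{*}}\alpha=1$ is not a consequence of any normalization of $g$; it is the standard identity $\prod_{\alpha\in\mathbb F_q^{*}}\alpha=-1$, which equals $1$ in characteristic $2$. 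With these two corrections your computation $c_0(z)=z\cdot 1\cdot(z^{q-1}+1)=z^q+z$ goes through cleanly.
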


Assume that $(x,z) \in C$ is a singular point, i.e. $h_x(x, z)=h_z(x,z)=0$. 
Then, $(x,z)$ is $\mathbb F_q$-rational by Lemma \ref{smoothness1}. 
We have $c \ne 1$ by the following. 

\begin{lemma} \label{smoothness2}
An equality $\{h(x,z)|x, z \in \mathbb F_q\}=\{0, 1\}$ holds. 
\end{lemma}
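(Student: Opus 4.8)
The plan is to compute $h(x,z)$ in closed form for $x,z\in\mathbb F_q$ and to recognize it, after a change of variable, as (essentially) the square of the trace map $\mathrm{Tr}:=\mathrm{Tr}_{\mathbb F_q/\mathbb F_2}$. First I would record the explicit shape of $h$: since $g(x,y)=\prod_{\alpha\in\mathbb F_q}(x+\alpha y+\alpha^2)$, clearing denominators gives $G(X,Y,Z)=Z^{q+1}g(X/Z,Y/Z)=Z\prod_{\alpha\in\mathbb F_q}(X+\alpha Y+\alpha^2 Z)$, hence
$$ h(x,z)=G(x,1,z)=z\prod_{\alpha\in\mathbb F_q}(x+\alpha+\alpha^2 z). $$
If $z=0$ then $h(x,0)=0$ for every $x$, so $0$ is attained; it remains to analyze $z\in\mathbb F_q\setminus\{0\}$.

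Next I would reduce the product to the Artin--Schreier polynomial $\beta\mapsto\beta^2+\beta$. As $z\in\mathbb F_q^{\times}$, the substitution $\alpha=z^{-1}\beta$ permutes $\mathbb F_q$, and $x+z^{-1}\beta+(z^{-1}\beta)^2 z=x+z^{-1}(\beta^2+\beta)=z^{-1}(zx+\beta^2+\beta)$. Since there are $q$ factors and $z^{q-1}=1$, this yields
$$ h(x,z)=z\prod_{\beta\in\mathbb F_q}z^{-1}(zx+\beta^2+\beta)=z^{\,1-q}\prod_{\beta\in\mathbb F_q}(zx+\beta^2+\beta)=\prod_{\beta\in\mathbb F_q}(zx+\beta^2+\beta). $$

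It then remains to identify $P(y):=\prod_{\beta\in\mathbb F_q}(y+\beta^2+\beta)$ for $y\in\mathbb F_q$. The additive map $\beta\mapsto\beta^2+\beta$ has kernel $\mathbb F_2$, so it is two-to-one onto its image, which has $q/2$ elements and equals $\ker(\mathrm{Tr})$ because $\mathrm{Tr}(\beta^2+\beta)=0$ for every $\beta$. Since the trace polynomial $\mathrm{Tr}(y)=\sum_{i=0}^{e-1}y^{2^i}$ is a monic separable linearized polynomial of degree $q/2$ whose roots are exactly the elements of $\ker(\mathrm{Tr})$, we get $\prod_{t\in\ker(\mathrm{Tr})}(y+t)=\mathrm{Tr}(y)$, whence $P(y)=\mathrm{Tr}(y)^2$. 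Therefore $h(x,z)=\mathrm{Tr}(zx)^2$; and because $\mathrm{Tr}(zx)\in\mathbb F_2$ we have $\mathrm{Tr}(zx)^2=\mathrm{Tr}(zx)$, so $h(x,z)=\mathrm{Tr}(zx)\in\{0,1\}$ for all $x\in\mathbb F_q$ and $z\in\mathbb F_q\setminus\{0\}$. Finally, for fixed $z\ne 0$ the map $x\mapsto zx$ is a bijection of $\mathbb F_q$, so $\mathrm{Tr}(zx)$ takes both values $0$ and $1$; combined with $h(x,0)=0$ this proves $\{h(x,z)\mid x,z\in\mathbb F_q\}=\{0,1\}$.

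The only step needing care is the reduction to $\beta^2+\beta$: it uses $z\in\mathbb F_q$ essentially, both to ensure that $\alpha\mapsto z^{-1}\alpha$ permutes $\mathbb F_q$ and to invoke $z^{q-1}=1$, so the closed form $h(x,z)=\mathrm{Tr}(zx)$ is not a polynomial identity in the indeterminate $z$ (in agreement with $h_x(x,z)=z^q+z$ of Lemma \ref{smoothness1}, which vanishes precisely when $z\in\mathbb F_q$). Apart from this, the argument only uses the standard fact that the polynomial vanishing exactly on an $\mathbb F_2$-subspace of $\mathbb F_q$ is the corresponding monic linearized polynomial, applied here to $\ker(\mathrm{Tr})$.
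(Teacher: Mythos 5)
Your proof is correct and follows essentially the same route as the paper's: both identify $h(x,z_0)$ for $z_0\in\mathbb F_q\setminus\{0\}$ as the square of a separable additive polynomial of degree $q/2$ (the paper's $h_0(x)=\sum_{i}z_0^{2^i}x^{2^i}$ is exactly your $\mathrm{Tr}(z_0x)$), and both conclude from the identity $\mathrm{Tr}(y)^2+\mathrm{Tr}(y)=y^q+y$ (the paper's $h_0(h_0+1)=z_0(x^q+x)$) that the values on $\mathbb F_q$ lie in $\{0,1\}$ and that both values occur. Your substitution $\alpha=z^{-1}\beta$ and the explicit recognition of the absolute trace repackage the same computation a bit more transparently, but no genuinely different idea is involved.
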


\begin{proof} 
If $z=0$, then $h(x,z)=0$.  
We fix $z_0 \in \mathbb F_q\setminus 0$.
We consider $h(x,z_0)=z_0\prod_{\alpha \in \mathbb F_q}(x+\alpha+\alpha^2z_0) \in \mathbb F_q[x]$. 
For each $\alpha \in \mathbb F_q$, there exists a unique $\beta \in \mathbb F_q$ such that $x+\alpha+\alpha^2z_0=x+\beta+\beta^2z_0$.
Therefore, the cardinality of a set $S_0:=\{\alpha+\alpha^2z_0|\alpha \in \mathbb F_q\}$ is $q/2=2^{e-1}$. 
By direct computations, we find that any element of $S_0$ is a root of a separable polynomial $h_0(x)=\sum_{i=0}^{e-1}z_0^{2^i}x^{2^i}$, which is of degree $q/2$. 
Then, $h(x,z_0)=h_0(x)^2$ as elements of $\mathbb F_q[x]$. 
Then, by direct computations, we have $h_0(x)(h_0(x)+1)=z_0(x^q+x)$ as elements of $\mathbb F_q[x]$. 
Assume $x \in \mathbb F_q$. 
Then, $h_0(x)(h_0(x)+1)=0$. 
Therefore, $h(x, z_0)=0$ or $1$. 
If we take $x \in \mathbb F_q \setminus S_0$, then $h_0(x) \ne 0$ and hence, $h(x, z_0)=1$. 
\end{proof}

\section{If-part of the proof of Theorem \ref{innerGP}} 
We use the same notation as in the previous section, $g, f, F$, and so on. Let $C$ be a plane curve given by Equation (\ref{d-Galois}) with $c \in K \setminus \{0, 1\}$. 
As in the previous section, $C$ is smooth. 
We prove $\delta(C)=d$. 
We consider the projection $\pi_{P_1}$ from $P_1=(1:0:0)$. 
Then, we have the field extension $K(x,y)/K(y)$ with $f(x, y)=g(x,y)+cy^{q+1}=0$. 
Since $(x+\alpha y+\alpha^2)+\beta y+\beta^2=x+(\alpha+\beta)y+(\alpha+\beta)^2$, we have $f(x+\alpha y+\alpha^2, y)=f(x,y)$ for any $\alpha \in \mathbb F_q$. 
Therefore, $P_1$ is Galois. 
By the symmetric property of $F(X, Y, Z)$ for $X, Z$, we find that a point $(0:0:1)$ is also inner Galois for $C$. 
We also find that there exist a tangent line $T$ such that $I_Q(C, T)=2$ for some $Q \in C \cap T$. 
Therefore, $C$ is not projectively equivalent to a Fermat curve of degree $q+1$ (see, for example, \cite{homma3}). 
According to \cite[Part III, Lemma 1 and Proposition 1]{fukasawa2}, we have $\delta(C)=d$.

\begin{remark}
A projective equivalence class of a plane curve given by Equation $(\ref{d-Galois})$ is uniquely determined by a constant $c \in K \setminus 0$.   
Therefore, infinitely many classes exist. 
Precisely, we have Lemma $\ref{class}$ below. 
\end{remark}

\begin{lemma} \label{class}
Let $a, b \in K \setminus 0$ and let $C_a$ (resp. $C_b$) be a plane curve given by Equation $(\ref{d-Galois})$ with $c=a$ (resp. $c=b$). 
If there exists a projective transformation $\phi$ such that $\phi(C_a)=C_b$, then $a=b$. 
\end{lemma}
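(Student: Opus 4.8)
Here is the approach I would take.

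The plan rests on the observation that, for $c\in K\setminus\{0,1\}$, the inner Galois points of the curve $C_c$ given by $(\ref{d-Galois})$ are completely explicit. Indeed, by the if-part proved above $(1:0:0)$ and $(0:0:1)$ are inner Galois, so $\delta(C_c)=d$ by the cited results; by Proposition \ref{Basics}(1) all $d$ of them then lie on the line $\ell:Y=0$ spanned by these two points, and since the defining form $F$ of $C_c$ satisfies $F(X,0,Z)=ZX(X^{q-1}+Z^{q-1})$ (with $q=2^e$), the intersection $C_c\cap\ell$ is precisely the set $\mathcal{G}:=\{(1:0:0),(0:0:1)\}\cup\{(\xi:0:1)\mid\xi^{q-1}=1\}$ of $\mathbb{F}_q$-rational points of $\ell$. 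Thus the inner Galois points of $C_c$ are exactly $\mathcal{G}$. I would first dispose of the degenerate cases: if $a$ or $b$ lies in $\{0,1\}$, the corresponding curve is not smooth (the value $0$ giving a reducible curve, the value $1$ a curve with a singular point), a property preserved by $\phi$; distinguishing further by reducibility (which singles out $c=0$) gives $a=b$ in those cases. So I assume $a,b\in K\setminus\{0,1\}$, whence $C_a$ and $C_b$ are smooth.

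Since being an inner Galois point is projective, $\phi$ maps the inner Galois points of $C_a$ onto those of $C_b$; as both sets equal $\mathcal{G}$, we get $\phi(\mathcal{G})=\mathcal{G}$, hence $\phi(\ell)=\ell$. Next I would normalize $\phi$. The curve $C_b$ admits the automorphisms $\sigma_\alpha:(X:Y:Z)\mapsto(X+\alpha Y+\alpha^2Z:Y:Z)$ for $\alpha\in\mathbb{F}_q$ (because $f_b(x+\alpha y+\alpha^2,y)=f_b(x,y)$) and $\iota:(X:Y:Z)\mapsto(Z:Y:X)$ (because $F_b(Z,Y,X)=F_b(X,Y,Z)$), and the group they generate acts transitively on $\mathcal{G}$ (the $\sigma_\alpha$ move $(0:0:1)$ onto every point of $\mathcal{G}\setminus\{(1:0:0)\}$, and $\iota$ exchanges the remaining two). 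Replacing $\phi$ by $\rho\circ\phi$ for a suitable $\rho\in{\rm Aut}(C_b)$, I may therefore assume $\phi(1:0:0)=(1:0:0)$; then $\phi$ is given by a matrix $\left(\begin{smallmatrix}\phi_{11}&\phi_{12}&\phi_{13}\\0&1&0\\0&\phi_{32}&\phi_{33}\end{smallmatrix}\right)$ with $\phi_{11}\phi_{33}\neq0$, and evaluating $\phi|_\ell$ at $(0:0:1)$ and $(1:0:1)$ forces $s:=\phi_{11}/\phi_{33}\in\mathbb{F}_q^\times$ and $t:=\phi_{13}/\phi_{33}\in\mathbb{F}_q$.

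The core of the argument is a direct comparison of $F_b\circ\phi$ with $\lambda F_a$, $\lambda\in K^\times$. Writing $\mu=\phi_{33}$, so $\phi_{11}=\mu s$ and $\phi_{13}=\mu t$, one gets
$$F_b(\phi(X,Y,Z))=(\phi_{32}Y+\mu Z)\prod_{\alpha\in\mathbb{F}_q}\!\bigl(\mu sX+(\phi_{12}+\alpha+\alpha^2\phi_{32})Y+\mu(t+\alpha^2)Z\bigr)+bY^{q+1}.$$
Matching coefficients of $X^q$ gives $\phi_{32}=0$ and $\lambda=\mu^{q+1}s$. With $\phi_{32}=0$, the substitution $\alpha\mapsto\alpha+\sqrt{t}$ (square roots being unique in characteristic $2$) turns $t+\alpha^2$ into $\alpha^2$ and $\phi_{12}+\alpha$ into $c_1+\alpha$, where $c_1:=\phi_{12}+\sqrt{t}$, and factoring $\mu s$ out of each linear factor rewrites $F_b\circ\phi$ as $\lambda Z\prod_\alpha\bigl(X+\tfrac{c_1+\alpha}{\mu s}Y+\tfrac{\alpha^2}{s}Z\bigr)+bY^{q+1}$. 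Equating with $\lambda\bigl(Z\prod_\alpha(X+\alpha Y+\alpha^2Z)+aY^{q+1}\bigr)$, unique factorization of these products of distinct linear forms forces the multiset $\{(\tfrac{c_1+\alpha}{\mu s},\tfrac{\alpha^2}{s})\}_{\alpha\in\mathbb{F}_q}$ to equal $\{(\alpha,\alpha^2)\}_{\alpha\in\mathbb{F}_q}$; since first coordinates are distinct in each, this yields $\bigl(\tfrac{c_1+\alpha}{\mu s}\bigr)^2=\tfrac{\alpha^2}{s}$ for every $\alpha\in\mathbb{F}_q$, and reading off the coefficient of $\alpha^2$ and the constant term gives $\mu^2s=1$ and $c_1=0$. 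Since $s\in\mathbb{F}_q^\times$, the first identity forces $\mu\in\mathbb{F}_q^\times$, so $\lambda=\mu^{q+1}s=\mu^{q-1}=1$, and comparing coefficients of $Y^{q+1}$ finally gives $b=\lambda a=a$.

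The step I expect to be the main obstacle is this last coefficient comparison: one must check with care that, after the substitution $\alpha\mapsto\alpha+\sqrt t$ and the rescaling by $\mu s$, the product factor of $F_b\circ\phi$ becomes the model product $\prod_\alpha(X+\alpha Y+\alpha^2Z)$ \emph{with the same indexing by $\mathbb{F}_q$}, so that no spurious family of solutions with $\lambda\neq1$ survives; several characteristic-$2$ facts ($\prod_{\alpha\in\mathbb{F}_q^\times}\alpha=1$, perfectness of $\mathbb{F}_q$, vanishing of the symmetric power sums that occur) get used along the way and should be invoked explicitly.
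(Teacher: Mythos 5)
Your proof is correct, and it shares the paper's central idea --- that the inner Galois points of every smooth member of the family (\ref{d-Galois}) form the same explicit set $\mathcal{G}$ of $\mathbb{F}_q$-rational points of the line $Y=0$, so that $\phi$ permutes $\mathcal{G}$ and can be normalized by composing with automorphisms of $C_b$ --- but the endgame is genuinely different. The paper pushes the normalization all the way: using elements of $G_{P_j}(C_b)$, of $G_{P_1}(C_b)$ and of the cyclic group $H(C_b)$ from Section 3, it arranges that $\phi$ fixes the five points $P_1,P_2,P_3,Q_2,Q_3$ (the last two being intersections of tangent lines, which are preserved because $T_{P_i}C_a=T_{P_i}C_b$), whence $\phi$ is the identity and $a=b$ is immediate from $F_b=\lambda F_a$. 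You stop the normalization after fixing a single point and replace the rest by an explicit comparison of $F_b\circ\phi$ with $\lambda F_a$: the $X^q$-coefficient kills $\phi_{32}$ and gives $\lambda=\mu^{q+1}s$, the substitution $\alpha\mapsto\alpha+\sqrt{t}$ plus unique factorization of the product of distinct linear forms forces $c_1=0$ and $\mu^2s=1$, hence $\lambda=\mu^{q-1}=1$, and the $Y^{q+1}$-coefficient gives $b=\lambda a=a$. Your route costs a longer computation but avoids $H(C_b)$ and the tangent-line bookkeeping, and it has the merit of treating the case $a=1$ or $b=1$ (allowed by the hypothesis $a,b\in K\setminus 0$), which the paper's argument silently excludes since its Galois-point machinery needs smoothness. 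On the point you flag as the main obstacle: the cleanest resolution is to note that after $\phi_{32}=0$ every monomial of $\lambda Z\prod_{\alpha}(\cdots)$ is divisible by $Z$, so comparing $Y^{q+1}$-coefficients yields $b=\lambda a$ at once, the two degree-$q$ products must then be equal as polynomials, and only afterwards does one invoke unique factorization and match first coordinates; this sidesteps any worry about spurious solutions with $\lambda\neq 1$.
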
 

\begin{proof} 
Let $P_1, \ldots, P_d$ be inner Galois points for $C_a$, which are contained in a line defined by $Y=0$. 
Then, $P_1, \ldots P_d$ are also inner Galois for $C_b$. 
Since the tangent lines $T_PC_a$ and $T_PC_b$ at $P=(\alpha^2:0:1)$ with $\alpha \in \mathbb F_q$ are given by the same equation $X+\alpha Y+\alpha^2Z=0$, $T_{P_i}C_a=T_{P_i}C_b$ for $i=1, \ldots, d$. 
We may assume that $P_1=(1:0:0)$, $P_2=(0:0:1)$ and $P_3=(1:0:1)$. 
Let $Q_2=(0:1:0)$ and let $Q_3=(1:1:0)$. 
Then, $T_{P_1}C_a \cap T_{P_2}C_a=T_{P_1}C_b \cap T_{P_2}C_b=\{Q_2\}$ and $T_{P_1}C_a \cap T_{P_3}C_a=T_{P_1}C_b \cap T_{P_3}C_b=\{Q_3\}$. 
Let $\phi$ be a linear transformation such that $\phi(C_a)=C_b$. 

If $\phi(P_1)=P_i$ for some $i \ne 1$, then we take $\sigma \in G_{P_j}(C_b)$ for some $j$ such that $\sigma(P_i)=P_1$, by Fact \ref{Galois covering}(2). 
Then, $\sigma \circ \phi(P_1)=P_1$. 
Therefore, there exists a linear transformation $\phi$ such that $\phi(C_a)=C_b$ and $\phi(P_1)=P_1$. 
If $\phi(P_2)=P_i$ for some $3 \le i \le d$, then we take $\tau \in G_{P_1}(C_b)$ such that $\tau(P_3)=P_2$, by Fact \ref{Galois covering}(2). 
Then, $\tau \circ \phi(P_2)=P_2$. 
Therefore, there exists a linear transformation $\phi$ such that $\phi(C_a)=C_b$, $\phi(P_1)=P_1$ and $\phi(P_2)=P_2$. 
If $\phi(P_3)=P_i$ for some $4 \le i \le d$, then we take $\gamma \in H(C_b)$ such that $\gamma(P_i)=P_3$, where $H(C_b)$ is a group for $C_b$ discussed in the previous section. 
Then, $\gamma\circ\phi(P_3)=P_3$. 
Therefore, there exists a linear transformation $\phi$ such that $\phi(C_a)=C_b$, $\phi(P_i)=P_i$ for $i=1, 2, 3$ and $\phi(Q_j)=Q_j$ for $j=2, 3$. 
We find that $\phi$ is identity by direct computations.  
Therefore, by considering the defining equations of $C_a$ and $C_b$, we should have $a=b$. 
\end{proof}

\begin{remark} \label{c=1}
If $c=1$, then a plane curve $C$ defined by Equation $(1c)$ is parameterized as 
$ \mathbb P^1 \rightarrow \mathbb P^2: (s:1) \mapsto (s^{q+1}:s^q+s:1). $
The distribution of Galois points for this curve has been settled in \cite{fukasawa4}. 
\end{remark}

\section{Proof of Theorem \ref{outerGP} (The case where $l \ge 3$)} 
If we have two outer Galois points, then we note the following (see Section 2). 

\begin{lemma} \label{moveGalois} 
Let $P, P_2$ be outer Galois points for $C$. 
Then, any element $\gamma \in G_P$ can be extended to a linear transformation of $\mathbb P^2$, and hence $\gamma(P_2) \in \mathbb P^2$ is also outer Galois for $C$. 
\end{lemma}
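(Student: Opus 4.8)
The plan is to prove Lemma~\ref{moveGalois} by combining two ingredients that have already been set up: first, the classical fact (cited in Section~2 via \cite[Appendix~A, 17 and 18]{acgh} and \cite{chang}) that any automorphism of a smooth plane curve of degree $d\ge 4$ extends to a linear transformation of $\mathbb P^2$; second, the observation that a linear transformation of $\mathbb P^2$ carries outer Galois points to outer Galois points. So the first step is to fix $\gamma\in G_P$ and invoke the extension result, obtaining a linear $\widetilde\gamma\in\mathrm{Aut}(\mathbb P^2)$ with $\widetilde\gamma|_C=\gamma$ and $\widetilde\gamma(C)=C$. Since $\gamma$ is an element of the Galois group of the projection $\pi_P$, it fixes the fibers of $\pi_P$, i.e. $\widetilde\gamma$ fixes the pencil of lines through $P$, which forces $\widetilde\gamma(P)=P$ (this is exactly the computation recalled in Section~2 that pins down the shape of $A_\gamma$; in particular $P$ is a fixed point of $\widetilde\gamma$).

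Next I would show that $\widetilde\gamma(P_2)$ is again an outer Galois point. Because $\widetilde\gamma$ is a linear automorphism of $\mathbb P^2$ preserving $C$, it induces an isomorphism of the projections $\pi_{P_2}$ and $\pi_{\widetilde\gamma(P_2)}$: concretely, $\pi_{\widetilde\gamma(P_2)}\circ\widetilde\gamma = \phi\circ\pi_{P_2}$ for a suitable isomorphism $\phi:\mathbb P^1\to\mathbb P^1$ (namely the map induced on the pencil of lines through $P_2$ by $\widetilde\gamma$), and $\widetilde\gamma|_C$ is an automorphism of $C$. Hence the field extension $K(C)/\pi_{\widetilde\gamma(P_2)}^*K(\mathbb P^1)$ is isomorphic to $K(C)/\pi_{P_2}^*K(\mathbb P^1)$, so one is Galois iff the other is; since $P_2$ is Galois, so is $\widetilde\gamma(P_2)$. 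Finally, $\widetilde\gamma(P_2)\notin C$ because $\widetilde\gamma(C)=C$ and $P_2\notin C$, so $\widetilde\gamma(P_2)$ is an \emph{outer} Galois point, as claimed.

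The only genuinely delicate point is making the identification $G_P\cong\mathrm{Gal}(K(C)/\pi_P^*K(\mathbb P^1))$ interact correctly with the extension: one must be sure that the \emph{abstract} automorphism $\gamma$ of $C$ and its geometric linear extension $\widetilde\gamma$ are the same map on $C$, so that conjugating $\pi_{P_2}$ by $\widetilde\gamma$ really does transport the Galois property. But this is precisely the content of the cited extension theorem together with the uniqueness of the linear representative (a plane automorphism of $\mathbb P^2$ fixing $d\ge 4$ collinear... actually fixing $C$ pointwise would be the identity), so no new argument is needed — everything is assembled from Section~2. I therefore expect the write-up to be short, essentially a one-paragraph verification; the ``hard part'' is only in being careful that all the cited facts are applied to the right objects, not in any new computation.

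\begin{proof}
Fix $\gamma\in G_P$. Identifying $G_P$ with a subgroup of $\mathrm{Aut}(C)$ as in Section~2, the automorphism $\gamma$ extends to a linear transformation $\widetilde\gamma$ of $\mathbb P^2$ (see \cite[Appendix~A, 17 and 18]{acgh} or \cite{chang}), and $\widetilde\gamma(C)=C$. As observed in Section~2 (the computation determining the shape of $A_\gamma$), since $\gamma\in G_P$ we have $\widetilde\gamma(P)=P$; in any case $\widetilde\gamma$ preserves the pencil of lines through $P$, and more generally $\widetilde\gamma$ sends the pencil of lines through $P_2$ to the pencil of lines through $\widetilde\gamma(P_2)$, inducing an isomorphism $\phi:\mathbb P^1\to\mathbb P^1$ with $\pi_{\widetilde\gamma(P_2)}\circ\widetilde\gamma=\phi\circ\pi_{P_2}$ on $C$. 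Hence the extension $K(C)/\pi_{\widetilde\gamma(P_2)}^*K(\mathbb P^1)$ is carried isomorphically onto $K(C)/\pi_{P_2}^*K(\mathbb P^1)$ by $\widetilde\gamma|_C\in\mathrm{Aut}(C)$ together with $\phi$, so one is Galois precisely when the other is. Since $P_2$ is Galois, so is $\widetilde\gamma(P_2)$. Finally $\widetilde\gamma(P_2)\notin C$, because $P_2\notin C$ and $\widetilde\gamma(C)=C$; thus $\widetilde\gamma(P_2)$ is an outer Galois point for $C$.
\end{proof}
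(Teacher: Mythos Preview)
Your proof is correct and follows essentially the same approach as the paper: the paper does not give a separate proof of this lemma but simply refers to Section~2, where the extendability of $\gamma\in G_P$ to a linear transformation of $\mathbb P^2$ (via \cite[Appendix~A, 17 and 18]{acgh} or \cite{chang}) is already recorded. You have spelled out the routine verification that a linear automorphism of $\mathbb P^2$ preserving $C$ carries outer Galois points to outer Galois points, which the paper leaves implicit.
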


Let $d=p^el$, where $e \ge 1$, $l \ge 3$ and $l$ divides $p^e-1$, and let $P=(1:0:0)$ be an outer Galois point.  
It follows from a generalization of Pardini's theorem by Hefez \cite[(5.10) and (5.16)]{hefez} and Homma \cite{homma2} that the generic order of contact for $C$ is equal to $2$, i.e. $I_R(C, T_RC) =2$ for a general point $R \in C$ (see also \cite{hefez-kleiman, homma1}).   

Let $M \subset \mathbb P^2$ be a projective line with $P \in M$. 
We consider a homomorphism $r_P[M]: G_P \rightarrow {\rm Aut}(M)$, which is induced from the restriction.
Then, the cardinality of the kernel ${\rm Ker} \ r_P[M]$ is a power of $p$.  
We denote it by $p^{v[M]}$. 
Since the kernel ${\rm Ker} \ r_P[M]$ is a subspace of $G_P$ as $\mathbb F_p$-vector spaces, we have the following diagram. 
$$\begin{array}{cccccccccc}
  &  & (\mathbb Z/p\mathbb Z)^{\oplus v[M]} & \cong & {\rm Ker} \ r_P[M] \\
  &  & \downarrow & & \downarrow \\
0 & \rightarrow & (\mathbb Z/p\mathbb Z)^{\oplus e} & \rightarrow & G_P & \rightarrow & \langle \zeta \rangle & \rightarrow & 1 \\
  &  & \downarrow & & \downarrow & & \parallel \\
0 & \rightarrow & (\mathbb Z/p\mathbb Z)^{\oplus e-v[M]} & \rightarrow & {\rm Im} \ r_P[M] & \rightarrow & \langle \zeta \rangle & \rightarrow & 1 
\end{array} $$
Using lower splitting exact sequence as groups, we have the following. 

\begin{lemma} \label{fixed order}
The integer $l$ divides $p^{e-v[M]}-1$ for any line $M$ with $P \in M$. 
\end{lemma}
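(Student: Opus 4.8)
The plan is to analyze the lower exact sequence
$$ 0 \rightarrow (\mathbb Z/p\mathbb Z)^{\oplus e-v[M]} \rightarrow {\rm Im}\ r_P[M] \rightarrow \langle \zeta \rangle \rightarrow 1, $$
which (as noted in the excerpt) splits as a sequence of groups, so that ${\rm Im}\ r_P[M] \cong (\mathbb Z/p\mathbb Z)^{\oplus e-v[M]} \rtimes \langle \zeta\rangle$ sits inside ${\rm Aut}(M) \cong {\rm Aut}(\mathbb P^1)$. The idea is to run, for the line $M$, exactly the structural argument that produced Theorem \ref{group}: a finite subgroup of ${\rm Aut}(\mathbb P^1)$ of this shape forces a compatibility between $p$ and $l$, namely that $l$ divides $p^{e-v[M]}-1$.

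Concretely, I would first observe that the normal $p$-subgroup $N := {\rm Im}\ r_P[M] \cap {\rm Ker}(\text{reduction to } \langle\zeta\rangle)$, being a nontrivial elementary abelian $p$-group of automorphisms of $\mathbb P^1$, has a unique common fixed point $P_0 \in M$ (a $p$-group acting on $\mathbb P^1$ in characteristic $p$ fixes a point, and an elementary abelian one acting faithfully fixes exactly one); if $N$ is trivial then $e=v[M]$ and the claim $l \mid p^0-1=0$ is to be read as the vacuous/trivial case, or rather one argues directly that ${\rm Im}\ r_P[M]\cong\langle\zeta\rangle$ is cyclic of order $l$ dividing... — here I must be slightly careful, so let me instead treat $N$ as possibly trivial uniformly: in all cases $N$ is the unique Sylow $p$-subgroup and its fixed locus in $\mathbb P^1$ consists of a single point $P_0$ when $N\neq 1$, and is all of $\mathbb P^1$ when $N=1$. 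Next, since $N$ is normal in ${\rm Im}\ r_P[M]$, the fixed locus of $N$ is preserved by a lift $\bar\tau$ of the generator $\zeta$ of $\langle\zeta\rangle$, so $\bar\tau(P_0)=P_0$; then by Lemma \ref{automorphisms of P^1}(3) there is a second point $P_1\neq P_0$ fixed by $\langle\bar\tau\rangle$ (when $N=1$ this is immediate since $\bar\tau$ has order $l$ not divisible by $p$ and hence two fixed points). Choosing coordinates on $M$ so that $P_0=(1:0)$ and $P_1=(0:1)$, each $\sigma\in N$ is upper-triangular unipotent, i.e. $\sigma(x:y)=(x+a(\sigma)y:y)$, while $\bar\tau$ is diagonal, $\bar\tau(x:y)=(\zeta x:y)$; conjugation gives $\bar\tau\sigma\bar\tau^{-1}(x:y)=(x+\zeta a(\sigma)y:y)$. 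Thus $\bar\tau$ acts on the $\mathbb F_p$-vector space $N\cong(\mathbb Z/p\mathbb Z)^{\oplus e-v[M]}$, identified with the additive subgroup $\{a(\sigma)\} \subset K$, by multiplication by $\zeta$.

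The key step is then purely arithmetic: the multiplicative order of $\zeta$ is $l$, and $\zeta$ acting by multiplication on the finite $\mathbb F_p$-subspace $V:=\{a(\sigma):\sigma\in N\}\subset K$ of dimension $e-v[M]$ makes $V$ a module over $\mathbb F_p[\zeta]=\mathbb F_{p^k}$; since $V$ is a nonzero $\mathbb F_{p^k}$-vector space, $k$ divides $e-v[M]$, equivalently — because $\langle\zeta\rangle\subset\mathbb F_{p^k}^{\times}$ and $k$ is the least such integer — $l$ divides $p^{e-v[M]}-1$. (The case $N=1$ must be handled separately: then ${\rm Im}\ r_P[M]$ is cyclic of order $l$ fixing $P_0$, and one checks $l$ divides $p^{e-v[M]}-1 = p^e-1$ directly from Theorem \ref{group}, where this divisibility was already recorded.) I expect the main obstacle to be precisely the careful bookkeeping in these degenerate cases — $N$ trivial, or the generator $\zeta$ trivial when $l=1$, which is excluded here since $l\ge 3$ — and making sure that the splitting of the lower sequence is genuinely used to produce an honest lift $\bar\tau$ of order exactly $l$ rather than merely order dividing $l\cdot p^{e-v[M]}$; once $\bar\tau$ of order $l$ is in hand, normalizing $N$, the rest is the same linear-algebra-over-a-finite-field computation that underlies Theorem \ref{group}.
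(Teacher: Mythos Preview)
Your approach is exactly the paper's: the paper's entire proof is the clause ``Using lower splitting exact sequence as groups,'' which amounts to rerunning the argument behind Theorem~\ref{group} for the image group ${\rm Im}\ r_P[M]\subset{\rm Aut}(M)$, and you have correctly unpacked that argument (conjugation by the order-$l$ lift $\bar\tau$ makes the $p$-part into an $\mathbb F_p[\zeta]$-module, forcing $k\mid e-v[M]$). One bookkeeping slip to clean up: in your final parenthetical on the case $N=1$ you write $p^{e-v[M]}-1=p^e-1$, but $N=1$ means $v[M]=e$, so this quantity is $p^0-1=0$ and $l\mid 0$ is automatic---exactly as you already noted earlier, so that parenthetical should simply be dropped.
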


Hereafter in this section, we assume that $P_2 \in \mathbb P^2 \setminus \{P\}$ is an outer Galois point for $C$. 

\begin{proposition} \label{fixed locus}
Assume that $l \ge 3$. 
Then: 
\begin{itemize} 
\item[(1)] $v[\overline{PP_2}]=e$, and there exists a unique point $Q \in \mathbb P^2$ with $Q \ne P$ such that $\gamma(Q)=Q$ for any $\gamma \in G_P$. 
\end{itemize}
Let $Q$ be the point as in $(1)$. Furthermore, we have the following. 
\begin{itemize}
\item[(2)] If $l \ge 5$, then $P_2=Q$.  
\item[(3)] If $l=4$ and $P_2 \ne Q$, then $Q \in C$ or there exist two outer Galois point $P_3, P_4$ such that $\gamma(P_4)=P_4$ for any $\gamma \in G_{P_3}$. 
\item[(4)] If $l=3$ and $P_2 \ne Q$, then $Q \in C$. 
\end{itemize}
\end{proposition}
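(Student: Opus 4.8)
The plan is to exploit that the Galois group of $\pi_P$ acts trivially on the target $\mathbb P^1$: by Fact \ref{Galois covering}(1) together with the matrix form of Section 2, every $\gamma\in G_P$ fixes $P$ and fixes each line through $P$, so in particular it fixes the line $M:=\overline{PP_2}$ as a set. Hence $G_P$ acts on $M\cong\mathbb P^1$ through $r_P[M]$, and by Lemma \ref{moveGalois} the whole $G_P$-orbit of $P_2$ consists of outer Galois points lying on $M$. Since $P=(1:0:0)\in M$ and $M\neq\{X=0\}$, the element $\tau$ restricts on $M$ to an automorphism $\bar\tau$ of order $l$ whose fixed points are exactly $P$ and $Q:=M\cap\{X=0\}$ (so $Q\neq P$); and each $\sigma\in\mathcal K_P$, being an elation of $\mathbb P^2$ with centre $P$, restricts on $M$ either to the identity (when $L_\sigma=M$) or to an elation of $M$ fixing only $P$, so $\mathcal K_P$ acts on $M$ as a group of translations fixing $P$, of order $p^{e-v[M]}$.

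For $(1)$: suppose $v[M]<e$. Then the $\mathcal K_P$-orbit of $P_2$ lies in $M\setminus\{P\}$ and has cardinality $p^{e-v[M]}\ge p$, so $M$ carries at least $p+1\ge 3$ distinct outer Galois points. The image $\mathcal G$ of $\langle G_P,G_{P_2}\rangle$ in $\mathrm{Aut}(M)=\mathrm{PGL}_2(K)$ is finite (as $\mathrm{Aut}(C)$ is, the genus being $(d-1)(d-2)/2\ge 3$) and contains the elations centred at $P$ coming from $\mathcal K_P|_M$, together with — when $v_{P_2}[M]<e$ — an elation centred at $P_2\neq P$ (if $v_{P_2}[M]=e$ one argues instead with the cyclic group $G_{P_2}|_M$ of order $l$). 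A case analysis of $\mathcal G$ through Dickson's classification, combined with the induced action of $\mathcal G$ on the finite $\mathcal G$-stable set $C\cap M$ and on $C$ itself, then yields a contradiction with $d=p^el$, $l\ge 3$. Hence $v[M]=e$; consequently $L_\sigma=M$ for every $\sigma\in\mathcal K_P\setminus\{1\}$, so the locus of points fixed by all of $G_P$ equals $\mathrm{Fix}(\tau)\cap M=(\{P\}\cup\{X=0\})\cap M=\{P,Q\}$, which produces the required unique point $Q\neq P$.

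For $(2)$, $(3)$, $(4)$: by $(1)$ we may take $v[M]=e$, so $G_P|_M=\langle\bar\tau\rangle\cong\mathbb Z/l$ with fixed points $P$ and $Q$. Assume $P_2\neq Q$. Then $P_2$ is not $\bar\tau$-fixed, so its $\langle\bar\tau\rangle$-orbit has size $l$ and $M$ carries at least $l+1$ distinct outer Galois points; moreover $\mathcal G\subset\mathrm{PGL}_2(K)$ contains the cyclic subgroups $G_P|_M$ and $G_{P_2}|_M$ of order $l$, with fixed-point pairs $\{P,Q\}$ and $\{P_2,Q_2\}$. These pairs cannot coincide (that would give $P_2=Q$), so $\mathcal G$ is not cyclic and fixes at most one point of $M$; by Dickson it is dihedral, $A_4$, $S_4$, $A_5$, of type $(\mathbb Z/p)^{\oplus s}\rtimes\mathbb Z/m$, or of type $\mathrm{PSL}_2(q)/\mathrm{PGL}_2(q)$. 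In the dihedral case both order-$l$ subgroups lie in the rotation part, again forcing $\{P,Q\}=\{P_2,Q_2\}$, a contradiction. For the other types one passes to the action on $C$: when $l\ge 5$ each remaining type is impossible (the abelian-by-cyclic one because its unique fixed point must be $Q$, which then leads to a contradiction along $M$, and the $A_5$-with-$l=5$ and linear ones because they make $\mathrm{Aut}(C)$ too large for degree $p^el$ with $l\ge 5$), which proves $(2)$; when $l=3$ or $l=4$ each remaining type forces $Q$ to be a point of $C$ fixed by all of $G_P$, i.e.\ a total ramification point of $\pi_P$, giving the conclusion of $(4)$ and part of $(3)$ — except that the $S_4$- or dihedral possibility with $l=4$ instead yields two further outer Galois points $P_3$, $P_4$ with $P_4$ fixed by all of $G_{P_3}$, which is the second alternative of $(3)$.

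The hard part is this last step: determining precisely which finite subgroups of $\mathrm{PGL}_2(K)$ occur as $\mathcal G$, and for each one converting the action on the line $M$ into information about $C$ — the set $C\cap M$, the common ramification index of the projections along it, and the tangent lines at the points of $C\cap M$ — so as to produce either an outright contradiction (automatic once there are too many collinear outer Galois points, e.g.\ whenever $l\ge 5$) or the sharper conclusion that $Q\in C$ (or, for $l=4$, the $P_3,P_4$ configuration). The cases $l=3,4$ are delicate precisely because the bare count of collinear outer Galois points is not by itself contradictory there.
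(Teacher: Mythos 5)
Your setup is sound and matches the paper's: restricting $G_P$ to $M=\overline{PP_2}$, observing that the $\mathcal K_P$-orbit (resp.\ $\langle\bar\tau\rangle$-orbit) of $P_2$ produces $p^{e-v[M]}+1$ (resp.\ $l+1$) collinear outer Galois points via Lemma \ref{moveGalois}, and identifying the common fixed locus of $G_P$ as $\{P,Q\}$ once $v[M]=e$. But every actual contradiction in your argument is deferred to an unexecuted ``Dickson classification of $\mathcal G\subset\mathrm{PGL}_2(K)$ plus its action on $C$,'' and this is precisely the content of the proposition. Group theory on the line $M$ alone cannot rule out these configurations: you never invoke any quantitative geometric property of $C$ that would be violated, and your parenthetical claim that a contradiction is ``automatic once there are too many collinear outer Galois points'' is exactly the statement that needs proof. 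The paper supplies the missing input in two concrete ways. First, for part (1) it splits into cases: if some $\gamma\in G_P\setminus\mathcal K_P$ fixes $P_2$, then the $d$ points of $C\cap L_\gamma$ all have tangent lines through $P$, and an element $\sigma\in\mathcal K_{P_2}$ permuting them forces $\sigma(P)\in T_{\sigma(R_1)}C\cap T_{\sigma(R_2)}C=\{P\}$, contradicting $v<e$. Otherwise it counts $N=p^{e-v}l+1$ outer Galois points on $M$, notes (via Fact \ref{Galois covering}(3)) that each one contributes at least $(d-1)p^e(l-2)$ to the Wronskian divisor through the high-order flexes on $C\cap L_\tau$, and feeds this into the St\"ohr--Voloch bound $3d(d-2)$ to get $N(l-2)<3l$; combined with $p^{e-v}-1\ge l$ (Lemma \ref{fixed order}) this gives $(l^2+l+1)(l-2)<3l$, a contradiction. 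The same inequality with $N=l+1$ yields $l\le 4$, i.e.\ part (2). Nothing in your proposal plays the role of this Wronskian estimate, and without it I do not see how the $A_5$, $\mathrm{PSL}_2(q)$, or elementary-abelian-by-cyclic cases of Dickson's list would be excluded (``makes $\mathrm{Aut}(C)$ too large'' is not justified by any stated bound, and large automorphism groups do occur for plane curves of these degrees, e.g.\ Hermitian curves).

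For parts (3) and (4) you acknowledge that the bare count is not contradictory, but you again only assert that the remaining Dickson types ``force $Q\in C$'' or produce the $P_3,P_4$ configuration. The paper does not classify $\mathcal G$ at all here; instead, assuming $Q\notin C$, it observes that $C\cap M$ consists of exactly $l$ points cyclically permuted by $\bar\tau$, picks $\eta_j\in G_{P_j}$ with $\eta_j(R_1)=R_2$ for each of the $l+1$ Galois points $P_j$ on $M$, and runs an explicit permutation analysis using Lemma \ref{automorphisms of P^1}(1): for $l=3$ this forces $\bar\eta=\bar\tau$ and hence $\tau(P_2)=P_2\ne Q$, a contradiction; for $l=4$ the order-$4$ and order-$2$ subcases lead respectively to a contradiction or to the pair $P_3,P_4$ with $\gamma(P_4)=P_4$ for all $\gamma\in G_{P_3}$. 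This elementary combinatorial step is absent from your write-up. In short, the skeleton is right but the load-bearing arguments --- the tangent-line intersection trick, the St\"ohr--Voloch/Wronskian count, and the $l=3,4$ permutation analysis --- are all missing, so the proof as proposed does not go through.
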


\begin{proof} 
Let $\gamma \in G_P \setminus \mathcal{K}_P$ and let $L_{\gamma}$ be the line, which is a fixed locus, defined as in Section 2. 
The set $C \cap L_{\gamma}$ consists of $d$ points, because $T_RC=\overline{PR}\ne L_{\gamma}$ if $R \in C \cap L_{\gamma}$ by Fact \ref{Galois covering}(3) and Lemma \ref{index}(2). 
Let $\tau \in \mathcal{Q}_P$ be a generator and let $L_{\tau}$ be the line, defined as in Section 2.  
We denote $v[\overline{PP_2}]$ by $v$ and assume that $v<e$. 

We consider the case where $\gamma(P_2)=P_2$ for some $\gamma \in G_P\setminus \mathcal{K}_P$. 
Let $\sigma \in \mathcal{K}_{P_2}$. 
Then, $\sigma(R) \in L_{\gamma}$ and $\sigma(R) \ne R$ if $R \in C \cap L_{\gamma}$, by Fact \ref{Galois covering}(1)(3) and that $L_{\gamma}$ consists of exactly $d$ points. 
Furthermore, $\sigma(P) \in T_{\sigma(R_1)}C \cap T_{\sigma(R_2)}C=\{P\}$, if $R_1, R_2 \in C \cap L_{\gamma}$ with $R_1 \ne R_2$.  
Therefore, we should have $\sigma(P)=P$. 
This is a contradiction to $v < e$.  

We consider the case where $\gamma(P_2) \ne P_2$ for any $\gamma \in G_P \setminus \mathcal{K}_P$. 
Assume that $\gamma_1(P_2)=\gamma_2(P_2)$ for $\gamma_1, \gamma_2 \in G_P$. 
Note that any element $\gamma \in G_P$ is represented as $\gamma=\sigma \tau^i$ for some $\sigma \in \mathcal{K}_P$ and some $i$ (see Section 2). 
Let $\gamma_1=\sigma_1\tau^i$ and $\gamma_2=\sigma_2\tau^j$, where $\sigma_1, \sigma_2 \in \mathcal{K}_P$. 
Since $(\tau^{-j}\sigma_2^{-1}\sigma_1\tau^j)\tau^{i-j}(P_2)=P_2$ and $\tau^{-j}\sigma_2^{-1}\sigma_1\tau^j \in \mathcal{K}_P$, we have $i=j$ and $\gamma_2^{-1} \gamma_1 \in \mathcal{K}_P$.  
Furthermore, we have $\gamma_2^{-1}\gamma_1 \in {\rm Ker} \ r_P[\overline{RP}]$, since $\gamma_2^{-1}\gamma_1(P)=P$ and $\gamma_2^{-1}\gamma_1(P_2)=P_2$. 
Therefore, we have $p^{e-v}l+1$ outer Galois points on the line $\overline{PP_2}$, by Lemma \ref{moveGalois} and that the group ${\rm Im} \ r_P[\overline{PP_2}]$ is isomorphic to $(\mathbb Z/p\mathbb Z)^{\oplus e-v} \rtimes \langle \zeta \rangle$.   

Let $R \in C \cap L_{\tau}$.  
We consider points on the line $\overline{PR}$. 
Let $\sharp G_P(R)=p^bl$, where $G_P(R)$ is the stabilizer subgroup at $R$. 
Then we have $p^{e-b}$ flexes of order $(\sharp G_P(R)-2)$ by Fact \ref{Galois covering}(3). 
We note that $(p^{b}l-2)p^{e-b} \ge p^e(l-2)$. 
Furthermore, for each outer Galois points, we spent at least degree $(d-1)(p^e(l-2))$ as the degree of the Wronskian divisor. 
Therefore, it follows from the degree of Wronskian divisor (\cite[Theorem 1.5]{stohr-voloch}) that
$$ (p^{e-v}l+1)(d-1)p^e(l-2) \le 3d(d-2). $$
Then, we have
$$ (p^{e-v}l+1)p^e(l-2) < 3d =3p^el.$$
Therefore, $(p^{e-v}l+1)(l-2)-3l<0 $. 
Note that $p^{e-v}-1 \ge l$ by Lemma \ref{fixed order}. 
Then, $(l^2+l+1)(l-2)-3l <0$. 
This is a contradiction. 
Therefore, $v=e$. 

In particular, the group ${\rm Im} \ r_P[\overline{PP_2}]$ is a cyclic group of order $l$.  
By Lemma \ref{automorphisms of P^1}(3) in Section 2, a fixed point by the group ${\rm Im} \ r_P[\overline{PP_2}]$ which is different from $P$  is uniquely determined.
We denote it by $Q$.    
Then, $\gamma(Q)=Q$ for any $\gamma \in G_P$, since $\gamma=\sigma\tau^i$ for some $\sigma \in \mathcal{K}_P$ and some $i$.  
We have (1).

We prove (2). 
Assume that $P_2 \ne Q$. 
Since the group ${\rm Im} \ r_P[\overline{PP_2}]$ is a cyclic group of order $l$, we have $l+1$ outer Galois points on the line $\overline{PP_2}$, by Lemma \ref{moveGalois}. 
Furthermore, for each outer Galois point, we spent at least degree $(d-1)p^e(l-2)$ as the degree of the Wronskian divisor, similarly to the proof of (1). 
Therefore, it follows from the degree of Wronskian divisor (\cite[Theorem 1.5]{stohr-voloch}) that
$$ (l+1)(d-1)p^e(l-2) \le 3d(d-2). $$
Then, we have
$$ (l+1)p^e(l-2) <3d=3p^el.$$
Therefore, $(l+1)(l-2)-3l<0 $. 
Then, $l^2-4l-2 <0$. 
We have $l \le 4$.

We prove (3). 
Assume that $P_2 \ne Q$ and $Q \not\in C$. 
Since ${\rm Im} \ r_P[\overline{PP_2}]$ is a cyclic group of order $l$, the cardinality of $C \cap \overline{PP_2}$ is equal to $l$ and there exists $l+1$ outer Galois points on $\overline{PP_2}$, by Fact \ref{Galois covering}(3), Lemma \ref{moveGalois} and the assumption.  
Let $C \cap \overline{PP_2}=\{R_1, \ldots, R_l\}$ and let $P, P_2, \ldots, P_{l+1}$ be outer Galois points.

Let $l=4$.  
The restriction $r_P[\overline{PP_2}](\tau)$ of the generator $\tau \in \mathcal{Q}_P$ is a generator of ${\rm Im} \ r_P[\overline{PP_2}]$.
We may assume that $\tau(R_i)=R_{i+1}$ for $i=1, 2, 3, 4$, where $R_{5}=R_1$.   
We can take $\eta_j \in {\rm Im} \ r_{P_j}[\overline{PP_2}]$ such that $\eta_j(R_1)=R_2$ for $j=2, 3, 4, 5$ by Fact \ref{Galois covering}(2). 
We consider the case where at least three elements of $\{\eta_j\}$ are of order $4$. 
We may assume that $\eta_2$, $\eta_3$, $\eta_4$ are of order $4$. 
Assume that $\eta_j(R_2)=R_4$ for any $j$ with $2 \le j \le 4$. 
Then, we have $\eta_j(R_4)=R_3$. 
Since three points on the line $\overline{PP_2}$ has the same images under $\eta_2, \eta_3, \eta_4$, these are the same automorphism of the line $\overline{PP_2}$ by Lemma \ref{automorphisms of P^1}(1). 
Then, $\eta_j$ fixes $P_2, P_3, P_4$ for $j=2, 3, 4$, because $\eta_j(P_j)=P_j$.  
This implies that $\eta_j$ is identity on $\overline{PP_2}$, by Lemma \ref{automorphisms of P^1}(1). 
This is a contradiction. 
Therefore, there exists $j$ such that $\eta_j(R_2)=R_3$. 
Then, we have $\eta_j(R_3)=R_4$. 
Therefore, $\tau$ coincides with $\eta_j$ on the line $\overline{PP_2}$. 
Then, $\tau(P_j)=\eta_j(P_j)=P_j \ne Q$.  
This implies that $\tau$ fixes $P_1, P_j$ and $Q$. 
This is a contradiction. 

We consider the case where there exist distinct $j, k$ such that $\eta_j$ and $\eta_k$ is of order $2$. 
Then, $\eta_j(R_2)=R_1$, $\eta_j(R_3)=R_4$ and $\eta_j(R_4)=R_3$. 
This holds also for $\eta_k$. 
Then $\eta_j=\eta_k$ on the line $\overline{PP_2}$ by Lemma \ref{automorphisms of P^1}(1). 
Then, $\eta_j(P_k)=\eta_k(P_k)=P_k$. 
Since the group ${\rm Im} \ r_{P_j}[\overline{PP_2}]$ is cyclic, $\eta(P_k)=P_k$ for any $\eta \in {\rm Im} \ r_{P_j}[\overline{PP_2}]$, by Lemma \ref{automorphisms of P^1}(3). 
If we take $j=3$ and $k=4$, then we have the conclusion, since any element of $G_{P_j}$ is a product of elements of $\mathcal{K}_{P_j}$ and of $\mathcal{Q}_{P_j}$. 

We prove (4). 
Let $l=3$. 
Assume that $P_2 \ne Q$ and $Q \not\in C$. 
We may assume that $\tau \in G_P$ satisfies that $\tau(R_i)=R_{i+1}$ for $i=1,2, 3$, where $R_{4}=R_1$.   
We can take $\eta \in G_{P_2}$ such that $\eta(R_1)=R_2$, by Fact \ref{Galois covering}(2).
Then, we have $\eta(R_2)=R_3$ and $\eta(R_3)=R_1$.  
This implies that $\tau$ coincides with $\eta$ on $\overline{PP_2}$, by Lemma \ref{automorphisms of P^1}(1). 
Therefore, $\tau(P_2)=\eta(P_2)=P_2 \ne Q$. 
This is a contradiction. 
\end{proof}

Let $Q \in \mathbb P^2 \setminus \{P\}$ be the point such that $\gamma(Q)=Q$ for any $\gamma \in G_P$, as in Proposition \ref{fixed locus}.
We may assume that $Q=(0:1:0)$ for a suitable system of coordinates. 
Then, the line $\overline{PQ}=\overline{PP_2}$ is defined by $Z=0$. 
Using Proposition \ref{fixed locus}(1), we can determine the defining equation of $C$, as follows. 

\begin{proposition}\label{equation1}
The curve $C$ is projectively equivalent to a plane curve whose defining equation  is of the form $ f(x,y)=(\sum_{0 \le m \le e} \alpha_mx^{p^m})^l+h(y)=0, $
where $\alpha_{e}, \ldots, \alpha_0 \in K$ and $h(y) \in K[y]$ is a polynomial. Furthermore, $\alpha_e\alpha_0 \ne 0$, the derivative $h'(y)$ is of degree $d-2$, and polynomials $h(y)$ and $h'(y)$ do not have a common root. 
\end{proposition}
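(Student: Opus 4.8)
The plan is to compute the action of $G_P$ on the function field $K(x,y)=K(C)$, extract from it a $G_P$-invariant that generates $K(y)$ over $K$, read off the defining equation of $C$, and then use the smoothness of $C$ to pin down $h$.

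By the normalization preceding the statement, $P=(1:0:0)$, $Q=(0:1:0)$ and $\overline{PQ}=\{Z=0\}$, and by Proposition \ref{fixed locus}(1) every $\gamma\in G_P$ fixes $P$ and $Q$. Combining this with the description of $G_P$ in Section 2, each $\sigma\in\mathcal{K}_P$ acts on $K(x,y)$ by $x\mapsto x+c_\sigma$, $y\mapsto y$ for some $c_\sigma\in K$; the map $\sigma\mapsto c_\sigma$ is an injective homomorphism, so its image $V\subset(K,+)$ is an $\mathbb{F}_p$-subspace of dimension $e$, of cardinality $p^e$. After one further coordinate change fixing $P$, $Q$ and the line $Z=0$, a generator $\tau$ of $\mathcal{Q}_P$ acts by $x\mapsto\zeta x$, $y\mapsto y$ with $\zeta$ a primitive $l$-th root of unity; since $\tau$ normalizes $\mathcal{K}_P$ and conjugation by $\tau$ carries the translation by $c$ to the translation by $\zeta c$, we get $\zeta V=V$.

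Next I would set $\varphi(x):=\prod_{c\in V}(x+c)$. By \cite[Proposition 1.1.5 and Theorem 1.2.1]{goss} this polynomial is $\mathbb{F}_p$-linear, so $\varphi(x)=\sum_{0\le m\le e}\alpha_m x^{p^m}$ with $\alpha_e=1$ and $\alpha_0=\prod_{c\in V\setminus\{0\}}c\ne 0$, whence $\alpha_e\alpha_0\ne 0$. As $\zeta V=V$ and $l\mid p^e-1$ by Theorem \ref{group}, we have $\varphi(\zeta x)=\zeta^{p^e}\varphi(x)=\zeta\varphi(x)$, so $\tau$ sends $w:=\varphi(x)$ to $\zeta w$ and fixes $y$. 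Comparing degrees of subextensions of the Galois extension $K(x,y)/K(y)$ — using that $x$ is a root of the degree-$p^e$ polynomial $\varphi(T)-w\in K(y)(w)[T]$ — gives $K(x,y)^{\mathcal{K}_P}=K(y)(w)$, and Kummer theory for the faithful cyclic order-$l$ action of $\tau$ on $K(y)(w)$ then gives $K(x,y)^{G_P}=K(y)(w^l)$. Since $P$ is a Galois point, $K(x,y)^{G_P}=K(y)$, hence $w^l=\varphi(x)^l\in K(y)$. Writing $\varphi(x)^l=-h(y)$ and using that $[K(x,y):K(y)]=d$ while $\varphi(x)^l$ already homogenizes to degree $d$ with a constant leading $x$-coefficient, one sees $h$ is a polynomial with $\deg h\le d$ and that $\varphi(x)^l+h(y)$ defines $C$.

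Finally I would invoke smoothness. Since $\varphi$ is additive, $\varphi'(x)=\alpha_0$, so $f_x=l\alpha_0\varphi(x)^{l-1}$ and $f_y=h'(y)$; an affine point of $C$ is singular exactly when $\varphi(x)=0$ and $h(y)=h'(y)=0$, and as $\varphi(0)=0$ this is excluded precisely when $h$ and $h'$ have no common root. For the points of $C$ on $Z=0$, a direct computation with the homogenization $F$ shows that $F_X$ and $F_Y$ vanish identically along $Z=0$ (every surviving monomial acquires a coefficient divisible by $p$, using $p\mid d$), so $C$ is smooth there only if $F_Z\ne 0$; evaluation yields $F_Z|_{Z=0}=h_{d-1}Y^{d-1}$, and since $F(X,0,0)=\alpha_e^l X^{d}\not\equiv 0$ no point of $C$ on $Z=0$ has $Y=0$, so smoothness forces $h_{d-1}\ne 0$. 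Because $p\mid d$, the $y^{d-1}$-coefficient $dh_d$ of $h'$ vanishes, while the $y^{d-2}$-coefficient $(d-1)h_{d-1}$ is nonzero, so $\deg h'=d-2$. I expect the bookkeeping at the points on $Z=0$ to be the most delicate step; I note that the one configuration that would add an extra term to $F_Z|_{Z=0}$, namely $p^e=2$, does not arise here, since $l\mid p^e-1$ and $l\ge 3$ force $p^e\ge 4$.
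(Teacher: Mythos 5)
Your proposal is correct and follows essentially the same route as the paper: identify $\mathcal{K}_P$ with a $\zeta$-stable additive subgroup of $K$ acting by translations in $x$, form the additive polynomial $\varphi$ via \cite[Proposition 1.1.5 and Theorem 1.2.1]{goss}, take its $l$-th power to obtain a $G_P$-invariant lying in $K(y)$, and then derive $\deg h'=d-2$ and the coprimality of $h,h'$ from smoothness along $Z=0$ and at affine points. Your explicit handling of the $p^e=2$ edge case in the $F_Z$ computation is a small point the paper leaves implicit, but otherwise the arguments coincide.
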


\begin{proof} 
Let $\sigma \in \mathcal{K}_P$ and let $\tau \in \mathcal{Q}_P$ be a generator, as in Section 2. 
We may assume that $\tau^*(x)=\zeta x$ and $\tau^*y=y$ for $\tau^*: K(C) \rightarrow K(C)$, where $\zeta$ is a primitive $l$-th root of unity.  
Let $A_{\sigma}$ be a matrix representing $\sigma \in \mathcal{K}_P$ as in Section 2.  
Since $L_{\sigma}$ is defined by $Z=0$, the $(1,2)$-element of $A_{\sigma}$ is zero.  
Since the group $\mathcal{K}_P$ is a $\mathbb F_p(\zeta)$-vector space, we have a system of basis $b_1, \ldots, b_m$, where $km=e$. 
For any $\sigma \in \mathcal{K}_P$, the $(1,3)$-element of $A_{\sigma}$ is given by $\alpha_1b_1+\dots+\alpha_mb_m$ for some $(\alpha_1, \ldots, \alpha_m) \in \oplus^m\mathbb F_p(\zeta)$.  
We define 
$g_0(x)=\prod_{(\alpha_1, \ldots, \alpha_m)}(x+\Sigma_i \alpha_ib_i)$,  
where the subscript $(\alpha_1, \ldots, \alpha_m) \in \oplus^m\mathbb{F}_p(\zeta)$ is taken over all elements. 
Let $g=g_0^l$. 
Then, we find easily that $\gamma g(x)=g(x)$ for any element $\gamma \in G_P$. 
Therefore, there exists an element $h(y) \in K(y)$ such that $g(x)+h(y)=0$ in $K(C)$.
Then, $h(y)$ is a polynomial of degree at most $d$ by considering the degree of $C$. 
On the other hand, a set $\{\sum_i\alpha_ib_i|\alpha_i \in \mathbb F_p(\zeta)\} \subset K$, which consists of all roots of $g_0(x)=0$, forms an additive subgroup of $K$. 
According to \cite[Proposition 1.1.5 and Theorem 1.2.1]{goss},   
the polynomial $g_0$ has only terms of degree equal to some power of $p$, i.e. $g_0=\alpha_ex^{p^e}+\dots+\alpha_1x^p+\alpha_0x$ for some $\alpha_{e}, \ldots, \alpha_0 \in K$.  
Since $g_0$ is separable and has $p^e$ roots, we have $\alpha_e\alpha_0 \ne 0$. 

Finally, we prove that the degree of $h'(y)$ is $d-2$, and $h(y)$ and $h'(y)$ do not have a common root. 
Since $h(y)$ is of degree at most $d=p^el$, $h'(y)$ is of degree at most $d-2$. 
Let $F(X, Y, Z)=f(X/Z, Y/Z)Z^d$, $G_0(X, Z)=g_0(X/Z)Z^{p^e}$ and $H(Y, Z)=h(Y/Z)Z^d$. 
Then, $F_X=lG_0^{l-1}(\alpha_1Z^{p^e-1})$, $F_Y=H_Y$ and $F_Z=lG_0^{l-1}(\alpha_1XZ^{p^e-2})+H_Z$. 
We have $F_X(X, Y, 0)=0$. 
Since $d=p^el$, $F_Y(X, Y, 0)=H_Y(Y, 0)=0$. 
Assume that $h'(y)$ is of degree at most $d-3$. 
Then, $F_Z(X, Y, 0)=0+H_Z(Y, 0)=0$. 
Therefore, $C$ has singular points on the line defined by $Z=0$. 
This is a contradiction to the smoothness of $C$. 
On the other hand, if there exist $b \in K$ such that $h(b)=h'(b)=0$, then a point $(a:b:1)$ with $g_0(a)=0$ is a singular point. 
\end{proof}

\begin{lemma} \label{non-Galois} 
Let $C$ be a plane curve given by the equation as in Proposition $\ref{equation1}$.    
Then, $Q \in \mathbb P^2 \setminus C$ and $Q \ne P_2$. 
\end{lemma}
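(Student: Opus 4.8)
The statement to prove is that for a curve $C$ with defining equation $f(x,y)=(\sum_{0\le m\le e}\alpha_m x^{p^m})^l+h(y)=0$ as in Proposition~\ref{equation1}, the distinguished point $Q=(0:1:0)$ satisfies $Q\notin C$ and $Q\ne P_2$. The first part is almost a formal consequence of Proposition~\ref{equation1}: with $F(X,Y,Z)=Z^d f(X/Z,Y/Z)$ and $G_0(X,Z)=Z^{p^e}g_0(X/Z)$, $H(Y,Z)=Z^d h(Y/Z)$, one has $F=G_0^l+H$, and I would simply evaluate $F(0,1,0)$. The term $G_0(0,0)=0$ since $g_0$ has zero constant term (indeed all its terms have positive degree), so $G_0(0,1,0)^l=0$; and $H(0,1,0)$ is the leading coefficient of $h(y)$, which is nonzero because $h'(y)$ has degree exactly $d-2$ (Proposition~\ref{equation1}), forcing $\deg h=d$. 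Hence $F(0,1,0)\ne 0$, i.e. $Q\notin C$.

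For the second part, $Q\ne P_2$, the key point is that $P_2$ is by hypothesis an outer Galois point \emph{distinct from} $P$, and one must rule out the possibility $P_2=Q$. I would argue by contradiction: suppose $P_2=Q=(0:1:0)$. Then $Q$ is an outer Galois point, so by Theorem~\ref{group} applied to $Q$ there is a subgroup $G_Q\subset\mathrm{Aut}(C)$ of order $d=p^el$, and the projection $\pi_Q$ is $(x:y:1)\mapsto(x:1)$. The defining equation $f$ is, up to the $h(y)$ term, a polynomial in $x$ alone raised to the $l$-th power; the natural thing is to examine the ramification of $\pi_Q$ directly. Concretely, for a fixed generic value $x=x_0$, the fiber of $\pi_Q$ consists of the roots $y$ of $h(y)=-g_0(x_0)^l$, a polynomial of degree $d$ in $y$; I would show that this fiber is \emph{reduced} for general $x_0$ because $h$ and $h'$ have no common root and $\deg h'=d-2$ (so $h$ has at most $d-2$ ramification values as $x_0$ varies — wait, more carefully: the branch points come from $g_0(x_0)^l$ hitting a value where $h(y)+c$ has a multiple root, and there the ramification structure must be analyzed). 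The cleanest route is: a Galois cover $\pi_Q$ of degree $d=p^el$ with $p\nmid l$ must have, at every ramification point $R$, ramification index $e_R$ dividing $d$ and equal to $|G_Q(R)|$ (Fact~\ref{Galois covering}(3),(5)); but the fiber structure forced by $h(y)=\text{const}$ — where $h'$ has degree $d-2$, so a generic "bad" fiber has exactly one double point and $d-2$ simple points — is incompatible with the homogeneity required by Fact~\ref{Galois covering}(4) (all points in a fiber have the same $e_R$) unless $d=2$, contradicting $d\ge 4$.

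**The main obstacle.** The delicate step is the ramification bookkeeping for $\pi_Q$ in the $p\mid d$ situation: when $p\mid d$ the "$h'$ has degree $d-2$" input is exactly the genericity statement about the order of contact, and I must be careful that a multiple fiber of $\pi_Q$ over a value $c$ with $h(y)+c$ having a root of multiplicity $\mu$ corresponds to a point $R\in C$ with $e_R$ related to $\mu$ but possibly also inflated by the chain-rule contribution from $g_0(x_0)^l$ when $x_0$ is itself a critical value. I expect the right argument is not to chase the generic fiber but to combine: (a) Proposition~\ref{fixed locus}(1), which says $\gamma(Q)=Q$ for all $\gamma\in G_P$, so if $P_2=Q$ then $G_P$ fixes $P_2$, and by symmetry (applying Lemma~\ref{moveGalois} and Proposition~\ref{fixed locus}(1) with the roles of $P$ and $P_2=Q$ exchanged) $G_Q$ fixes $P$ as well; then $G_P$ and $G_Q$ both fix the two points $P,Q$ on the line $\overline{PQ}$ (which is $Z=0$), so by Lemma~\ref{automorphisms of P^1}(2) the restrictions of both groups to that line are cyclic of order prime to $p$, forcing $|\mathrm{Im}\,r_P[\overline{PQ}]|=l$ and hence $v[\overline{PQ}]=e$ for $Q$ too — consistent so far, so (b) I instead use the explicit matrix form: every $\gamma\in G_Q$ fixes $Q=(0:1:0)$ and acts on the fiber coordinate, and comparing the forced shape of $G_Q$ (analogue of the matrices in Section~2, now with the roles of the first and second coordinates swapped) against the defining equation $g_0(x)^l+h(y)=0$ shows $h(y)$ must itself be, up to the $l$-th power structure, an additive polynomial composed with a power — but $\deg h=d=p^el$ with $h'$ of degree $d-2=p^el-2$ is then impossible, since an $l$-th power $h_0(y)^l$ has derivative divisible by $h_0^{l-1}$ of degree $\ge (l-1)\cdot p^e$, contradicting $\deg h' = p^el-2$ once $l\ge 2$. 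This last numerical clash, $(l-1)p^e \le p^el-2$ versus the structural divisibility, is where the proof is won, and pinning down that $h$ is genuinely an $l$-th power from the Galois condition at $Q$ is the step I expect to require the most care.
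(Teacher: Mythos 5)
Your first step --- that $Q\notin C$ follows formally by evaluating $F(0,1,0)$ --- contains a genuine error. You claim that $\deg h'=d-2$ forces $\deg h=d$, but $d=p^el$ is divisible by $p$, so the $y^d$-term of $h$ contributes nothing to $h'$: the condition $\deg h'=d-2$ only forces the coefficient of $y^{d-1}$ in $h$ to be nonzero and says nothing about the coefficient of $y^d$. Since $F(0,1,0)$ equals exactly that top coefficient, the case $Q\in C$ (with $C\cap\{Z=0\}=\{Q\}$ and $I_Q(C,\overline{PQ})=d$) is fully compatible with Proposition \ref{equation1} and with smoothness (indeed $F_Z(0,1,0)$ is the $y^{d-1}$-coefficient, which is nonzero, so $C$ would simply be smooth at $Q$ with tangent line $Z=0$). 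Ruling this case out is precisely the nontrivial content of the first half of the lemma, and it cannot be read off the equation alone: the paper's proof uses the second Galois point $P_2$ essentially (the $d-2$ lines $h'(y)=0$, all through $P$, carry every point $R'$ with $Q\in T_{R'}C$, so a line through $P_2$ meets this set in at most $d-2$ points; but if $Q\in C$ then $G_{P_2}$ fixes $Q$ and forces a full fiber of $d$ such points on some line through $P_2$). Your argument never invokes $P_2$, which is the structural sign that something is missing.

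For $Q\ne P_2$, your route (b) --- apply the Section 2 analysis at $P_2=Q$ to conclude that $h$ is an $l$-th power of an additive polynomial and then derive a degree contradiction --- is a reasonable idea, genuinely different from the paper's, and the contradiction it would yield is sound: if $h=c\,g_1^l$ with $g_1$ additive of degree $p^e$ and nonzero linear coefficient, then $g_1'$ is a nonzero constant, so $\deg h'=(l-1)p^e$, which cannot equal $d-2=p^el-2$ because $l\mid p^e-1$ and $l\ge 3$ give $p^e\ge 4$. But you do not prove the key claim; you explicitly defer it, and there is a real issue hiding there: Proposition \ref{equation1} applied at $P_2$ produces its normal form only in coordinates adapted to $P_2$ and to the fixed point of $G_{P_2}$, and one must check compatibility with the coordinates already fixed for $P$ before reading off that $h$ itself has that shape. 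The numerical statement you offer in its place --- $(l-1)p^e\le p^el-2$ ``versus the structural divisibility'' --- is not a contradiction as written; only the exact degree mismatch is. The paper sidesteps all of this by counting ramification of $\pi_{P_2}$ directly: the ramification points off $Z=0$ lie on the $p^e$ lines $\sigma(L_{\tau_2})$, each meeting $C$ in exactly $d$ points, each point tame of index $l$, which forces $h'$ to be a constant times $\prod_{i=1}^{p^e}(y-b_i)^{l-1}$, of degree $p^e(l-1)\ne p^el-2$. In sum: the first half of your proof fails outright, and the second half is an unfinished sketch whose deferred step is the crux.
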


\begin{proof}
It follows from Lemma \ref{fixed locus}(1) and Fact \ref{Galois covering}(4) that $L_{\sigma}=\overline{PP_2}$ for any $\sigma \in \mathcal{K}_{P_2}$. 
Therefore, any ramified point $R \in C$ of $\pi_{P_2}$with $Z \ne 0$ is tame. 
Let $\pi_Q$ be the projection from $Q$. 
Note that $\pi_Q(x:y:1)=(x:1)$. 
By the form of $\pi_Q$, if $x-x_0$ is a local parameter at $(x_0, y_0) \in C$, then $(x_0, y_0)$ is not a ramification point. 
For a point $(x_0, y_0)$ with $f_x(x_0, y_0)=lg_0(x_0)^{l-1} \ne 0$, $y-y_0$ is a local parameter. 
Therefore, ramification points of $\pi_Q$ in $Z \ne 0$ is contained in the locus $\frac{dx}{dy}=h'(y)/f_x=0$, which is equivalent to $h'(y)=0$. 
Therefore, there exist $d-2$ lines $l_1, \ldots, l_{d-2}$ which contain $P$ and $d$ ramification points of $\pi_Q$, by Proposition \ref{equation1}. 
Since $P_2 \ne P$, for any ramification point $R$ of $\pi_Q$, the cardinality of a set $\overline{P_2R} \cap \{R'\in C|Q \in T_{R'}C\} \subset \overline{P_2R}\cap \bigcup_{i=1}^{d-2}l_i$ is at most $d-2$ .

Assume that $Q \in C$. 
It follows from Fact \ref{Galois covering}(3) that $I_Q(C, \overline{PQ})=d$. 
By Fact \ref{Galois covering}(3) again, $\gamma(Q)=Q$ for any $\gamma \in G_{P_2}$.  
Let $R \in C$ be a ramification point of $\pi_Q$ in $Z \ne 0$. 
It follows from Lemmas \ref{index}(2) and Fact \ref{Galois covering}(3) that $Q \in T_RC$. 
Since $\gamma(Q)=Q$ for any $\gamma \in G_{P_2}$, $Q \in T_{\gamma(R)}C$ for any $\gamma \in G_{P_2}$. 
Then, the cardinality of $C \cap \overline{P_2R}$ is $d$ and $Q \in T_{R'}C$ for any $R' \in C \cap \overline{P_2R}$. 
This is a contradiction to that the cardinality of $\overline{P_2R} \cap \{R'\in C|Q \in T_{R'}C\}$ is at most $d-2$. 
Therefore, $Q \in \mathbb P^2 \setminus C$. 
 
Assume that $Q \in \mathbb P^2 \setminus C$ and $Q=P_2$. 
Then, the set $C \cap \overline{PP_2}$ contains $l$ points, since the group ${\rm Im} \ r_P[\overline{PP_2}]$ is cyclic of order $l$. 
Let $\tau_2 \in \mathcal{Q}_{P_2}$ be a generator and let $L_{\tau_2}$ be the line defined as in Section 2. 
Then, the locus $\Sigma=\bigcup_{\sigma \in \mathcal{K}_{P_2}}\sigma(L_{\tau_2})$ consists of $p^e$ lines. 
By considering the order of $G_{P_2}$, the ramification locus of $\pi_Q$ in an affine plane $Z \ne 0$ is contained in the locus $\Sigma$. 
Note that the set $\bigcap_{\sigma \in \mathcal{K}_{P_2}} \sigma(L_{\tau_2})$ consists of a unique point, which is not contained in $C$, by Fact \ref{Galois covering}(3) and that the set $C \cap \overline{PP_2}$ contains two or more distinct points.  
Since the set $C \cap \sigma(L_{\tau_2})$ consists of exactly $d$ points for any $\sigma \in \mathcal{K}_{P_2}$, the number of ramification points in $Z \ne 0$ is exactly $p^e \times d$. 
On the other hand, for each $b \in K$ with $h'(b)=0$, there exist exactly $d$ points $(a, b)$ such that $f(a, b)=0$, since $\alpha_e\alpha_0 \ne 0$ and $h(b) \ne 0$ by Proposition \ref{equation1}. 
Therefore, $h'(y)$ has exactly $p^e$ roots. 
Let $R$ be a ramification point of $\pi_Q$ which is contained in $Z \ne 0$. 
Since $R$ is tame (stated above), $e_R$ is computed as the order of $\frac{dx}{dy}=h'(y)/f_x$ at $R$ plus one. 
Since $e_R=l$ for any ramification point $R \in C$ with $Z \ne 0$, the polynomial $h'(y)$ is divisible by $(y-b)^{l-1}$ if $h'(b)=0$.  
Therefore, $h'(y)$ should be of the form $c(y-b_1)^{l-1}\cdots(y-b_{p^e})^{l-1}$, which is of degree $p^e(l-1)$. 
However, $h'(y)$ is of degree $p^el-2$, by Proposition \ref{equation1}. 
This is a contradiction. 
\end{proof}

\begin{proof}[Proof of Theorem $\ref{outerGP}$ (when $l \ge 3$)]
It follows from Lemma \ref{non-Galois} that $P_2 \ne Q$ and $Q \not\in C$. 
If $l \ge 5$ or $l=3$, then this is a contradiction to Proposition \ref{fixed locus}(2)(4).  
Assume that $l=4$. 
Then, by Lemma \ref{fixed locus}(3), there exists two distinct outer Galois points $P_3, P_4$ such that $\gamma(P_4)=P_4$ for any $\gamma \in G_{P_3}$. 
Then, this is a contradiction to Lemma \ref{non-Galois}.  
\end{proof}

\section{Proof of Theorem \ref{outerGP} (The case where $l \le 2$)} 
Let $p \ge 3$, let $e \ge 1$, let $l \le 2$ and let $C$ be a smooth plane curve of degree $d=p^el \ge 4$. 
We denote by $L_{\infty} \subset \mathbb P^2$ the line defined by $Z=0$. 
Let $P \in \mathbb P^2 \setminus C$ be Galois with respect to $C$. 
Assume that $P=(1:0:0)$. 
Let $\gamma \in G_P$ and let $A_{\gamma}$ be a $3 \times 3$ matrix representing $\gamma$. 
Then, 
$$ 
A_{\gamma}=
\left(\begin{array}{ccc}
a_{11}(\gamma) & a_{12}(\gamma) & a_{13}(\gamma) \\
0 & 1 & 0 \\
0 & 0 & 1
\end{array}\right), $$
where $a_{11}(\gamma)=\pm 1$ and $a_{12}(\gamma), a_{13}(\gamma) \in K$. 
Then, $\gamma^*(x)=a_{11}(\gamma)x+a_{12}(\gamma)y+a_{13}(\gamma)$.   
Note that $\mathcal{K}_P=\{\gamma \in G_P|a_{11}(\gamma)=1\}$. 
Let $g(x,y):=\prod_{\sigma \in \mathcal{K}_P}(x+a_{12}(\sigma)y+a_{13}(\sigma))$. 
Note that the set of roots $\{a_{12}(\sigma)y+a_{13}(\sigma)| \sigma \in \mathcal{K}_P\} \subset K(y)$ forms an additive subgroup of $K(y)$. 
According to \cite[Proposition 1.1.5 and Theorem 1.2.1]{goss}, $g(x,y) \in K[y][x]$ has only terms of degree equal to some power of $p$ in variable $x$.  

Assume that $l=1$. 
Then, $\mathcal{K}_P=G_P$ and $g \in K(y)$, since $\sigma^*g=g$ for any $\sigma \in G_P$. 
There exists $h(y) \in K(y)$ such that $g(x, y)+h(y)=0$ in $K(x,y)$. 
This is a defining equation of $C$. 
Considering the degree, the rational function $h(y)$ is a polynomial.

\begin{lemma} \label{equation2}
Assume that $l=1$. 
Then, the defining equation of $C$ is of the form $g(x,y)+h(y)=0$, where $g(x,y) \in K[y][x]$ has only terms of degree equal to some power of $p$ in variable $x$.  
\end{lemma}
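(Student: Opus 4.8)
The plan is to assemble the claim from the discussion immediately preceding the statement, with one short additional observation. Recall that each factor $x+a_{12}(\sigma)y+a_{13}(\sigma)$ of $g$ is exactly $\sigma^{*}(x)$, so that $g=\prod_{\sigma\in\mathcal{K}_{P}}\sigma^{*}(x)$ inside $K(C)=K(x,y)$. Since $l=1$, Theorem \ref{group} gives $\mathcal{K}_{P}=G_{P}$, and reindexing the product over the group shows $\tau^{*}g=g$ for every $\tau\in G_{P}$; hence $g$ lies in the fixed field $\pi_{P}^{*}K(\mathbb{P}^{1})=K(y)$. Writing $h:=-g\in K(y)$, we obtain the relation $g(x,y)+h(y)=0$ in $K(C)$.

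Next I would check that this relation is the affine defining equation of $C$ and, along the way, that $h$ is in fact a polynomial. Let $F(X,Y,Z)$ be a homogeneous polynomial of degree $d$ defining $C$. Since $P=(1:0:0)\notin C$, the coefficient $F(1,0,0)$ of $x^{d}$ in $F(x,y,1)$ is nonzero, so $F(x,y,1)$ has $x$-degree $d=[K(C):K(y)]$; as $K(C)=K(y)(x)$ and $F(x,y,1)=0$ in $K(C)$, it follows that $F(x,y,1)/F(1,0,0)$ is the monic minimal polynomial of $x$ over $K(y)$. By Theorem \ref{group}, $g$ is monic in $x$ of degree $|\mathcal{K}_{P}|=p^{e}=d$, and $g(x,y)+h(y)=0$ exhibits $x$ as a root of the monic degree-$d$ polynomial $g(T,y)+h(y)\in K(y)[T]$; comparing with the minimal polynomial forces $g(T,y)+h(y)=F(T,y,1)/F(1,0,0)$, so $g(x,y)+h(y)=0$ does define $C$. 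Since $F(T,y,1)/F(1,0,0)\in K[y][T]$ and $g\in K[y][T]$, their difference $h$ lies in $K[y][T]$ and, being independent of $T$, in $K[y]$.

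Finally, the asserted shape of $g$ in the variable $x$ — that only the monomials $x^{p^{i}}$ occur — is exactly what was recorded before the statement: the roots of $g$ viewed as a polynomial in $x$ form the additive subgroup $\{a_{12}(\sigma)y+a_{13}(\sigma):\sigma\in\mathcal{K}_{P}\}$ of $K(y)$, so \cite[Proposition 1.1.5 and Theorem 1.2.1]{goss} applies directly. I do not anticipate a genuine obstacle; the only point that needs an argument beyond the preceding paragraph is the polynomiality of $h$, and the minimal-polynomial comparison disposes of it in one line.
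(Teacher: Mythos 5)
Your proposal is correct and follows essentially the same route as the paper: invariance of $g=\prod_{\sigma\in G_P}\sigma^*(x)$ under the Galois group places it in $K(y)$, and the additive-subgroup-of-roots observation plus Goss gives the $p$-power shape of $g$ in $x$. Your minimal-polynomial comparison is just a more explicit version of the paper's terse ``considering the degree'' justification that $g+h=0$ defines $C$ with $h$ a polynomial.
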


Assume that $\delta'(C) \ge 2$. 
Let $P_2 \in \mathbb P^2 \setminus (C\cup\{P\})$ be Galois with respect to $C$. 
By taking a suitable system of coordinates, we may assume that $P_2=(0:1:0)$. 
Then, $\overline{PP_2}=L_{\infty}$. 
Similar to the previous section, we consider a group homomorphism $r_P: G_P \rightarrow {\rm Aut}(\overline{PP_2})$, which is induced from the restriction. 
The cardinality of the kernel ${\rm Ker} \ r_P$ is a power of $p$. 
We denote it by $p^v$. 
Obviously, $0 \le v \le e$. 
Then, ${\rm Ker} \ r_P=\{\sigma \in G_P|\sigma(P_2)=P_2\}$ of $G_P$, by Lemma \ref{automorphisms of P^1}. 

\begin{lemma} \label{l=1}
If $l=1$, then $v=e$. 
\end{lemma}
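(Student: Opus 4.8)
The plan is to pin down the degree-$d$ homogeneous part of a defining polynomial of $C$ using that both $P=(1:0:0)$ and $P_2=(0:1:0)$ are Galois with $l=1$, deduce that $C$ meets $L_{\infty}=\overline{PP_2}$ in a single point, and then carry out the stabilizer computation for that point. Concretely, by Lemma \ref{equation2} applied to $P$ the curve $C$ has a defining equation $f=g(x,y)+h(y)=0$ with $g=\sum_{i=0}^{e}c_i(y)x^{p^i}$, so every monomial of $f$ has $x$-degree in $\{0,1,p,\dots,p^e\}$. Since $d=p^el$ with $l=1$, the point $P_2=(0:1:0)$ is also Galois with $l=1$; repeating the construction of Lemma \ref{equation2} with the roles of $x$ and $y$ interchanged (the elements of $G_{P_2}$ fix $x$ and act by $y\mapsto y+b_{21}x+b_{23}$) gives a second defining equation $\tilde g(x,y)+\tilde h(x)=0$ with $\tilde g\in K[x][y]$ involving only $y$-degrees that are powers of $p$, so every monomial of $f$ also has $y$-degree in $\{0,1,p,\dots,p^e\}$.

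Next I would examine $F(X,Y,0)$, the degree-$d$ part of $f$: each of its monomials $X^aY^b$ satisfies $a+b=p^e$ with $a\in\{0\}\cup\{p^i\}$ and $b\in\{0\}\cup\{p^j\}$, and since $p\ge 3$ the equation $p^i+p^j=p^e$ has no solution in non-negative integers $i,j$ (compare base-$p$ digits), so the only surviving monomials are $X^{p^e}$ and $Y^{p^e}$, with coefficients $F(1,0,0)\ne 0$ and $F(0,1,0)\ne 0$ because $P\notin C$ and $P_2\notin C$. Hence $F(X,Y,0)=\alpha X^{p^e}+\beta Y^{p^e}=\alpha(X+\rho Y)^{p^e}$ with $\alpha\beta\ne 0$ and $\rho=(\beta/\alpha)^{1/p^e}$, so $C\cap L_{\infty}$ is the single point $R=(-\rho:1:0)$, which is $\ne P$.

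To finish, since $l=1$ every $\gamma\in G_P$ is represented by a matrix with $a_{11}(\gamma)=1$, so $\gamma(P_2)=(a_{12}(\gamma):1:0)\in L_{\infty}$; thus $\gamma$ sends the two points $P,P_2$ into $L_{\infty}$, whence $\gamma(L_{\infty})=L_{\infty}$ and $\gamma$ permutes $C\cap L_{\infty}=\{R\}$, i.e. $\gamma(R)=R$. On $L_{\infty}$ the map $\gamma$ is $(X:Y)\mapsto(X+a_{12}(\gamma)Y:Y)$, and fixing $R=(-\rho:1:0)$ forces $a_{12}(\gamma)=0$. Consequently ${\rm Ker}\ r_P=\{\gamma\in G_P\mid a_{12}(\gamma)=0\}=G_P$, so $p^v=|{\rm Ker}\ r_P|=|G_P|=p^e$ and $v=e$.

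I expect the main obstacle to be the middle step, cutting $F(X,Y,0)$ down to a binomial: it uses both fibered forms together and the arithmetic fact that $p^i+p^j$ is never a power of $p$ when $p\ge 3$. This is exactly where the hypothesis $p\ge 3$ is essential --- for $p=2$ one has $2^{e-1}+2^{e-1}=2^e$, and the Klein quartic ($p=2$, $d=4$) indeed has $\delta'(C)=3$ with its three outer Galois points collinear, so the statement fails in that case.
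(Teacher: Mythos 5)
Your proof is correct, and it reaches the conclusion by a genuinely different route from the paper's, although both arguments rest on the same two inputs: Lemma \ref{equation2} applied at $P$ together with its mirror image at $P_2$ (so every monomial of the defining polynomial has $x$-degree and $y$-degree in $\{0\}\cup\{p^i\}$), plus base-$p$ digit arithmetic for $p\ge 3$. The paper finishes in three lines by contradiction: if $v<e$, the coefficient of $x^1$ in $g(x,y)=\prod_{\sigma\in\mathcal{K}_P}(x+a_{12}(\sigma)y+a_{13}(\sigma))$ is a polynomial in $y$ of degree $p^e-p^v=p^v(p^{e-v}-1)$ (the number of $\sigma$ with $a_{12}(\sigma)\ne 0$ being $p^e-p^v$), and the $P_2$-form forces this degree to be a power of $p$, whence $p^{e-v}-1=1$ and $p=2$, a contradiction. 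You instead use the digit arithmetic (no solution of $p^i+p^j=p^e$ for $p\ge 3$) to pin down the entire top-degree form $F(X,Y,0)=\alpha X^{p^e}+\beta Y^{p^e}=\alpha(X+\rho Y)^{p^e}$, conclude that $C$ meets $\overline{PP_2}$ in a single point $R\ne P$, and then run a fixed-point argument on the $G_P$-action to get $a_{12}(\gamma)=0$ for every $\gamma$ directly, without contradiction. Your route is longer but yields extra geometric information (the line $\overline{PP_2}$ meets $C$ at one point with multiplicity $d$) and proves the statement constructively; the paper's is essentially a one-coefficient count. Your closing observation that $p=2$ is exactly where the digit argument breaks (e.g.\ $2^{e-1}+2^{e-1}=2^e$) correctly identifies the same pivot on which the paper's shorter computation turns.
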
 

\begin{proof}
We assume that $v<e$. 
Then, the coefficient of $g(x,y) \in K[y][x]$ of degree one in variable $x$ is of degree $p^e-p^v$ in variable $y$. 
By Lemma \ref{equation2}, $p^e-p^v=p^v(p^{e-v}-1)$ is a power of $p$. 
Therefore, $p^{e-v}-1=p^b$ for some integer $b$. 
This implies $b=0$ and $p=2$. 
This is a contradiction. 
\end{proof}

By Lemmas \ref{equation2} and \ref{l=1}, we have a defining equation $g(x)+h(y)=0$, where $g, h$ has only terms of degree equal to some power of $p$. 
It is not difficult to check that $C$ is singular. 
This is a contradiction. 

Assume that $l=2$. 
Let $\tau \in G_P \setminus \mathcal{K}_P$. 
Then, $\tau(x,y)=(-x+a_{12}(\tau)y+a_{13}(\tau), y)$ for some $a_{12}(\tau), a_{13}(\tau) \in K$. 
Then, $G_P=\{\sigma \tau^i|\sigma \in \mathcal{K}_P, i=0,1\}$. 
Note that $\sigma \tau (x, y)=(-x+(a_{12}(\sigma)+a_{12}(\tau))y+(a_{13}(\sigma)+a_{13}(\tau)), y)$. 
Therefore, $\hat{g}(x, y):=\prod_{\gamma \in G_P}\gamma^*(x)=g(x, y) \times g(-x+a_{12}(\tau)y+a_{13}(\tau),y)=-g^2(x, y)-g(x,y)g(-a_{12}(\tau)y-a_{13}(\tau), y)$, since $g(x,y)$ is additive in variable $x$. 
Since $\gamma^*\hat{g}(x,y)=\hat{g}(x,y)$ for any $\gamma \in G_P$, there exists $h(y) \in K(y)$ such that $f(x,y):=\hat{g}(x, y)+h(y)=0$ in $K(x,y)$. 
This is a defining equation of $C$, and $h(y)$ is a polynomial. 

\begin{lemma} \label{equation3}
Assume that $l=2$. 
Then, the defining equation of $C$ is of the form $g^2(x,y)+g(x,y)g(ay+b, y)+h(y)+\lambda=0$, where $a, b, \lambda \in K$, $h(0)=0$ and $g \in K[y][x]$ has only terms of degree equal to some power of $p$ in variable $x$. 
\end{lemma}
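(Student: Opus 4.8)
The plan is to continue the computation carried out immediately before the statement. Put $a=a_{12}(\tau)$ and $b=a_{13}(\tau)$. Since $\mathcal{K}_P$ has index $2$ in $G_P$, with coset representatives $1$ and $\tau$, the product $\hat g(x,y)=\prod_{\gamma\in G_P}\gamma^*(x)$ factors as $\big(\prod_{\sigma\in\mathcal{K}_P}\sigma^*(x)\big)\big(\prod_{\sigma\in\mathcal{K}_P}(\sigma\tau)^*(x)\big)$. The first factor is $g(x,y)$ by definition, and in the second factor $(\sigma\tau)^*(x)=-x+ay+b+a_{12}(\sigma)y+a_{13}(\sigma)$; since $\{a_{12}(\sigma)y+a_{13}(\sigma):\sigma\in\mathcal{K}_P\}$ is exactly the additive set of roots of $g$ regarded as a polynomial in $x$, that second factor equals $g(-x+ay+b,\,y)$, where the substitution $x\mapsto -x+ay+b$ is made in the first variable. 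Using that $g$ has only terms whose $x$-degree is a power of $p$ --- hence is additive in $x$, by \cite[Proposition 1.1.5 and Theorem 1.2.1]{goss} as already invoked --- I obtain $g(-x+ay+b,y)=g(-x,y)+g(ay+b,y)=-g(x,y)+g(ay+b,y)$, whence $\hat g(x,y)=-g(x,y)^2+g(x,y)\,g(ay+b,y)$.

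Substituting this into the defining equation $f(x,y)=\hat g(x,y)+h(y)=0$ of $C$ obtained just above, the curve is cut out by $g(x,y)^2-g(x,y)g(ay+b,y)-h(y)=0$. To reach the asserted normal form I would use additivity of $g$ once more to rewrite $-g(ay+b,y)=g(-ay-b,y)$ and rename $(a,b)$ as $(-a,-b)$, and then split $-h(y)=\big(h(0)-h(y)\big)-h(0)$, renaming $h(0)-h(y)$ as $h(y)$ --- still a polynomial, and now $h(0)=0$ --- and setting $\lambda=-h(0)$. This yields exactly $g^2(x,y)+g(x,y)g(ay+b,y)+h(y)+\lambda=0$ with the stated properties.

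I do not expect a genuine obstacle: the argument is just a continuation of the computation preceding the statement, followed by a sign-and-constant normalization. The only points needing care are the bookkeeping of the two cosets of $\mathcal{K}_P$ in the product defining $\hat g$, and the consistent use of the identities $g(u+w,y)=g(u,y)+g(w,y)$ and $g(-u,y)=-g(u,y)$, which are legitimate because $g$ is an additive polynomial in $x$ and the characteristic is $p\ge 3$; the subtlest point is simply arranging signs so that the free parameters $a$, $b$ and the constant $\lambda$ absorb precisely the right terms.
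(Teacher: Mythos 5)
Your proposal is correct and follows essentially the same route as the paper, whose justification for this lemma is precisely the computation in the paragraph preceding it: factoring $\hat g=\prod_{\gamma\in G_P}\gamma^*(x)$ over the two cosets of $\mathcal{K}_P$, using additivity of $g$ in $x$ (via Goss) to get $\hat g=-g^2+g\cdot g(ay+b,y)$, and invoking $G_P$-invariance to write the defining equation as $\hat g+h(y)=0$. Your final sign-and-constant normalization (replacing $(a,b)$ by $(-a,-b)$ and splitting off $h(0)$ as $\lambda$) is exactly the bookkeeping the paper leaves implicit in passing from that computation to the stated normal form.
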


We consider $P_2=(0:1:0)$. 
It follows from Lemma \ref{equation3} that there exist polynomials $g_1(x,y) \in K[x][y]$ and $h_1(x) \in K[x]$ such that $g_1(x,y)$ has only terms of degree equal to some power of $p$ in variable $y$ and $f_1(x,y):=g_1^2(x,y)+g_1(x,y)g_1(x, cx+d)+h_1(x)+\lambda_2$ is a defining polynomial of $C$ for some $c, d, \lambda_2 \in K$. 
Then, $f(x,y)=f_1(x,y)$ up to a constant. 

\begin{lemma} \label{l=2}
If $l=2$, then $v=e$. 
\end{lemma}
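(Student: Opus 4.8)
The plan is to play the defining equation of $C$ seen from $P$ against the one seen from $P_2$ and to force $v=e$ by comparing a single coefficient. First I record what $v=e$ means. Since $p\ge 3$, restriction to the line $\overline{PP_2}=L_\infty$ sends $\sigma\in\mathcal K_P$ to the translation $(X:Y)\mapsto(X+a_{12}(\sigma)Y:Y)$ and sends every element of $G_P\setminus\mathcal K_P$ to an automorphism of $L_\infty$ with leading coefficient $-1\ne 1$; hence ${\rm Ker}\ r_P=\{\sigma\in\mathcal K_P : a_{12}(\sigma)=0\}$ has cardinality $p^v$, and the $\mathbb F_p$-linear map $\sigma\mapsto a_{12}(\sigma)$ has image an $\mathbb F_p$-subspace $V$ of dimension $e-v$. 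Writing $g(x,y)=\sum_{i}c_i(y)x^{p^i}$, the coefficient of $x$ is $c_0(y)=\prod_{\sigma\ne 1}(a_{12}(\sigma)y+a_{13}(\sigma))$, whose degree in $y$ equals the number of $\sigma\in\mathcal K_P$ with $a_{12}(\sigma)\ne 0$, namely $p^e-p^v$. So $v=e$ is equivalent to $g$ being independent of $y$, which is what must be proved.

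Next I compare coefficients. From $P$ the equation is $f=g(x,y)^2+g(x,y)G^*(y)+h(y)+\lambda$ with $G^*(y)=g(ay+b,y)$; from $P_2$ it is $f=\mu f_1$ with $f_1=g_1^2+g_1(x,y)G_1^*(x)+h_1(x)+\lambda_2$, where $g_1=\sum_{j}d_j(x)y^{p^j}$ is additive in $y$. Take the coefficient of $x^1$. On the $P$-side every monomial of $g^2$ has $x$-degree $p^i+p^j\ge 2$ because $p\ge 3$, and $h(y)+\lambda$ carries no $x$, so the coefficient of $x^1$ in $f$ is $c_0(y)G^*(y)$. On the $P_2$-side $g_1^2=\sum_{j,k}d_jd_k\,y^{p^j+p^k}$, $g_1G_1^*=\sum_{j}d_jG_1^*\,y^{p^j}$, and $h_1(x)+\lambda_2$ contributes only $y^0$; hence the coefficient of $x^1$ in $f_1$, and therefore $c_0(y)G^*(y)$, is supported on the set $S$ of nonnegative integers with base-$p$ digit sum at most $2$ (equivalently $0$, a single power $p^j$, a sum $p^j+p^k$ of two powers, or $2p^j$). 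In particular $\deg_y\bigl(c_0(y)G^*(y)\bigr)\in S$.

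Now assume $v<e$. Since $G^*(y)=\prod_{\sigma\in\mathcal K_P}\bigl((a+a_{12}(\sigma))y+(b+a_{13}(\sigma))\bigr)$ with $a=\pm a_{12}(\tau)$, the degree of $G^*$ in $y$ is $p^e$ when $a_{12}(\tau)\notin V$ and $p^e-p^v$ when $a_{12}(\tau)\in V$, the only remaining possibility being $G^*\equiv 0$, which forces some $\gamma_0\in G_P\setminus\mathcal K_P$ to act as $(X:Y:Z)\mapsto(-X:Y:Z)$. Since $c_0$ and (when nonzero) $G^*$ have nonzero leading $y$-coefficients, there is no cancellation in the product, so in the first case $\deg_y(c_0G^*)=2p^e-p^v$ and a short base-$p$ computation gives digit sum $1+(e-v)(p-1)\ge 3$, while in the second $\deg_y(c_0G^*)=2(p^e-p^v)$ with digit sum $(e-v)(p-1)$, which is $\ge 3$ unless $p=3$ and $e-v=1$. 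A digit sum $\ge 3$ contradicts $\deg_y(c_0G^*)\in S$. Hence $v=e$ except possibly in the single configuration $p=3$, $e-v=1$, $a_{12}(\tau)\in V$, and in the degenerate case $G^*\equiv 0$.

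I expect these residual configurations to be the only real work, and the $p=3$ corner to be the hardest point. There $\dim V=1$, so $V=\mathbb F_3\alpha$ and the degree-$p^e$ part of $g$ is $x^{p^e}-\alpha^{2p^{e-1}}x^{p^{e-1}}y^{2p^{e-1}}$; I would push the comparison one slot further, using that the coefficient of $x^p$ in $f$ equals $c_1(y)G^*(y)$ and is again supported on $S$, and combine this with the explicit product formula for $c_1(y)$ over the coset structure of $\mathcal K_P$ to reduce to a finite check. Alternatively, $a_{12}(\tau)\in V$ means precisely that an involution $\gamma_0\in G_P\setminus\mathcal K_P$ fixes both $P$ and $P_2$, and one may run an argument analogous to the proof of Proposition \ref{fixed locus}(1): applying an element of $\mathcal K_{P_2}$ to the points of $C\cap L_{\gamma_0}$ and intersecting tangent lines forces a nontrivial element of $\mathcal K_{P_2}$ to fix $P$, which contradicts $C$ being smooth and Galois at $P_2$. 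The case $G^*\equiv 0$ is disposed of in the same spirit, since the equation then collapses to $g(x,y)^2+h(y)+\lambda=0$ and one checks that $\pi_{P_2}$ cannot be Galois unless $g$ is independent of $y$.
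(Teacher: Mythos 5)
Your reduction is sound as far as it goes, and it runs parallel to the paper's own first step: the paper compares the coefficient of $x^2$ in $f$, which is $c_0(y)^2$ and hence automatically nonzero of $y$-degree $2(p^e-p^v)$, against the support of $f_1$ (exponents $p^i+p^j$), arriving at exactly your equation $2p^{e-v}-p^{j-i}=3$ and the same residual configuration $p=3$, $v=e-1$. Using the coefficient of $x^1$ instead, as you do, is workable but strictly worse: it drags in $G^*(y)=g(ay+b,y)$ and forces the case split on $a\in V$ versus $a\notin V$ and the degenerate case $G^*\equiv 0$, all of which disappear if you square $c_0$ instead. You should switch to the $x^2$ coefficient; that also disposes of $G^*\equiv 0$ for free.

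The genuine gap is the residual case $p=3$, $v=e-1$, which you explicitly leave as a sketch ("reduce to a finite check"); this is precisely where the paper does almost all of its work. The paper first forces $e=1$ by a delicate analysis of the terms $x^{p^e+1}y^{2p^{e-1}}$ and $x^{k+(p^e-p^{e-1})}y^{p^{e-1}+1}$ of $g_1^2$ (your "one slot further" with the coefficient of $x^p$ is not obviously equivalent and is not carried out), and then settles $p=3$, $e=1$, $d=6$ by writing $g(x,y)=x^3-(\alpha y+\beta)^2x$, $g_1(x,y)=y^3-(\alpha_1x+\beta_1)^2y$ explicitly and running a sub-case analysis on $\deg_y g(ay+b,y)$ that ends either in a singular point on $Y=0$ or in a monomial ($xy^5$, $x^2y^3$, $x^3y^2$) present in one defining polynomial but not the other. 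None of this is in your proposal. Your proposed "alternative" via the argument of Proposition 2(1) also has an unjustified step: what that argument yields is that every $\sigma\in\mathcal K_{P_2}$ fixes $P$, i.e.\ that $\mathcal K_{P_2}$ acts trivially on $\overline{PP_2}$ — a statement about the kernel at $P_2$, not about $v=v_P$ — and fixing $P$ is not by itself in contradiction with smoothness of $C$ or with $P_2$ being Galois. So the lemma is not proved in the one configuration that actually requires proof.
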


\begin{proof} 
Assume that $v<e$. 
Then, $g(x)$ has the term $xy^{p^e-p^v}$. 
We consider the term $x^2y^{2(p^e-p^v)}$ for $f(x,y)$, which is not zero.
Note that the polynomial $g_1(x,y)g_1(x, cx+d)$ has only terms of degree equal to some power of $p$ in variable $y$, and that 
the polynomial $g_1^2(x,y)$ has only terms of degree $p^i+p^j=p^i(1+p^{j-i})$ with $i \le j$ and $0 \le i, j\le e$ in variable $y$.
Therefore, $2p^v(p^{e-v}-1)=p^i(1+p^{j-i})$ for some $i, j$ with $i \le j$. 
Then, we should have $i=v$ and $2(p^{e-v}-1)=1+p^{j-i}$. 
This implies that $2p^{e-v}-p^{j-i}=3$. 
If $j=i$, then $p=2$. 
If $j \ne i$, then $p^{j-i}(2p^{e-v-j+i}-1)=3$. 
We should have $p=3$, $j-i=1$ and $p^{e-v-1}=1$. 
We have $i=v=e-1$ and $j=e$. 
The polynomial $f(x, y)$ also has the term $x^{p^e+1}y^{p^e-p^v}=x^{p^e+1}y^{2p^{e-1}}$. 
Since $g_1^2(x,y)$ has the term $x^{p^e+1}y^{p^{e-1}+ p^{e-1}}$, $g_1(x,y)$ has the term $x^{k_0}y^{p^{e-1}}$ for some $k_0$ with $(p^e+1)/2 \le k_0 \le p^e-p^{e-1}=2p^{e-1}$. 
Let $k$ be the highest degree $k$ such that the term $x^ky^{p^{e-1}}$ of $g_1(x,y)$ is not zero. 
Then, $k \ge k_0$ and $g_1^2(x,y)$ has the term $x^{k+(p^e-p^{e-1})}y^{p^{e-1}+1}$. 
Note that $(p^e+1)/2+(p^e-p^{e-1})=p^e+(p^e-2p^{e-1}+1)/2=p^e+(p^{e-1}+1)/2$. 
Since $g^2(x, y)$ has the term $x^{k+(p^e-p^{e-1})}y^{p^{e-1}+1}$, $k+(p^e-p^{e-1}) =p^{i_1}+p^{j_1}$ for some $i_1 \le j_1$. 
Since $p^e+(p^{e-1}+1)/2 \le p^{i_1}+p^{j_1}$, we have $j_1=e$ and $i_1=e-1$. 
Therefore, $k=2p^{e-1}$. 
Assume that $(p^e+1)/2 < 2p^{e-1}$. 
Let $k_1$ be the highest degree such that the term $x^{k_1}y^{p^{e-1}}$ of $g_1(x,y)$ is not zero and $(p^e+1)/2 \le k_1 < k$. 
Then, $g_1^2(x,y)$ and $g^2(x,y)$ have the term $x^{k+k_1}y^{2p^{e-1}}$. 
Hence, $k+k_1=p^{i_2}+p^{j_2}$ for some $i_2 \le j_2$. 
By an inequality $p^e+(p^{e-1}+1)/2 \le p^{i_2}+p^{j_2}$, we have $j_2=e$ and $i_2=e-1$, and $k=k_1$. 
This is a contradiction. 
Therefore, $(p^e+1)/2=2p^{e-1}$. 
Then, we have $e=1$.

Let $p=3$ and $e=1$. 
Then, $g(x, y)=x^3-(\alpha y+\beta)^2x$ and $g_1(x, y)=y^3-(\alpha_1 x+\beta_1)^2y$ for some $\alpha, \beta, \alpha_1, \beta_1 \in K$ with $\alpha, \alpha_1 \ne 0$. 
For a suitable system of coordinates, we may assume that $\alpha y+\beta=y$. 
Then, $g(ay+b, y)=(ay+b)^3-y^2(ay+b)=(ay+b)((a+1)y+b)((a-1)y+b)=(a^3-a)y^3-by^2+b^3$ and $g_1(x, cx+d)=(cx+d)((c+\alpha)x+(d+\beta))((c-\alpha)x+(d-\beta))$. 
If $a(a+1)(a-1) \ne 0$, then $f(x,y)$ has the term $xy^5$. 
Then, $g_1^2(x,y)$ and $g_1(x,y)g_1(x, cx+d)$ do not have this term. 
Therefore, $a(a+1)(a-1)=0$. 
We also have $c(c+\alpha)(c-\alpha)=0$. 

Assume that $g(ay+b, y)$ is of degree at most $1$. 
Then, we find that $g(ay+b, y)=0$ and we have $f(x,y)=x^6-2x^4y^2+x^2y^4+h(y)$, $f_1(x,y)=(y^3-(\alpha_1x+\beta_1)^2y)^2+(y^3-(\alpha_1x+\beta_1)y)g_1(x, cx+d)+h(x)$. 
If $g_1(x, cx+d)$ is of degree $2$ in variable $x$, then $f_1(x,y)$ has the term $x^2y^3$. 
This is a contradiction. 
Therefore, $g_1(x, cx+d)$ is of degree at most $1$.
Then, $g_1(x, cx+d)=0$.  
We also find that $h(y)$ does not have a term $y$. 
By direct computations, $C$ has a singular point on a line defined by $Y=0$. 

Assume that $g(ay+b, y)$ is of degree $2$. 
Then, $b \ne 0$ and $f(x,y)=(x^3-y^2x)^2+(x^3-y^2x)(-by^2+b^3)+h(y)$ and $f_1(x,y)=(y^3-(\alpha_1x+\beta_1)^2y)^2+(y^3-(\alpha_1x+\beta_1)^2y)g_1(x, cx+d)+h_1(x)$. 
We also find that $g_1(x, cx+d) \ne 0$. 
Then, $g_1(x, cx+d)$ is of degree $2$. 
If $\beta_1 \ne 0$, then $g(x,y)$ has the term $x^2y$. 
Since $f(x,y)$ does not have the term $x^2y$, this is a contradiction.
Therefore, $\beta_1=0$. 
Then, $f_1(x,y)=(y^3-\alpha_1^2x^2y)^2+(y^3-\alpha_1^2x^2y)g_1(x, cx+d)+h_1(x)$. 
Then, $f(x,y)$ has the term $x^3y^2$ but $f_1(x, y)$ does not have it. 
\end{proof} 

By Lemma \ref{l=2}, we have $p^v=p^e$. 
Then, $g(x,y) \in K[x]$ and $g_1(x,y) \in K[y]$. 
We denote $g(x,y)$ by $g(x)$ and $g_1(x,y)$ by $g_1(y)$. 
We have $f(x,y)=g^2(x)+g(x)(g(ay)+g(b))+\lambda_1g_1^2(y)+\lambda_2$ for some $\lambda_1, \lambda_2 \in K$. 
Let $G(X,Z)=Z^{2p^e}g(X/Z)$ and let $G_1(Y, Z)=Z^{2p^e}g_1(Y/Z)$. 
Then, $F(X, Y, Z)=Z^{2p^e}f(X/Z, Y/Z)$ $=G^2(X, Z)+G(X, Z)(G(aY, Z)+g(b)Z^{2p^e})+\lambda_1G_1^2(Y,Z)+\lambda_2 Z^{2p^e}$. 
Let $\alpha$ (resp. $\beta$) be the coefficient of $XZ^{2p^e-1}$ (resp. $YZ^{2p^e-1}$) for $G(X, Z)$ (resp. $G_1(Y, Z)$). 
Then, $F_X=2G(X,Z)\alpha Z^{2p^e-1}+\alpha Z^{2p^e-1}(G(aY, Z)+g(b)Z^{2p^e})$, $F_Y=a \alpha Z^{2p^e-1}G(X, Z)+2\lambda_1G_1(Y, Z)\beta Z^{2p^e-1}$ and $F_Z=-2G(X, Z)\alpha XZ^{2p^e-2}-\alpha XZ^{p^e-2}(G(aY, Z)+g(b)Z^{2p^e})+G(X, Z)(-a \beta YZ^{2p^e-2})-2\lambda_1G_1(Y, Z)\beta YZ^{2p^e-2}$. 
Therefore, $F_X(X, Y, 0)=F_Y(X, Y, 0)=F_Z(X, Y, 0)=0$ and we have singular points on the line $L_{\infty}$.

We have the assertion of Theorem \ref{outerGP}. 

\section{Appendix: cubic curves} 
If a smooth curve $C$ is cubic, then the following proposition may be known. 
However, we can prove this, because it seems that an explicit proof is not given in any other published papers.  

\begin{proposition} \label{cubic}
Let $C$ be a smooth plane curve of degree three. 
Then, we have the following. 
\begin{itemize}
\item[(i)] If $p=2$, then $\delta'(C)=0$ or $12$ $(=2^4-2^3+2^2)$.
\item[(ii)] If $p=3$, then $\delta'(C)=0$ or $1$. 
\item[(iii)] If $p \ne 2, 3$, then $\delta'(C)=0$ or $3$.   
\end{itemize}
Furthermore, $\delta'(C) \ge 2$ if and only if $C$ is projectively equivalent to a Fermat curve. 
\end{proposition}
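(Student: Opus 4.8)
The plan is to reduce the cubic case to the machinery already developed for higher-degree curves, treating each characteristic separately according to whether $p$ divides $d=3$, and then to extract the Fermat characterization from the equations that emerge. For $p\ne 3$ we have $d=p^0\cdot 3$ with $l=3$ not divisible by $p$, so $d-1=2$ need not be a power of $p$ (unless $p=2$); for $p=2$ we have $d-1=2^1$, which is the exceptional Fermat case, while for $p=3$ we have $d=p=3$. In each regime the relevant counting results quoted in the introduction (Yoshihara for $p=0$; Fukasawa for $d$ prime to $p$, for $d=p$, for $d=2^e$) already pin down the possible values of $\delta'(C)$: this gives $\delta'(C)\in\{0,3\}$ when $p\ne 2,3$, $\delta'(C)\in\{0,12\}$ when $p=2$ (the number $(d-1)^4-(d-1)^3+(d-1)^2=16-8+4=12$), and $\delta'(C)\in\{0,1\}$ when $p=3$ by Theorem \ref{outerGP} together with the structure theorem Theorem \ref{group} (for $d=p$, $l=1$, so $G_P$ is a $p$-group and the argument of Lemma \ref{l=1} applies verbatim). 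So parts (i)--(iii) reduce to verifying that $\delta'(C)$ is \emph{not} forced to $0$, i.e. exhibiting at least one curve attaining the positive value in each case, and this is precisely the Fermat curve $x^3+y^3+z^3=0$ (resp. $x^3+y^3-z^3=0$ when $p=3$, where it is smooth).

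For the "furthermore" clause I would argue both implications. The easy direction: if $C$ is projectively equivalent to the Fermat cubic, then the three coordinate points $(1:0:0)$, $(0:1:0)$, $(0:0:1)$ are outer Galois (the projection from $(1:0:0)$ gives $K(y,z)/K(z)$ with $y^3 = -(z^3+1)$, a Kummer-type cyclic extension of degree $3$ when $p\ne 3$, and an Artin--Schreier-type extension when $p=3$ after the substitution making the equation $y^3-y=\cdots$, which one checks is still Galois of degree $3$); hence $\delta'(C)\ge 2$, indeed $\ge 3$. The converse is the substantive part: assume $\delta'(C)\ge 2$ and derive the Fermat equation. Here I would run exactly the normalization already carried out in Sections 5 and 6. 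When $p\ne 3$ ($l=3\ge 3$) Proposition \ref{equation1} applies with $e=0$: the defining equation becomes $(\alpha_0 x)^3 + h(y) = 0$ with $\alpha_0\ne 0$ and $\deg h' = 1$, so $h$ is a cubic in $y$ with nonvanishing linear-through-cubic behavior; absorbing constants by a projective change of coordinates on $x$ and $y$ reduces this to $x^3 + y^3 + (\text{linear in }y) = 0$, and smoothness plus a final translation forces the Fermat form $x^3+y^3+z^3=0$. When $p=3$ ($l=1$), the analysis of Lemma \ref{l=1} shows $v=e=0$ is impossible for a genuine second Galois point unless the equation degenerates; one checks directly that the only smooth cubic admitting even one outer Galois point with the required $p$-group structure is the (unique up to projective equivalence) smooth cubic $x^3+y^3+z^3=0$, which in $p=3$ is projectively the Hesse-type Fermat curve, and that it admits exactly one such point, matching $\delta'(C)=1$.

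The main obstacle I anticipate is the $p=3$ sub-case: the general Section 5--6 framework is built assuming $l\ge 3$ or $l\le 2$ with $p\ge 3$ but $d\ge 4$, whereas here $d=3$, so several inequalities (e.g. the Stöhr--Voloch Wronskian bound $3d(d-2)$, and the genus-based estimates) are too weak to rule anything out, and the ramification/flex bookkeeping must be redone by hand for a genus-one curve. Concretely, for $d=3$ the curve is elliptic, $\mathrm{Aut}(C)$ is well understood, and a Galois point of degree $3$ corresponds to a subgroup of order $3$ acting with quotient $\mathbb P^1$; I would instead identify outer Galois points with the $3$-torsion structure (or the action of translations-by-$3$-torsion composed with the inversion, for $p\ne 3$) and count them directly on the specific $j$-invariant where the automorphism group is large enough — namely $j=0$, which is the Fermat cubic. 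This geometric detour over elliptic curves is shorter and more reliable than pushing the positive-degree estimates down to $d=3$, and it is also what makes the clean statements $\delta'(C)=12,1,3$ transparent: $12$ is the number of order-$3$ subgroups arising in the extra automorphisms in characteristic $2$, $3$ is the analogous count in characteristic $\ne 2,3$, and $1$ is the unique wild one in characteristic $3$.
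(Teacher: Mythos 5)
Your reduction of parts (i)--(iii) to ``exhibit one example'' inverts where the actual difficulty lies, and this is the main gap. The counting results you quote from the introduction (Yoshihara, Homma, and the $d$-prime-to-$p$ and $d=p$ cases) are all proved under the standing hypothesis $d\ge 4$; the appendix exists precisely because a cubic is not covered by them, so you cannot import the upper bounds $\delta'(C)\le 3$ (resp.\ $\le 12$, $\le 1$) from there. The substantive content for $p\ne 2,3$ is ruling out a \emph{fourth} outer Galois point, and your proposal has no concrete argument for this: the paper does it by showing any extra Galois point $P_2$ lies on the tangent line at an inflection point $Q$ together with $P_1$, producing four Galois points on that line and hence $4\times 2+1=9$ automorphisms of $C$ fixing $Q$, which by the classification of automorphism groups of elliptic curves (\cite[Theorem 10.1]{silverman}) forces $p=2$. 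Your ``identify Galois points with $3$-torsion/order-$3$ subgroups and count on $j=0$'' is in the same elliptic-curve spirit, but the dictionary between outer Galois points and order-$3$ subgroups of ${\rm Aut}(C)$ is not set up (one needs that $G_P$ consists of \emph{linear} automorphisms fixing a line pointwise, i.e.\ Lemma \ref{extendability}, and a converse), and no count is actually performed. Relatedly, invoking Proposition \ref{equation1} with $e=0$ is outside its hypotheses ($e\ge 1$, and its proof uses the Hefez--Homma generic order of contact and a nontrivial $\mathcal{K}_P$); the paper instead gets the normal form $X^3+G(Y,Z)=0$ directly by diagonalizing the order-$3$ generator of $G_P$, and then normalizes $G$ to $a(Y^3+Z^3)$ by $3$-transitivity of ${\rm Aut}(\mathbb P^1)$ --- that part of your sketch is essentially right.

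A second, outright error: in characteristic $3$ the curve $x^3+y^3-z^3=(x+y-z)^3$ is a triple line, so there is no smooth Fermat cubic in characteristic $3$, and your claims that it is smooth and that it is ``the only smooth cubic admitting one outer Galois point'' there are false. The ``furthermore'' clause is simply vacuous for $p=3$ because $\delta'(C)\le 1$, which is what the paper proves by observing that the $d=p^e$ argument of Section 6 (Lemmas \ref{equation2} and \ref{l=1}, leading to a singular equation $g(x)+h(y)=0$ with both $g,h$ additive) goes through verbatim for $d=3$; your phrasing ``$v=e=0$ is impossible'' misstates this, since here $e=1$ and $l=1$.
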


Let $C$ be a smooth cubic. 
Firstly, we note the following. 
\begin{lemma} \label{extendability} 
Assume that $P \in \mathbb P^2$ is outer Galois. 
Then, any automorphism $\sigma \in G_P$ is a restriction of a linear transformation. 
\end{lemma}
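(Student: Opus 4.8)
The plan is to reduce the statement to the single assertion that $\sigma^{*}\mathcal{O}_{C}(1)\cong\mathcal{O}_{C}(1)$ for every $\sigma\in G_{P}$, and to deduce that assertion from the fact that each $\sigma\in G_{P}$ commutes with the projection $\pi_{P}$. First I would record the features of the hyperplane bundle special to a smooth cubic. Since $C$ has genus $1$ and $\deg\mathcal{O}_{C}(1)=3>2g-2=0$, Riemann--Roch gives $h^{0}(C,\mathcal{O}_{C}(1))=3$; hence the restriction map $H^{0}(\mathbb{P}^{2},\mathcal{O}(1))\to H^{0}(C,\mathcal{O}_{C}(1))$ is an injection of three-dimensional spaces, so an isomorphism, and the inclusion $C\hookrightarrow\mathbb{P}^{2}$ is the morphism attached to the complete linear system $|\mathcal{O}_{C}(1)|$. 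I would also recall that, $C$ being smooth and projective, every $\sigma\in G_{P}$ is an honest automorphism of $C$, as noted in Section 2.

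The key step I would then carry out is as follows. By Fact~\ref{Galois covering}(1) (equivalently, by the definition of $G_{P}$ in Section 2), $\pi_{P}(\sigma(R))=\pi_{P}(R)$ for all $R\in C$, so $\pi_{P}\circ\sigma=\pi_{P}$ as morphisms $C\to\mathbb{P}^{1}$. A fibre of $\pi_{P}$ is the divisor cut on $C$ by a line through $P$ (here one uses $P\notin C$), hence a member of $|\mathcal{O}_{C}(1)|$, so $\pi_{P}^{*}\mathcal{O}_{\mathbb{P}^{1}}(1)\cong\mathcal{O}_{C}(1)$. Therefore
\[
\sigma^{*}\mathcal{O}_{C}(1)\cong\sigma^{*}\pi_{P}^{*}\mathcal{O}_{\mathbb{P}^{1}}(1)=(\pi_{P}\circ\sigma)^{*}\mathcal{O}_{\mathbb{P}^{1}}(1)=\pi_{P}^{*}\mathcal{O}_{\mathbb{P}^{1}}(1)\cong\mathcal{O}_{C}(1).
\]
An isomorphism $\mathcal{O}_{C}(1)\cong\sigma^{*}\mathcal{O}_{C}(1)$ induces a linear automorphism of the three-dimensional space $H^{0}(C,\mathcal{O}_{C}(1))$, well defined up to a scalar, with respect to which the morphism $C\to\mathbb{P}^{2}$ given by $|\mathcal{O}_{C}(1)|$ is equivariant; transporting this automorphism through the identification $H^{0}(C,\mathcal{O}_{C}(1))\cong H^{0}(\mathbb{P}^{2},\mathcal{O}(1))$ of linear forms yields an element of $\mathrm{PGL}_{3}$ whose restriction to $C$ is $\sigma$. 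This is the standard mechanism by which an automorphism fixing the class of a very ample line bundle is induced by a projective transformation of the ambient space.

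I do not expect a serious obstacle; the only point requiring care is the last passage, namely making the induced action on $H^{0}(C,\mathcal{O}_{C}(1))$ precise and checking that the scalar ambiguity is immaterial. It seems worth emphasizing in the write-up why the cubic case needs its own argument rather than the references invoked for $d\ge 4$: when $d\ge 4$ one has $K_{C}\cong\mathcal{O}_{C}(d-3)$ and $\mathcal{O}_{C}(1)$ is intrinsically determined by $C$, so \emph{every} automorphism of $C$ preserves its class; when $d=3$ one has $K_{C}\cong\mathcal{O}_{C}$, and a typical automorphism of the elliptic curve $C$ (for instance a translation by a point $x$ with $3x\neq O$, which always exists) does \emph{not} extend to $\mathbb{P}^{2}$. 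Hence it is essential to use that $\sigma$ lies in $G_{P}$, which is exactly what the identity $\pi_{P}\circ\sigma=\pi_{P}$ encodes.
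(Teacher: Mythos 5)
Your proposal is correct and is essentially the paper's own argument, just written out in full: the paper's one-line proof likewise combines completeness of the hyperplane linear system on the cubic with the fact (Fact~2(1)) that $\sigma$ preserves the divisors $C\cap l$ for lines $l$ through $P$, i.e.\ $\sigma^{*}\mathcal{O}_{C}(1)\cong\mathcal{O}_{C}(1)$, whence $\sigma$ extends linearly. Your closing remark explaining why the case $d=3$ cannot be handled by the canonical-bundle argument used for $d\ge 4$ is a worthwhile addition, but the core reasoning matches the paper.
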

\begin{proof}
The linear system given by the embedding $C \subset \mathbb P^2$ is complete and $\sigma(C \cap l)=C \cap l$ if $l$ is a line passing through $P$, by Fact \ref{Galois covering}(1). 
\end{proof} 

\begin{lemma} \label{criterion}
Assume that $p\ne 3$ and $P$ is outer Galois. 
Then, we have the following. 
\begin{itemize}
\item[(i)] There exist three inflection points whose tangent lines contain $P$. 
(A point $Q \in C$ is called an inflection point if the intersection multiplicity $I_Q(C, T_QC) \ge 3$.)
\item[(ii)] $C$ is projectively equivalent to a Fermat curve. 
\end{itemize} 
\end{lemma}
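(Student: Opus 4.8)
The plan is to prove (i) by a Riemann--Hurwitz count on $\pi_P$, and then derive (ii) from it together with Lemma~\ref{extendability}.

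For (i): I would use that $\pi_P\colon C\to\mathbb P^1$ is a Galois covering of prime degree $3$, so $G_P$ is cyclic of order $3$, and that it is tame since $p\ne 3$. A smooth plane cubic has genus $1$, so Riemann--Hurwitz gives $\sum_R(e_R-1)=6$; by Fact~\ref{Galois covering}(5) every ramification index divides $3$, so each ramified point $R$ has $e_R=3$ and there are exactly three of them, say $R_1,R_2,R_3\in C$. Since $P\notin C$, Lemma~\ref{index}(2) gives $I_{R_i}(C,\overline{PR_i})=3$; a line meeting a smooth point of $C$ with multiplicity $\ge 2$ is the tangent line there, so $\overline{PR_i}=T_{R_i}C$ and hence $I_{R_i}(C,T_{R_i}C)=3$. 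This produces the three inflection points of (i), with $P$ on each of their tangent lines $\overline{PR_1},\overline{PR_2},\overline{PR_3}$ (which are moreover distinct, by B\'ezout).

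For (ii), the decisive point is that a generator $\tau$ of $G_P$ is a linear map with four fixed points. Indeed, by Fact~\ref{Galois covering}(3) the stabilizer $G_P(R_i)$ has order $e_{R_i}=3=|G_P|$, so $\tau$ fixes each $R_i$, and it also fixes $P$; by Lemma~\ref{extendability}, $\tau$ is the restriction of a linear automorphism of $\mathbb P^2$ of order $3$. Since $p\ne 3$, such an automorphism is diagonalizable over $K$ with eigenvalues among the three distinct cube roots of unity, so up to ${\rm PGL}_3(K)$ it is either $\tau={\rm diag}(1,1,\omega)$ (a homology, with a pointwise fixed line and one further fixed point) or $\tau={\rm diag}(1,\omega,\omega^2)$ (with exactly three isolated fixed points), where $\omega$ is a primitive cube root of unity. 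The second case is impossible, because $R_1,R_2,R_3,P$ are four distinct fixed points of $\tau$ (the $R_i$ lie on $C$, whereas $P$ does not). Hence, in suitable coordinates, $\tau={\rm diag}(1,1,\omega)$.

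It then remains to normalize the defining equation $F$. The invariance $\tau(C)=C$ forces $F(X,Y,\omega Z)=cF(X,Y,Z)$ for a constant $c$, i.e. all monomials of $F$ share the same exponent of $Z$ modulo $3$; in degree $3$ the only shape compatible with smoothness is $F=F_0(X,Y)+\mu Z^3$ with $\mu\ne 0$ and $F_0$ a binary cubic having three distinct roots (the alternatives $Z\cdot(\text{binary quadratic})$, $Z^2\cdot(\text{linear form})$ and $F_0(X,Y)$ alone all give reducible, hence singular, curves). Since ${\rm PGL}_2(K)$ is sharply $3$-transitive on $\mathbb P^1$, a linear change in $X,Y$ makes $F_0$ proportional to $X^3+Y^3$, and a rescaling of $Z$ (using that $K$ is algebraically closed) then makes $F$ proportional to $X^3+Y^3+Z^3$; this proves (ii). I expect the fixed-point count that eliminates $\tau={\rm diag}(1,\omega,\omega^2)$ to be the only subtle step --- once $\tau$ is known to be a homology, the rest is routine normalization.
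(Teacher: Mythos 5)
Your proof is correct, but it reaches the normal form by a genuinely different route and in the reverse logical order from the paper. The paper does not prove (i) first: it starts from the explicit matrix shape of a generator $\tau\in G_P$ coming from the Section~2 structure theory (an upper-triangular matrix with eigenvalues $\zeta,1,1$), exhibits a concrete conjugating matrix $B$ to diagonalize it, concludes immediately that $C$ is given by $X^3+G(Y,Z)=0$, and only then reads off the three inflection points from the three linear factors of $G$ via B\'ezout; part (ii) follows, as in your argument, from the $3$-transitivity of ${\rm PGL}_2$ on the roots of $G$. You instead prove (i) directly by a tame Riemann--Hurwitz count on the degree-$3$ covering $\pi_P$ (correct: genus $1$, different exponent $6$, each $e_R\in\{1,3\}$, hence exactly three total ramification points, which are flexes with tangent through $P$), and then use these three fixed points together with $P$ to exclude the conjugacy class ${\rm diag}(1,\omega,\omega^2)$ and force $\tau$ to be a homology. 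This costs you the extra fixed-point-counting step, which the paper gets for free because the eigenvalues $\zeta,1,1$ are already known from the matrix form of elements of $G_P$; in exchange, your proof of (i) is independent of the normal form and your derivation of the normal form is more self-contained (it does not invoke the Section~2 computation beyond Lemma~\ref{extendability}). All the individual steps check out, including the smoothness argument forcing $F=F_0(X,Y)+\mu Z^3$ with $F_0$ having distinct roots.
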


\begin{proof} 
Assume that $P=(1:0:0)$ is outer Galois. 
It follows from Lemma \ref{extendability} that a generator $\tau \in G_P$ is given by $\sigma^*z=\zeta z+ay+b$ and $\sigma^*y=y$, where $a, b \in K$ and $\zeta$ is a primitive cubic root of unity, if $p \ne 3$. 
A matrix $A_{\tau}$ representing $\tau$ is diagonalizable, in fact, if we take $$B=\begin{pmatrix} 1 & a & b \\
                     0   & 1-\zeta & 0 \\
                     0 & 0 & 1-\zeta 
\end{pmatrix},$$
then $B^{-1}A_{\tau}B$ is a diagonal matrix whose diagonal elements $\zeta, 1, 1$. 
If we take $\phi$ as a linear transformation $B^{-1}$, then $\phi(C)$ is given by $X^3+G(Y, Z)=0$, where $G$ is a homogeneous polynomial of degree three. 
If $aY+bZ$ is an irreducible component, then the line defined by $aY+bZ=0$ intersects a unique point $Q$ of $C$, which is defined by $X=aY+bZ=0$. 
Therefore, it follows from B\'{e}zout's theorem that $Q$ is an inflection point and $T_QC$ is defined by $aY+bZ=0$, which contains $P$. 
We have (i). 

Now we prove (ii). 
By the proof of (i), we may assume that $C$ is defined by $X^3+G(Y, Z)=0$. 
We use the following fact on the automorphism of $\mathbb P^1$: for any triples $(R_1, R_2, R_3)$ and $(S_1, S_2, S_3)$ given by $R_i, S_i \in \mathbb P^1$, there exist a linear transformation $\phi$ of $\mathbb P^1$ such that $\phi(R_i)=S_i$ for any $i$. 
By this fact, we have $G(Y, Z)=a(Y^3+Z^3)$ for some $a \in K$, for a suitable system of coordinates.  
Therefore, $C$ is projectively equivalent to a Fermat curve. 
\end{proof}

\begin{lemma} \label{p=3}
Assume that $p=3$. 
Then, $\delta'(C) \le 1$. 
\end{lemma}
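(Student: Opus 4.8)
The plan is to handle the case $p=3$, $d=3$ (so $l=1$, $e=1$) directly, showing that two distinct outer Galois points cannot coexist. First I would invoke the structural results already established in Section 6 for $l=1$: by Lemma \ref{equation2}, if $P=(1:0:0)$ is outer Galois, then $C$ has defining equation $g(x,y)+h(y)=0$ where $g(x,y)\in K[y][x]$ has only terms of degree a power of $p$ in $x$. Since $\deg C = 3$ and $p=3$, the only possibility for the $x$-part is $g(x,y)=\alpha x^3 + (\text{terms of degree }1\text{ in }x)$ — actually since powers of $3$ not exceeding $3$ are just $1$ and $3$, we get $g(x,y)=\alpha_3 x^3 + \beta(y)x$ for some $\beta(y)\in K[y]$ of degree at most $2$, plus $h(y)$ absorbing the rest. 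So $C: \alpha_3 x^3 + \beta(y)x + h(y)=0$ with $\alpha_3\neq 0$ (otherwise $C$ is not of degree $3$ in a way compatible with $P$ being a genuine projection point of degree $3$).

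Next I would assume $\delta'(C)\ge 2$ and derive a contradiction. Take $P_2\in\mathbb P^2\setminus(C\cup\{P\})$ a second outer Galois point; as in Section 6, choose coordinates so $P_2=(0:1:0)$ and $\overline{PP_2}=L_\infty$. By Lemma \ref{l=1} (which applies since $l=1$), we get $v=e=1$, meaning ${\rm Ker}\, r_P = G_P$, i.e. every $\sigma\in G_P$ fixes $P_2=(0:1:0)$; equivalently $g$ has no term of degree exactly one in $x$ with positive $y$-degree, so $\beta(y)$ is a constant, $\beta(y)=\beta_1\in K$. Thus $C: \alpha_3 x^3 + \beta_1 x + h(y)=0$ where $h(y)\in K[y]$ has degree exactly $3$ (to keep $C$ a curve of degree $3$). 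Symmetrically, applying the same reasoning with the roles of $P$ and $P_2$ swapped — and using that $P_2$ is also outer Galois with the analogous normal form in the variable $y$ — forces $h(y)=\alpha_3' y^3 + \beta_1' y + \mu$ for constants. The remark at the end of Section 6 already observes that such a curve $g(x)+h(y)=0$ with $g,h$ having only prime-power-degree terms is singular; I would carry out this short check: the homogenization $F = \alpha_3 X^3 + \beta_1 XZ^2 + \alpha_3' Y^3 + \beta_1' YZ^2 + \mu Z^3$ has $F_X = \beta_1 Z^2$, $F_Y = \beta_1' Z^2$, $F_Z = 2\beta_1 XZ + 2\beta_1' YZ$ (in characteristic $3$, $3X^3$-type terms drop out), and these all vanish identically on $Z=0$, so every point of $C\cap L_\infty$ is singular — contradicting smoothness of $C$.

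Finally I would conclude: the assumption $\delta'(C)\ge 2$ is untenable in characteristic $3$ for a smooth cubic, hence $\delta'(C)\le 1$. This matches the statement, and it is consistent with part (ii) of Proposition \ref{cubic}, which also asserts that $\delta'(C)\ge 2$ cannot occur (so the Fermat-curve alternative there is realized only via $\delta'(C)=1$ when $p=3$; indeed $d-1=2$ is a power of $p$ only if $p=2$, which is why $p=3$ gives at most $1$).

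The main obstacle I anticipate is making fully rigorous the reduction to the normal form $\alpha_3 x^3 + \beta_1 x + h(y)=0$ — in particular, being careful that $\alpha_3\neq 0$ (the leading behavior in $x$ must genuinely be cubic because $\pi_P$ has degree $3$) and that the symmetric application of Lemma \ref{l=1} in the variable $y$ is legitimate, which requires checking that $P_2=(0:1:0)$ being outer Galois puts the equation of $C$ into the matching additive normal form in $y$ after possibly adjusting coordinates that fix both $P$ and $P_2$. Once the normal form is pinned down, the singularity computation is a one-line verification in characteristic $3$.
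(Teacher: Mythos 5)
Your proposal is correct and follows essentially the same route as the paper, whose proof of this lemma simply states that the Section~6 argument for $d=p^e$ (i.e.\ Lemma \ref{equation2}, Lemma \ref{l=1}, and the resulting singularity of $g(x)+h(y)=0$) carries over to the cubic case; you have just written that argument out explicitly for $d=p=3$. The only point worth flagging is that Lemmas \ref{equation2} and \ref{l=1} rest on elements of $G_P$ extending to linear transformations, which for $d=3$ is not automatic and is exactly what Lemma \ref{extendability} supplies — you should cite it when importing the Section~6 machinery.
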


\begin{proof}
The argument in the previous section for the case where $d=p^e$ is valid for this case.  
\end{proof}

\begin{proof}[Proof of Proposition \ref{cubic}]
Assume that $\delta'(C) \ge 1$. 
If $p=3$, then we have the assertion by Lemma \ref{p=3}.
Let $p \ne 3$. 
Then, by Lemma \ref{criterion}(ii), $C$ is projectively equivalent to a Fermat curve $X^3+Y^3+Z^3=0$. 
It is not difficult to check that three points $P_1:=(1:0:0)$, $(0:1:0)$ and $(0:0:1)$ are outer Galois. 
Therefore, $\delta'(C) \ge 3$. 
Let $P_2$ be an outer Galois point which is not equal to such three points. 
Then, $P_2$ is contained in a tangent line at an inflection point $Q$, by Lemma \ref{criterion}(i). 
Since the number of inflection points is at most $9$ $(=3d(d-2))$ (see \cite{homma1, homma3, stohr-voloch} for example), $Q$ is contained in a line $X=0$, $Y=0$ or $Z=0$. 
We may assume that $Q$ is contained in a line $X=0$. 
Then, $T_QC$ contains $P_1$ and $P_2$. 
By Lemma \ref{extendability} and considering the action of $G_{P_1}$, we have four Galois points $P_1, P_2, P_3, P_4$ on the line $T_QC$. 
Since there exists a point $Q'$ such that $P_i \in T_{Q'}C$ but $P_j \not\in T_{Q'}C$ if $i \ne j$, we have $G_{P_i} \cap G_{P_j}=\{1\}$ if $i \ne j$, by Lemma \ref{index} and Fact \ref{Galois covering}(3). 
Then, there exist $4\times 2+1=9$ automorphisms of $C$ which fixes $Q$, by Lemma \ref{index} and Fact \ref{Galois covering}(3). 
It follows from a property of an elliptic curve \cite[Theorem 10.1]{silverman} that $p=2$. 
Since Homma's result \cite{homma3} is valid for this case, we have $\delta'(C)=12$. 
\end{proof}

\
\begin{center} {\bf Acknowledgements} \end{center} 
The author was partially supported by Grant-in-Aid for Young Scientists (B) (22740001), MEXT and JSPS, Japan.  
The author thanks Professor Masaaki Homma for helpful discussions. 
The parameterization as in Remark \ref{c=1} appeared during discussions with Homma. 
The author also thanks Professor Hisao Yoshihara for helpful comments.


\begin{thebibliography}{100}
\bibitem{acgh} E. Arbarello, M. Cornalba, P. A. Griffiths and J. Harris, {\it Geometry of algebraic curves}, Vol. I. Grundlehren der Mathematischen Wissenschaften {\bf 267}, Springer-Verlag, New York, 1985. 
\bibitem{chang} H. C. Chang, On plane algebraic curves, Chinese J. Math. {\bf 6} (1978), 185--189. 
\bibitem{fukasawa1} S. Fukasawa, Galois points on quartic curves in characteristic $3$, Nihonkai Math. J. {\bf 17} (2006), 103--110.
\bibitem{fukasawa2} S. Fukasawa, On the number of Galois points for a plane curve in positive characteristic, Comm. Algebra {\bf 36} (2008), 29--36; II, Geom. Dedicata {\bf 127} (2007), 131--137; III, ibid. {\bf 146} (2010), 9--20; IV, preprint, arXiv:1011.3648.
\bibitem{fukasawa3} S. Fukasawa, Galois points for a plane curve in arbitrary characteristic, Proceedings of the IV Iberoamerican conference on complex geometry, Geom. Dedicata {\bf 139} (2009), 211--218. 
\bibitem{fukasawa4} S. Fukasawa, Galois points for a non-reflexive plane curve of low degree, preprint. 
\bibitem{fukasawa5} S. Fukasawa, Galois points for a plane curve in characteristic two, preprint. 
\bibitem{goss} D. Goss, {\it Basic structures of function field arithmetic}, Springer-Verlag, Berlin (1996). 
\bibitem{hefez} A. Hefez, Non-reflexive curves, Compositio Math. {\bf 69} (1989), 3--35.   
\bibitem{hefez-kleiman} A. Hefez and S. Kleiman, Notes on the duality of projective varieties, ``Geometry Today,'' Prog. Math. vol 60, Birkh\"{a}user, Boston, 1985, pp. 143--183.  
\bibitem{homma1} M. Homma, Funny plane curves in characteristic $p>0$, Comm. Algebra {\bf 15} (1987), 1469--1501. 
\bibitem{homma2} M. Homma, A souped-up version of Pardini's theorem and its application to funny curves, Compositio Math. {\bf 71} (1989), 295--302.  
\bibitem{homma3} M. Homma, Galois points for a Hermitian curve, Comm. Algebra {\bf 34} (2006), 4503--4511. 
\bibitem{miura-yoshihara} K. Miura and H. Yoshihara, Field theory for function fields of plane quartic curves, J. Algebra {\bf 226} (2000), 283--294. 
\bibitem{silverman} J. H. Silverman, {\it The Arithmetic of Elliptic Curves}, GTM 106, Springer-Verlag, New York (1986).
\bibitem{stichtenoth} H. Stichtenoth, {\it Algebraic function fields and codes}, Universitext, Springer-Verlag, Berlin (1993). 
\bibitem{stohr-voloch} K. O. St\"{o}hr and J. F. Voloch, Weierstrass points and curves over finite fields, Proc. London Math. Soc. (3) {\bf 52} (1986), 1--19. 
\bibitem{yoshihara} H. Yoshihara, Function field theory of plane curves by dual curves, J. Algebra {\bf 239} (2001), 340--355. 
\end{thebibliography}
\end{document}